\documentclass[11pt]{amsart}
\usepackage{amsfonts}
\usepackage{graphicx}
\usepackage{amsmath}
\usepackage{amsthm}
\usepackage{amssymb}
\usepackage{amscd}
\usepackage{url}
\usepackage{enumitem}
\usepackage[mathscr]{euscript}
\usepackage{mathtools}
\usepackage{tikz-cd}
\usepackage{tikz}

\usepackage[top=1.4in,bottom=1.4in,left=1.4in,right=1.4in]{geometry}

\usepackage[colorlinks,citecolor=blue,linkcolor=red!70!black]{hyperref}
\usepackage[nameinlink]{cleveref}

\usetikzlibrary{arrows,chains,matrix,positioning,scopes}

\newcommand{\R}{\mathbb{R}}
\newcommand{\Z}{\mathbb{Z}}

\newcommand{\N}{\mathbb{N}}


\def\G{{\Gamma}}

\newcommand{\arr}{\rightarrow}


\newcommand{\id}{\text{Id}}

\newcommand{\Aut}{\operatorname{Aut}}

\newcommand{\MCG}{\operatorname{Map}}
\newcommand{\Map}{\operatorname{Map}}
\newcommand{\ExMap}{\operatorname{Map}^\dag}
\newcommand{\PMCG}{\operatorname{PMap}}
\newcommand{\PMap}{\operatorname{PMap}}

\newcommand{\Homeo}{\operatorname{Homeo}}

\newcommand{\Out}{\operatorname{Out}}

\newcommand{\rk}{\operatorname{rk}}

\newcommand{\lilo}{\mathrm{o}}

\newcommand{\PHE}{\operatorname{PHE}}


\let\emptyset\varnothing


\newtheorem{theorem}{Theorem}[section]
\newtheorem{proposition}[theorem]{Proposition}
\newtheorem{corollary}[theorem]{Corollary}
\newtheorem{lemma}[theorem]{Lemma}
\newtheorem{fact}[theorem]{Fact}
\newtheorem{question}[theorem]{Question}
\newtheorem{conjecture}[theorem]{Conjecture}

\theoremstyle{definition}
\newtheorem{definition}[theorem]{Definition}
\newtheorem{example}[theorem]{Example}
\newtheorem{examples}[theorem]{Examples}
\newtheorem{remark}[theorem]{Remark}
\newtheorem{problem}[theorem]{Problem}

\setcounter{tocdepth}{3}

\let\oldtocsection=\tocsection

\let\oldtocsubsection=\tocsubsection

\let\oldtocsubsubsection=\tocsubsubsection
\renewcommand{\tocsection}[2]{\hspace{0em}\oldtocsection{#1}{#2}}
\renewcommand{\tocsubsection}[2]{\hspace{1em}\oldtocsubsection{#1}{#2}}
\renewcommand{\tocsubsubsection}[2]{\hspace{2em}\oldtocsubsubsection{#1}{#2}}

\Crefname{theorem}{Theorem}{Theorems}
\Crefname{proposition}{Proposition}{Propositions}
\Crefname{lemma}{Lemma}{Lemmas}
\Crefname{fact}{Fact}{Facts}
\Crefname{corollary}{Corollary}{Corollaries}
\Crefname{conjecture}{Conjecture}{Conjectures}
\Crefname{claim}{Claim}{Claims}
\Crefname{case}{Case}{Cases}
\Crefname{question}{Question}{Questions}

\title[Surfaces PHE to graphs and their DNB maps]{Surfaces proper homotopy equivalent to graphs and their Dehn--Nielsen--Baer maps} 
\author{Ryan Dickmann, Hannah Hoganson, and Sanghoon Kwak}
\date{\today}

\begin{document}  

\pagebreak
\begin{abstract}
    Motivated by the recent work of Algom-Kfir--Bestvina introducing the mapping class group of an infinite graph via proper homotopy equivalences, we give a necessary and sufficient condition for a surface to be properly homotopy equivalent to a graph. We consider second-countable orientable surfaces that are possibly infinite-type and have noncompact boundary. For surfaces proper homotopy equivalent to graphs, we explore the basic properties of the induced map between the mapping class groups of the surface and the graph. We view this induced map as the basis of a Dehn--Nielsen--Baer analog in the setting of infinite-type surfaces.      
\end{abstract} 
\maketitle
\section{Introduction} \label{sec:Introduction}

Let a surface be a connected, orientable, second-countable, 2-manifold with boundary, and let a graph be a connected 1-dimensional CW complex. It is well known that any noncompact surface or any surface with nonempty boundary is homotopy equivalent to a graph.  However, the basic question of which surfaces are \textit{proper} homotopy equivalent (PHE) to a graph has not been addressed. Recall a continuous map is said to be proper when the preimage of any compact set is compact. 

Intuitively speaking, for a surface to be PHE to a graph it needs to have enough boundary to push off onto a graph in a proper manner. We say a separating simple closed curve $\alpha$ in $S$ is \textbf{boundary isolating} if the quotient space $S/\alpha$ is a wedge of surfaces consisting of a surface with boundary and a noncompact surface without boundary. See \Cref{fig:boundaryisolating} for some examples. We will show that having a boundary isolating curve is an obstruction to being PHE to a graph, and in fact we have the following.

\begin{theorem} \label{thm:maincharacterization}
    A surface $S$ is proper homotopy equivalent to a graph if and only if $S$ has nonempty boundary and no boundary isolating curve.
\end{theorem}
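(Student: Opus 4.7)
The plan is to prove the two implications of \Cref{thm:maincharacterization} separately. For necessity, the main proper-homotopy invariant I will use is compactly supported cohomology $H^*_c$ (equivalently, Borel--Moore homology), which is functorial for proper maps and homotopy-invariant along proper homotopies.

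Forward direction: suppose $S$ is properly homotopy equivalent to a graph $G$. If $\partial S = \varnothing$, then Poincar\'e duality gives $H^2_c(S) \cong H_0(S) = \mathbb{Z}$, whereas $H^2_c(G) = 0$ by dimension, contradicting the proper-homotopy invariance of $H^*_c$. If instead $\alpha$ is boundary isolating, write $S = S_1 \cup_\alpha S_2$ with $S_2$ noncompact and $\partial S_2 = \alpha$. Then an end $e$ of $S$ lying in $S_2$ admits a neighborhood basis $\{U_n\}$ with $U_n \subset \operatorname{int}(S_2)$ open in $S$ and disjoint from $\partial S$, so each $U_n$ is an open orientable $2$-manifold without boundary and therefore $H^2_c(U_n) \neq 0$. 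On the other hand, every end of $G$ admits a neighborhood basis of open subgraphs, each with $H^2_c = 0$. Since a proper homotopy equivalence matches corresponding ends together with the pro-systems of compactly supported cohomology of cofinal end-neighborhoods, we obtain a contradiction.

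Backward direction: suppose $\partial S \neq \varnothing$ and $S$ has no boundary isolating curve. The plan is to exhibit a subgraph $G \subset S$ onto which $S$ properly deformation retracts. I first establish the following combinatorial consequence of the hypothesis: for every compact $K \subset S$, each noncompact component of $\overline{S \setminus K}$ meets $\partial S$. Indeed, if a component $V$ were disjoint from $\partial S$, then $V$ is a noncompact surface with finitely many boundary circles (all in $\partial K$), and a separating simple closed curve $\beta \subset V$ isolating a single end of $V$ becomes a boundary isolating curve of $S$: the end side is noncompact with only $\beta$ as boundary, while the other side contains the nonempty $\partial S$. Using this, I choose a compact exhaustion $S = \bigcup_n K_n$ whose frontiers consist of finite collections of proper arcs with endpoints on $\partial S$ and essential simple closed curves, arranging that every component of each shell $\Sigma_n = \overline{K_{n+1} \setminus K_n}$ has free boundary on $\partial S$. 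Each such compact surface with free boundary deformation retracts onto a spine graph relative to its shared frontier with neighboring shells; choosing the spines $G_n$ compatibly via the classical spine construction for compact surfaces and taking $G = \bigcup_n G_n$, the shell-wise deformation retractions concatenate to a proper deformation retract of $S$ onto $G$, yielding the desired proper homotopy equivalence.

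The main obstacle will be the backward direction, especially the compatibility of spines across shells and the precise choice of exhaustion. Ensuring simultaneously that (i) every shell has free boundary in every component, (ii) the arcs and curves cutting $S$ into shells can be drawn in a locally finite way, and (iii) spines in adjacent shells agree on their common frontier, will require a careful inductive construction---likely via a handle decomposition of $S$ respecting $\partial S$, or an exhaustion by subsurfaces coupled with an inductive choice of cutting arcs. The combinatorial reduction from a boundary-free complement component to a boundary isolating curve also becomes subtle when $V$ has multiple boundary circles or accumulates genus, but the classification of noncompact surfaces should render it tractable.
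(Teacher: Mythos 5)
Your proposal follows essentially the same route as the paper in both directions: the forward implication via Poincar\'e duality on $H^2_c$ for the boundaryless case and, for a boundary isolating curve, via the pro-system of $H^2_c$ of end neighborhoods---which, by excision, is exactly the paper's invariant $H^2_{c_\infty}(S)=\varprojlim_K H^2_c(S,K;\Z)$, whose proper homotopy invariance is the one technical point you assert rather than prove; and the backward implication via an exhaustion in which every shell component meets $\partial S$ (using, as the paper also does, that a boundaryless noncompact complementary component yields a boundary isolating curve), followed by shell-wise deformation retractions rel the cutting frontier. The spine-compatibility issue you flag is automatic once each shell retraction fixes its entire frontier pointwise, which is possible precisely because each shell component retains some free original boundary.
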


The forward direction is proved by comparing the 2nd compactly supported cohomology groups of surfaces and graphs. Since top compactly supported cohomology is trivial for manifolds with boundary, we actually introduce a new proper homotopy invariant we call \textbf{compactly supported cohomology at infinity} (see \Cref{sec:surfacesNotPHEtographs}). For a noncompact manifold, the top compactly supported cohomology at infinity detects ends that are not accumulated by boundary. For the other direction, we build a proper deformation retraction by cutting the surface $S$ into pieces and performing a proper deformation retraction onto a graph in each one. 

Our discussion will largely focus on surfaces of infinite type, meaning surfaces with infinitely generated fundamental group, such as the surfaces in \Cref{fig:boundaryisolating}. We remark that every surface with nonempty boundary and no boundary isolating curve has noncompact boundary. By allowing an infinite-type surface $S$ to have noncompact boundary, we are considering a larger class of surfaces than much of the recent literature on infinite-type surfaces. For example, not only can there be infinitely many compact boundary components accumulating to an end, but there can be noncompact boundary components homeomorphic to $\R$ with complicated behavior. See \Cref{ssec:prelim_noncompactboundary} for more examples and the classification for surfaces with noncompact boundary components.

\begin{figure}[ht!]
    \centering
    \includegraphics[width=0.8\textwidth]{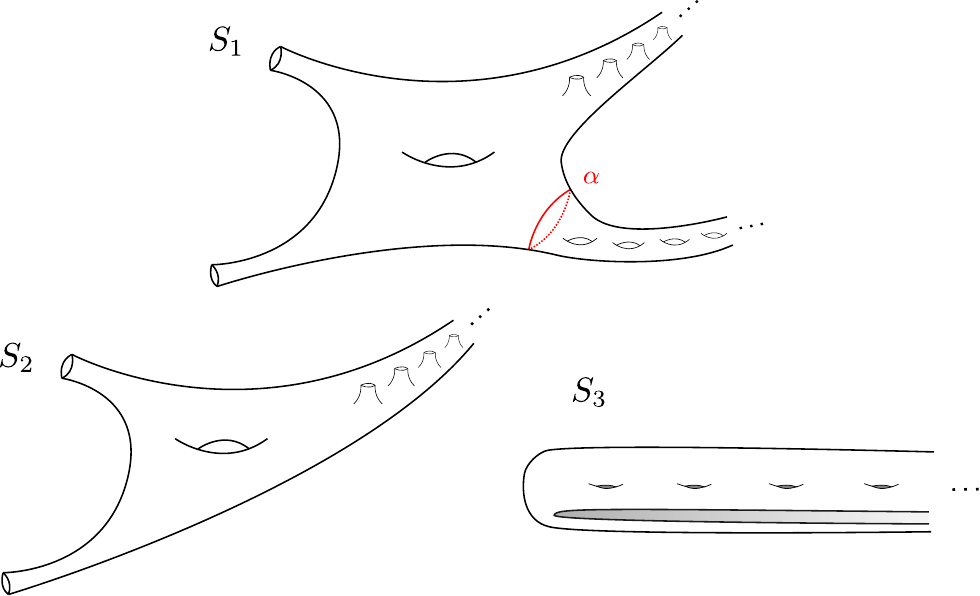}
    \caption{The surface $S_1$ on the top has a boundary isolating curve $\alpha$.
    The two surfaces $S_2$ and $S_3$ on the bottom do not have boundary isolating curves:
    Every noncompact component of the complement of a separating curve contains boundary.}
    \label{fig:boundaryisolating}
\end{figure}

\textbf{Mapping class groups.} The mapping class group of a surface $S$ with nonempty boundary is
\[
    \Map(S) := \Homeo(S, \partial S)/\sim,
\]
where $\Homeo(S, \partial S)$ is the group of homeomorphisms of $S$ pointwise fixing the boundary of $S$, and $f \sim g$ for $f,g \in \Homeo(S,\partial S)$ when $f$ is isotopic to $g$ relative to $\partial S$. Note a homeomorphism fixing the boundary must also preserve orientation. If $\partial S =\emptyset$, then we write $\Homeo^{+}(S)$, the group of orientation-preserving homeomorphisms, in the definition of the mapping class group. When $S$ has noncompact boundary components, its mapping class group was first systematically studied by the first author \cite{dickmann2023mapping}. Recently, he classified the \emph{pure} mapping class groups with automatic continuity including mapping class groups of surfaces with noncompact boundary \cite{dickmann2023automatic}.

Following Algom-Kfir--Bestvina \cite{AB2025} we define the mapping class group of a graph $\G$ as
\[
    \Map(\G) := \PHE(\G)/\sim,
\]
where $\PHE(\G)$ is the group of proper homotopy equivalences of $\G$. Recall a \textit{proper homotopy equivalence} or PHE is a homotopy equivalence $f$ that is proper with a proper inverse $g$ such that $fg$ and $gf$ are properly homotopic to the identity. For $f,g \in \PHE(\G)$, we let $f \sim g$ when $f$ is properly homotopic to $g$. 

The mapping class group for a graph or surface is a topological group with a natural topology. This topology is a quotient of the respective compact-open topology on either the group of PHEs of the graph or the homeomorphism group of the surface. See \cite[Section 4]{AB2025} and \cite{vlamis2019notes} for more details. 
When a proper homotopy equivalence exists between a surface and a graph, it naturally determines a homomorphism between their mapping class groups. We show the following basic properties of the induced map.

\begin{theorem}\label{thm:MainTHM_DNB} 
    Let $S$ be an infinite-type surface properly homotopy equivalent to a graph $\G$. Then the induced homomorphism \[\Theta: \Map(S)\arr \Map(\Gamma)\] is continuous and has a closed image. Moreover, the kernel is topologically generated by Dehn twists about the compact boundary components. 
\end{theorem}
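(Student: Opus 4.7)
The plan is to address the three assertions in turn. Fix a proper homotopy equivalence $\phi \colon S \to \G$ with proper homotopy inverse $\psi \colon \G \to S$ so that $\Theta([f]) = [\phi \circ f \circ \psi]$. Continuity is the simplest piece: a subbasic neighborhood of the identity in $\Map(\G)$ comes from a compact $K \subset \G$ and an open $V \supset K$; since $\psi$ is proper, $\psi(K)$ is compact in $S$, and continuity of $\phi$ yields an open $W \supset \psi(K)$ with $\phi(W) \subset V$, so any $f \in \Homeo(S,\partial S)$ with $f(\psi(K)) \subset W$ descends into the prescribed neighborhood. Continuity at the identity suffices since $\Theta$ is a homomorphism.

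For the kernel, the inclusion $\supseteq$ is straightforward: a Dehn twist about a compact boundary component admits a boundary-push proper homotopy to the identity. For the reverse inclusion, suppose $\Theta([f]) = 1$, so that $f$ is properly homotopic to $\operatorname{id}_S$. Choose an exhaustion $\Sigma_1 \subset \Sigma_2 \subset \cdots$ by compact subsurfaces with incompressible frontier and with $\Sigma_n$ contained in the interior of $\Sigma_{n+1}$, and after isotopy arrange $f(\Sigma_n) = \Sigma_n$. Upgrading the restricted proper homotopy to a homotopy inside $\Sigma_n$ (using $\pi_1$-injectivity of $\Sigma_n$ in $S$), the Epstein--Baer theorem writes $f|_{\Sigma_n}$, up to isotopy rel $\partial \Sigma_n$, as a product of Dehn twists about components of $\partial \Sigma_n$; these split into twists $g_n$ about compact boundary components of $S$ lying in $\Sigma_n$, and twists $t_n$ about cutting curves in the interior. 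Realizing $t_n$ with support in a collar of $\partial \Sigma_n$ disjoint from $\Sigma_{n-1}$ ensures $g_n^{-1} f|_{\Sigma_{n-1}}$ is isotopic to the identity, so the $g_n$ converge to $[f]$ in the compact--open topology, exhibiting $[f]$ as a topological limit of compact-boundary Dehn twists.

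For the closed image, I would characterize $\Theta(\Map(S))$ as the set of $[\gamma] \in \Map(\G)$ for which the self-PHE $\psi \circ \gamma \circ \phi$ of $S$ is properly homotopic to a homeomorphism. If $\Theta([f_n]) = [\gamma_n] \to [\gamma]$, then continuity of conjugation (essentially the first step) gives $\psi \gamma_n \phi \to \psi \gamma \phi$ in $\PHE(S)/{\sim}$, and each $\psi \gamma_n \phi$ is properly homotopic to the homeomorphism $f_n$; closedness of $\Map(S)$ inside $\PHE(S)/{\sim}$, a Baer-type rigidity statement for infinite-type surfaces PHE to graphs, then promotes the limit and yields $[\gamma] \in \Theta(\Map(S))$.

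The main obstacle will be the kernel step, in particular (i) upgrading the proper homotopy $f \simeq \operatorname{id}_S$ to a subsurface homotopy on each $\Sigma_n$ amenable to Epstein--Baer, and (ii) confirming that the extracted compact-boundary twist factors $g_n$ are compatible across $n$ to assemble into a genuine Cauchy sequence in $\Map(S)$. A secondary obstacle is establishing closedness of $\Map(S)$ in $\PHE(S)/{\sim}$, which may require the automatic-continuity machinery of \cite{dickmann2023automatic}.
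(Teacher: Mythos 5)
Your overall architecture (continuity, kernel, closed image) is reasonable, but two of the three parts have genuine gaps, and the closed-image step is the most serious. Your plan there is essentially circular: you reduce closedness of $\Theta(\Map(S))$ to ``closedness of $\Map(S)$ inside $\PHE(S)/{\sim}$,'' but the natural map $\Map(S)\to\PHE(S)/{\sim}$ is not injective (it kills precisely the compact boundary twists), so the statement is not well posed as written, and proving that the \emph{image} of $\Map(S)$ in $\PHE(S)/{\sim}$ is closed is essentially equivalent to the theorem you are trying to prove --- it is not a citable ``Baer-type rigidity statement.'' The paper's route supplies the missing idea: quotient $\Map(S)$ by the kernel (already known to be closed) to get a Polish group $\Map^\star(S)$, show that the induced isomorphism $\Theta^\star$ onto the image is a \emph{topological} isomorphism by matching the neighborhood bases $U_K$ and $U_\Delta$ (using the Alexander methods for surfaces and for graphs), and then invoke the fact that a subgroup of a Polish group is closed if and only if it is Polish in the subspace topology. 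Your continuity argument has a milder version of the same problem: membership in the basic neighborhood $U_\Delta$ of $\Map(\G)$ requires a representative fixing $\Delta$ pointwise with a controlled proper homotopy inverse, not merely control of where a compact set is mapped, so the compact--open estimate you give does not by itself put $\phi f\psi$ into $U_\Delta$.

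For the kernel you take a genuinely different route from the paper (Epstein--Baer on an exhaustion versus the Alexander method for graphs of Algom-Kfir--Bestvina combined with a surface Alexander lemma), and the gaps you flag are real but there is a further one you do not flag. Every surface PHE to a graph has noncompact boundary, so the frontier circles of the exhausting subsurfaces $\Sigma_n$ are not a disjoint union of ``compact boundary components of $S$'' and ``interior cutting curves'': some of them alternate between arcs of $\partial S$ (which $f$ fixes pointwise and about which one may not twist) and arcs in the interior of $S$. Your dichotomy $f|_{\Sigma_n}\simeq g_n t_n$ therefore does not apply as stated; one must work with arcs as well as curves, which is exactly what the paper's surface lemma does (and why degenerate boundary chains appear there: mapping classes fixing all closed curves are topologically generated by compact boundary twists \emph{and} degenerate-chain twists, and one needs the proper-line part of the graph Alexander method to exclude the latter from the kernel). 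Your acknowledged obstacle (i) is also substantive: restricting a free proper homotopy $f\simeq\operatorname{id}_S$ to $\Sigma_n$ does not directly yield a homotopy to the identity \emph{within} $\Sigma_n$; an inner automorphism of $\pi_1(S)$ preserving $\pi_1(\Sigma_n)$ need not restrict to an inner automorphism of $\pi_1(\Sigma_n)$, so this step needs a real argument. The inclusion of compact boundary twists into the kernel is fine, though the paper justifies it via the graph Alexander criterion (such a twist fixes every curve and every proper line up to proper homotopy) rather than a ``boundary push.''
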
 

We remark that as a corollary to \Cref{thm:MainTHM_DNB} we have the mapping class group of a Sliced Loch Ness Monster surface $S_{SLN}$ (for example the bottom-right surface in \Cref{fig:boundaryisolating}) is not residually finite. This follows from the observation that $S_{SLN}$ is PHE to the Loch Ness Monster graph $\G_{LN}$, that $\Map(\G_{LN})$ is not residually finite by \cite[Theorem D]{domat2025generating}, and that $\Theta$ is injective when $S$ has no compact boundary components. This was also shown by the first author \cite{dickmann2023automatic} who proved that $\Map(S_{SLN})$ is automatically continuous. As a corollary, he shows it has no nontrivial homomorphisms to a countable group and hence is not residually finite.

We also remark \Cref{thm:maincharacterization} says any infinite-type surface with compact boundary is not properly homotopic to a graph, so \Cref{thm:MainTHM_DNB} does not apply. This is the class of surfaces much of the recent literature on big mapping class groups has focused on, so we propose the following open-ended question.

\begin{question}\label{q:moreDNB}
    When $S$ is an infinite-type surface with (possibly empty) compact boundary, does there exist any geometrically meaningful homomorphism from $\Map(S)$ to $\Map(\G)$ where $\G$ is a locally finite, infinite graph?
\end{question}

On the other hand, one can ask for a characterization of the image of the map $\Theta$ in \Cref{thm:MainTHM_DNB}.

\begin{question}\label{q:DNBimage}
    Is there an algebraic/geometric description of the image of $\Theta$ with respect to $\G$?
\end{question}

One natural guess for the image is the following. 
Fix $\phi: S \to \G$ a proper homotopy equivalence.
Let $\mathcal{B}$ denote the set of properly immersed oriented curves $S^1 \to \G$ and immersed oriented lines $\R \to \G$ in $\G$ that are images of the components of $\partial S$ via $\phi$. Denote by $\Map(\G, \mathcal{B}) < \Map(\G)$ the subgroup of mapping classes that fix each line or curve in $\mathcal{B}$ and its orientation up to homotopy. 

\begin{conjecture}\label{conj:mainimage}
 The image of $\Theta$ is $\Map(\G, \mathcal{B})$.
\end{conjecture}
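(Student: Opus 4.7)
The plan is to prove the two containments separately. The easy direction, $\mathrm{Im}(\Theta) \subseteq \Map(\G,\mathcal{B})$, follows by direct computation: given $f \in \Homeo(S,\partial S)$ and a boundary parametrization $c: S^1 \to \partial S$ or $c: \R \to \partial S$ whose $\phi$-image is $\beta \in \mathcal{B}$, the identity $f \circ c = c$ holds because $f$ fixes $\partial S$ pointwise. Picking a proper homotopy inverse $\psi$ to $\phi$, the composition $(\phi \circ f \circ \psi) \circ \beta = \phi \circ f \circ (\psi \circ \phi) \circ c$ is then properly homotopic to $\phi \circ f \circ c = \phi \circ c = \beta$ via the proper homotopy from $\psi \circ \phi$ to $\id_S$; the orientation is preserved because $f$ is orientation-preserving and restricts to the identity on $c$. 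Hence $\Theta([f]) \in \Map(\G,\mathcal{B})$.

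For the reverse containment, fix $h \in \Map(\G,\mathcal{B})$ represented by a PHE $g: \G \to \G$, and aim to build $F \in \Homeo(S,\partial S)$ satisfying $\phi \circ F \simeq g \circ \phi$ properly. The strategy is to exploit the ribbon-graph picture of $S$ furnished by the constructive proof of \Cref{thm:maincharacterization}: one may arrange $\G$ to sit properly in $S$ as a spine with $\phi$ the resulting proper deformation retraction, so that $S$ is a thickening of $\G$ whose combinatorial type is encoded by $\mathcal{B}$. Exhaust $\G$ by finite connected subgraphs $\G_n$ with $g(\G_n) \subset \G_{n+1}$, and take regular neighborhoods $N_n \subset S$. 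On each $N_n$, which is a compact bordered surface, the finite-type change-of-coordinates principle, applied using that $g$ preserves the finitely many oriented traces in $\mathcal{B} \cap \G_n$ up to proper homotopy, produces a homeomorphism $F_n$ realizing $g|_{\G_n}$ rel $\partial N_n \cap \partial S$. One then extends across $S \setminus N_n$ and assembles an $F$ on all of $S$.

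The main obstacle is this assembly and the passage to the limit. At each stage one must match the cyclic or linear order in which noncompact boundary arcs enter the vertices of $\G_n$ and must ensure that the $F_n$ agree on $N_{n-1}$ rel boundary; globally one must control the non-boundary-accumulated ends, which are detected by the compactly-supported-cohomology-at-infinity invariant introduced in the proof of \Cref{thm:maincharacterization}. The hypothesis that $g$ fixes each element of $\mathcal{B}$ up to proper homotopy with orientation is precisely what makes the two ribbon-graph structures $(\G,\mathcal{B})$ and $(\G,g(\mathcal{B}))$ yield homeomorphic surface thickenings rel boundary, so the classification of surfaces with noncompact boundary recalled in \Cref{ssec:prelim_noncompactboundary} can be applied to the complementary pieces to compatibly extend each $F_n$. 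Once a limiting homeomorphism $F$ is produced and isotoped to fix $\partial S$ pointwise, the equality $\Theta([F]) = h$ is verified on each $\G_n$, and hence on $\G$, up to proper homotopy.
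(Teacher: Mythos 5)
This statement is a \emph{conjecture} that the paper deliberately leaves open: the authors prove only the containment $\mathrm{Im}(\Theta) \subseteq \Map(\G,\mathcal{B})$ (\Cref{prop:imgfix}), the case where $\G$ is a finite graph with finitely many rays attached (\Cref{prop:image_finiterays}), and the equality after restricting to $\overline{\Map_c(S)}$ and $\overline{\Map_c(\G)}$ (\Cref{thm:image_compactsupp}). Your first paragraph is essentially the paper's proof of \Cref{prop:imgfix} and is fine. The reverse containment, however, is not proved by your proposal: you set up the exhaustion $\G_n$, invoke the compact DNB theorem on regular neighborhoods $N_n$ to get local realizations $F_n$, and then explicitly name ``the assembly and the passage to the limit'' as the main obstacle without resolving it. Naming the obstacle is not the same as overcoming it; the coherence of the $F_n$ rel the cutting arcs, the properness of a limiting map, and the fact that $g|_{\G_n}$ need not fix the curves and arcs of $\partial N_n$ that are \emph{interior} to $S$ (only the traces of $\mathcal{B}$ are controlled by hypothesis) are precisely the points where the paper's own partial arguments require the extra hypothesis that $g$ be a limit of compactly supported elements.

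There is also a concrete reason the sketch cannot be completed as written without addressing a further obstruction: \Cref{prop:doubleflux} shows that $\Theta$ doubles flux, so every element of $\mathrm{Im}(\Theta)$ lies in the even flux subgroup of $\PMap(\G)$. Any proof of $\Map(\G,\mathcal{B}) \subseteq \mathrm{Im}(\Theta)$ must therefore show, in particular, that every element of $\Map(\G,\mathcal{B})$ has even flux. Your argument never engages with flux at all, and the local-to-global assembly is exactly where an odd-flux phenomenon would derail it: a PHE of $\G$ that shifts loops by one step along an end accumulated by loops could conceivably preserve $\mathcal{B}$ while having no homeomorphism preimage, since a handle shift on $S$ maps to a flux-two loop shift on $\G$. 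Until this is ruled out (or the conjecture is reformulated to intersect with the even flux subgroup), the reverse containment remains open, which is consistent with the authors presenting it as a conjecture rather than a theorem.
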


In \Cref{sec:image}, we verify that this conjecture holds if one restricts themselves to a subgroup of the domain and image where elements are the limit of compactly supported elements. We also discuss another possible restriction on the image when one considers elements that are not the limit of compactly supported maps. In particular, elements in the image of $\Theta$ have even flux in the sense of Domat--Hoganson--Kwak \cite{DHK2023,domat2025generating}.

\textbf{Extended mapping class groups.}
We note that the image of $\Theta$ lies in $\PMap(\G)$, the pure mapping class group of $\G$, the subgroup of elements fixing ends pointwise. This is because $\Map(S) = \PMap(S)$ when $S$ has no boundary isolating curve as every end of $S$ is accumulated by boundary, and elements in $\Map(S)$ fix boundary components pointwise. Since $\Theta$ is an equivariant homomorphism under the action on the space of ends of $S$ and $\G$, the $\Theta$-image of $\PMap(S)=\Map(S)$ lies in $\PMap(\G)$. 

\begin{figure}[ht!]
    \centering
    \includegraphics[width=0.75\textwidth]{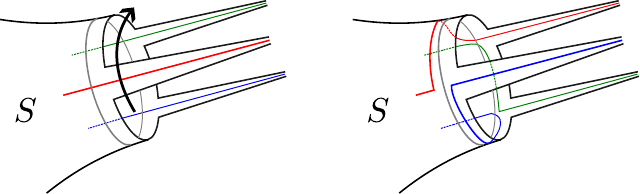}
    \caption{$\frac{1}{3}$-Dehn twist}
    \label{fig:fractional_DT_simpler}
\end{figure}

To achieve a map that hits more elements of the mapping class group $\Map(\G)$, we consider the \textbf{(boundary) extended mapping class group} $\ExMap(S)$ of a surface $S$ defined as:
\[
    \ExMap(S) := \Homeo(S)/\sim,
\]
where $\Homeo(S)$ is the group of homeomorphisms on $S$ (not necessarily fixing boundary pointwise or orientation preserving), and for $f,g \in \Homeo(S)$, we let $f \sim g$ when $f$ is isotopic to $g$ (not necessarily rel boundary). See \Cref{ex: dihedral} for some cases where replacing isotopy with homotopy or proper homotopy can change the isomorphism type of the extending mapping class group. 
When $\partial S = \emptyset$, then $\ExMap(S)$ coincides with the (usual) extended mapping class group $\Map^\pm(S)$, introduced in, e.g., \cite[Section 8]{primer}.

Dehn twists about compact boundary components are isotopic to the identity in $\ExMap(S)$ as we are allowed to `undo the twist' by an isotopy along the boundary. For surfaces with noncompact boundaries, we have some new interesting maps in the extended mapping class group. Consider a surface formed by deleting a set of points from a compact boundary component. We will refer to the union of the resulting boundary components as a \textit{degenerate chain}. For more detail, see \Cref{exa:diskminusboundarypoints} and the discussion thereafter. Now we have Dehn twists about degenerate chains, and as shown in \Cref{fig:fractional_DT_simpler}, we can even have \emph{fractional Dehn twists} about them, which cyclically permute the ends in the degenerate chain. Disarlo \cite{disarlo2015combinatorial} also studied surfaces with points removed from the boundary (called \emph{ciliated surfaces} $(S,\mathbb{P})$ where $\mathbb{P}$ is a finite set of marked points on the boundary of $S$) and made the observation that the extended mapping class group contains fractional Dehn twists. 

In this context, a proper homotopy equivalence $S \to \G$ still induces a well-defined homomorphism between mapping class groups, and we obtain a similar theorem to \Cref{thm:MainTHM_DNB} for the extended mapping class group as follows.
\begin{theorem}\label{thm:MainTHM_DNB_EX}
    Let $S$ be an infinite-type surface properly homotopy equivalent to a graph. Then the induced homomorphism $\widetilde{\Theta}$ from the proper homotopy equivalence
    \[\widetilde{\Theta}: \ExMap(S)\arr \Map(\Gamma)\] is continuous and has a closed image. Moreover, it is injective.
\end{theorem}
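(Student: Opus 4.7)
The plan is to follow the argument for \Cref{thm:MainTHM_DNB} closely, since continuity and the closed-image claim are largely formal consequences of working with the compact-open topology, while injectivity replaces the kernel description and requires new input. Fix a proper homotopy equivalence $\phi : S \to \G$ with proper homotopy inverse $\psi : \G \to S$. For $f \in \Homeo(S)$ (not required to fix the boundary or preserve orientation), $\phi \circ f \circ \psi \in \PHE(\G)$ is well-defined up to proper homotopy, and isotopic choices of $f$ yield properly homotopic composites, so $\widetilde{\Theta}$ descends to mapping class groups. Continuity then follows because $f \mapsto \phi \circ f \circ \psi$ is continuous in the compact-open topology on $\Homeo(S)$ and $\PHE(\G)$, and this descends to the quotient topologies defining $\ExMap(S)$ and $\Map(\G)$. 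The closed-image argument from the proof of \Cref{thm:MainTHM_DNB} proceeds by realizing a limit in $\Map(\G)$ through a Cauchy-type sequence of representatives; the construction does not use boundary-fixing or orientation-preservation, so it applies to $\widetilde{\Theta}$ essentially unchanged.

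For injectivity, suppose $[f] \in \ker\widetilde{\Theta}$. Then $\phi \circ f \circ \psi$ is properly homotopic to $\mathrm{id}_\G$, and pre- and post-composing with $\psi$ and $\phi$ (using that $\phi \circ \psi$ and $\psi \circ \phi$ are properly homotopic to the appropriate identities) yields that $f$ is properly homotopic to $\mathrm{id}_S$ as a self-map of $S$. The remaining task is to upgrade this proper homotopy to an isotopy in $\ExMap(S)$: that is, to prove that any self-homeomorphism of $S$ properly homotopic to the identity is isotopic to the identity through (not necessarily boundary-fixing) homeomorphisms. This is a Baer--Epstein-type statement tailored to the class of surfaces at hand.

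This last step is the main obstacle, and is where the hypotheses of \Cref{thm:maincharacterization} enter crucially. My plan is to exhaust $S$ by compact subsurfaces $S_n$ such that each boundary component of $S$ is exhausted by its intersection with $S_n$, isotope $f$ so that $f(S_n) = S_n$ for each $n$, and apply the classical finite-type Baer--Epstein theorem on each $S_n$ (with $\partial S_n$ allowed to move) to obtain isotopies from $f|_{S_n}$ to $\mathrm{id}_{S_n}$. The hypothesis that $S$ has no boundary-isolating curve, equivalent by \Cref{thm:maincharacterization} to $S$ being properly homotopy equivalent to a graph, ensures that every end of $S$ is accumulated by boundary; this is exactly what is needed so that the local isotopies can be chosen compatibly and propagated into the ends without topological obstruction, yielding a single global isotopy in $\ExMap(S)$ from $f$ to $\mathrm{id}_S$.
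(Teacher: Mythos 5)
Your reduction of injectivity to the statement ``every self-homeomorphism of $S$ properly homotopic to $\mathrm{id}_S$ is isotopic to $\mathrm{id}_S$'' is a correct reformulation, but the proposed proof of that statement is where the gap lies, and it is the entire content of the theorem. Two concrete problems. First, the statement is \emph{false} for some surfaces with boundary: on the strip $\R\times[0,1]$ the reflection $(x,y)\mapsto(x,1-y)$ is properly homotopic to the identity via $(x,y)\mapsto(x,(1-t)y+t(1-y))$ (each stage is proper), yet it is orientation-reversing and hence not isotopic to the identity. The strip is sporadic and finite-type, so it is excluded by the hypotheses, but this shows your Baer--Epstein principle cannot be ``largely formal'' and must use more than ``every end sees boundary''; your sketch never identifies what actually rules such phenomena out for infinite-type $S$. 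Second, the propagation step is asserted rather than proved: after isotoping $f$ to preserve each $S_n$ and applying finite-type Baer--Epstein on each piece, consecutive local isotopies disagree by mapping classes supported near the frontier of $S_n$, i.e.\ by twists about the cutting curves and arcs. Pushing these discrepancies outward can accumulate an infinite amount of twisting at an end (this is exactly why the non-extended map $\Theta$ has a nontrivial kernel), and when an end is accumulated by compact boundary components the frontier curves of $S_n$ can be interior separating curves about which twists are genuinely nontrivial. You need an argument that the accumulated twisting is trivial in $\ExMap(S)$, and none is given. The genuinely new phenomenon in the extended setting --- Dehn twists and fractional twists about degenerate boundary chains --- never appears in your sketch, yet these are precisely the candidate kernel elements that must be excluded.

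The paper's route avoids all of this by never constructing a global isotopy directly. It applies the Alexander method for graphs (\Cref{lem:AB_Alexander}) to conclude that an element of $\ker\widetilde{\Theta}$ fixes every oriented closed curve and every oriented line to a non-loop end; pulls this back to $S$ (using that $S$ is nonsporadic, which holds here since $S$ is infinite-type, so that distinct boundary components of $S$ map to distinct curves/lines in $\G$); invokes the surface Alexander method of \Cref{lem:twists_regular} to conclude $f$ is a (possibly infinite) product of twists about compact boundary components and degenerate chains; and then observes that the former are isotopically trivial in $\ExMap(S)$ while the latter are detected by their action on lines running to degenerate ends, hence were never in the kernel. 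If you want to salvage your approach, you would essentially have to reprove \Cref{lem:twists_regular} and the degenerate-chain analysis inside your exhaustion argument. (A minor additional point: the paper's closed-image proof is not a Cauchy-sequence argument but shows $\Theta^\star$ is a topological isomorphism onto its image and uses that a subgroup of a Polish group is closed iff it is Polish in the subspace topology; your conclusion there is right but the mechanism is different.)
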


One can also ask \Cref{q:moreDNB,q:DNBimage} about the homomorphism $\widetilde{\Theta}$ in \Cref{thm:MainTHM_DNB_EX}. For the set $\mathcal{B}$, as defined in \Cref{conj:mainimage}, of properly immersed oriented curves and lines in $\G$ coming from $\partial S$, denote by $\Map^\star(\G, \mathcal{B}) < \Map(\G)$ the subgroup of mapping classes that fix the set of lines and curves up to homotopy (possibly permuting classes and reversing orientations). Then we have the following conjecture, analogous to \Cref{conj:mainimage}.

\begin{conjecture}\label{conj:mainimage_EX}
 The image of $\widetilde{\Theta}$ is $\Map^\star(\G, \mathcal{B})$.
\end{conjecture}

\textbf{Dehn--Nielsen--Baer Theory.} We now discuss the motivation for the investigation of the homomorphisms between mapping class groups of surfaces and mapping class groups of graphs. The initial object of interest is the natural homomorphism $\sigma$ from a mapping class group of a surface $S$ to $\Out(\pi_1(S))$ determined by the action of a homeomorphism of $S$ on $\pi_1(S)$, up to changing base point. For closed surfaces possibly with finitely many points removed, the well-known Dehn--Nielsen--Baer theorem describes the relationship as follows. Let $S_{g,p}$ denote the unique surface with $g$ genus and $p$ punctures.

\begin{theorem}[{Dehn--Nielsen--Baer, \cite[Section 8]{primer}}]
\label{thm:classic_DNB}
Let $S$ be a compact closed surface with genus $g\geq 1$. The homomorphism \[\sigma: \ExMap(S)\arr \Out(\pi_1(S))\] is an isomorphism. If $S=S_{g,p}$ with $g \geq 1$ or $p \geq 4$, then $\sigma$ is injective, but not surjective when $p \neq 0$. Moreover, the image of $\sigma$ is the subgroup of outer automorphisms that preserve the set of conjugacy classes in $\pi_1(S)$ of the $2p$ oriented curves bounding a single puncture.
\end{theorem}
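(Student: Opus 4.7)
The plan is to follow the classical Nielsen approach, exploiting the fact that for $g \geq 1$ the universal cover of $S$ (after equipping with a hyperbolic or flat metric) is homeomorphic to $\mathbb{R}^2$ and, in the hyperbolic case, compactifies to a disk whose boundary circle $S^1_\infty$ carries a canonical $\pi_1(S)$-action by homeomorphisms. Both $\ExMap(S)$ and $\Out(\pi_1(S))$ will be realized as subgroups of $\mathrm{Homeo}(S^1_\infty)/\pi_1(S)$, and the comparison of these two incarnations yields the theorem.

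For injectivity, I would start with $f \in \mathrm{Homeo}(S)$ inducing the identity outer automorphism, lift $f$ to $\tilde{f}: \mathbb{H}^2 \to \mathbb{H}^2$, and choose the lift so that $\tilde{f}$ commutes with every deck transformation. Such a $\tilde f$ is a bounded distance from the identity (since it commutes with a cocompact action), hence extends continuously to the identity on $S^1_\infty$. A standard Alexander-trick / fundamental-domain argument, done equivariantly, builds an isotopy from $\tilde{f}$ to $\mathrm{id}$ that descends to an isotopy from $f$ to $\mathrm{id}$ on $S$. This step uses genus $\geq 1$ only to ensure the universal cover is $\mathbb{R}^2$, not $S^2$.

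For surjectivity in the closed case, the core input is that any automorphism $\phi \in \mathrm{Aut}(\pi_1(S))$ can be implemented by a $\phi$-equivariant quasi-isometry $F: \mathbb{H}^2 \to \mathbb{H}^2$, which extends to a homeomorphism $\partial F : S^1_\infty \to S^1_\infty$. I would then build an honest $\phi$-equivariant self-homeomorphism of $\mathbb{H}^2$ by choosing an equivariant ideal triangulation, defining the map on ideal vertices using $\partial F$, and extending by a straight-line homeomorphism on each ideal triangle. Descending to $S$ produces a homeomorphism whose action on $\pi_1(S)$ is $\phi$. For the punctured case $S = S_{g,p}$, any homeomorphism permutes the $p$ punctures and so preserves the set of $2p$ peripheral conjugacy classes (counting orientations); conversely, given $\phi \in \mathrm{Out}(\pi_1(S_{g,p}))$ preserving this set, I would cap off the punctures, apply the closed result to the induced automorphism of $\pi_1(S_g)$, and then use that each peripheral curve is homotopic to a small loop around a puncture to verify the resulting homeomorphism can be chosen to permute the punctures accordingly. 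Injectivity in the punctured case reduces to the closed case once one observes that an element of $\ker \sigma$ must fix each puncture, and the hypothesis $g \geq 1$ or $p \geq 4$ rules out the small sphere cases where peripheral loops fail to detect isotopy classes.

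The main obstacle is the surjectivity step: upgrading the equivariant boundary map $\partial F$ to an actual equivariant homeomorphism of $\mathbb{H}^2$, and verifying that the resulting descended map is a homeomorphism rather than merely a continuous bijection. The triangulation argument requires that $\partial F$ sends the endpoints of lifts of simple closed geodesics to endpoints of lifts of simple closed geodesics in a way compatible with the combinatorics of an ideal triangulation, and the genus $\geq 1$ hypothesis is essential here to guarantee enough room to triangulate.
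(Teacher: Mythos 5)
The paper does not prove this statement; it is quoted verbatim as a classical result from Farb--Margalit, so there is no internal argument to compare against. Your sketch is essentially the standard Nielsen proof that the cited reference gives for the closed hyperbolic case: realize the automorphism by an equivariant quasi-isometry of $\mathbb{H}^2$, extend to $S^1_\infty$, and promote the boundary homeomorphism to an equivariant homeomorphism of the disk. For closed $S_g$ with $g\geq 2$ this outline is correct (modulo the standard point that the straight-line construction gives a \emph{homotopy} to the identity, and one must separately invoke that homotopic homeomorphisms of such surfaces are isotopic, since $\ExMap$ is defined up to isotopy --- a distinction this paper itself is careful about in Example 5.9).

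There are, however, two genuine gaps. First, the genus-one case: the torus admits no hyperbolic metric, so the entire $S^1_\infty$ machinery is unavailable; the statement for $g=1$ requires the separate (easy) argument that $\ExMap(T^2)\cong GL(2,\mathbb{Z})\cong\Out(\mathbb{Z}^2)$ via linear representatives, which your sketch gestures at with ``flat metric'' but never carries out. Second, and more seriously, the punctured case: your reduction ``cap off the punctures and apply the closed result'' fails outright when $g=0$ (capping $S_{0,p}$ yields the sphere, where there is no closed result to apply --- yet $g=0$, $p\geq 4$ is explicitly included in the statement), and even for $g\geq 1$ it loses information. An element of $\Out(\pi_1(S_{g,p}))$ preserving the peripheral classes does descend to $\Out(\pi_1(S_g))$, but the kernel of this descent is nontrivial --- it contains the algebraic shadows of the point-pushing maps --- so realizing the descended automorphism on $S_g$ and then arranging the homeomorphism to permute marked points does not recover the original automorphism of the free group. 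The correct argument either works directly with a complete finite-area hyperbolic structure on the punctured surface, or compares the Birman exact sequence with its algebraic analogue to match the two kernels; your sketch addresses neither, and this is where the actual content of the $p>0$ case lies.
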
 

The conditions on $g$ and $p$ are to remove sporadic cases. For surfaces with boundary, a similar statement holds where we additionally need the conjugacy classes coming from boundary curves to be preserved. Moreover, the kernel of $\sigma$ is no longer trivial, and it consists of Dehn twists about compact boundary components. Note when a compact surface has nonempty boundary, it is homotopy
equivalent to a finite graph $\G$. Thus, its fundamental group is the free group $F_n$ of finite rank $n=2g+p-1$, and \Cref{thm:classic_DNB} gives a homomorphism $\Map(S)$ to $\Out(F_n)$.

We remark that the existence of the map $\sigma$ in \Cref{thm:classic_DNB} is immediate. However, characterizing the image of $\sigma$ (in particular proving surjectivity when $S$ is closed) requires nontrivial work. In comparison with our contribution in \Cref{thm:MainTHM_DNB,thm:MainTHM_DNB_EX}, establishing the maps $\Theta$ and $\widetilde{\Theta}$ themselves already requires a substantial amount of work. These maps are built for the class of surfaces that are PHE to graphs. In some sense, this additional difficulty is inevitable, since we require more information than how each homeomorphism $h$ acts algebraically on the $\pi_1$, as in the classical Dehn--Nielsen--Baer theorem; we do not discard all the topology of $S$.
Moreover, as described in \Cref{q:DNBimage}, a complete characterization of the image of the DNB maps remains open.

For a finite graph $\G$, it is well known that $\Map(\G)$ is naturally isomorphic to $\Out(F_n)$ via the homomorphism determined by the action of a proper homotopy equivalence on $\pi_1(\G)$. Therefore, for compact surfaces with boundary we can view the Dehn--Nielsen--Baer homomorphism $\sigma$ as the homomorphism from $\ExMap(S)$ to $\Map(\G)$ where it agrees with the map $\widetilde{\Theta}$ induced from the PHE from $S$ to $\G$. 

Since $\Out(F_n)$ is extensively studied in analogy with the mapping class groups of closed surfaces, we initiate the investigation of the relationship between mapping class groups of general noncompact surfaces with the mapping class groups of infinite graphs due to Algom-Kfir--Bestvina \cite{AB2025}. Since homotopy classes of oriented curves in a surface $S$ are in one-to-one correspondence with conjugacy classes in $\pi_1(S)$, we can view \Cref{thm:MainTHM_DNB_EX} and \Cref{conj:mainimage_EX} as extensions to \Cref{thm:classic_DNB}. 

    A more classical Dehn--Nielsen--Baer (DNB) map $\Map(S) \to \Out(F_\infty)$ can also be studied, where $S$ is an infinite-type surface and $\pi_1(S)$ is identified with $F_\infty$, the free group with a countable generating set. This map agrees with the composition of $\Theta$ with an analogous natural map $\Map(\Gamma) \to \Out(F_\infty)$ studied by Algom-Kfir--Bestvina \cite{AB2025}. In particular, they show the map is continuous with natural topologies on both groups, but the image is not a closed subgroup for infinite rank graphs. The map to $\Map(\Gamma)$ is potentially more fruitful as, notably, the image is a closed subgroup. An interpretation of this is that the topology on $\Out(F_\infty)$ is in a sense weaker than the topology on $\Map(\Gamma)$. Additionally, $\Out(F_\infty)$ contains no information about the ends of the original space, and there are no immediate geometric objects to study such as lines up to proper homotopy in graphs.

    Having the DNB map as an isomorphism has another interpretation that \emph{every homotopy equivalence $S\to S$ is homotopic to a homeomorphism.} 
    Das in \cite{das2024strong} showed this is not the case when $S$ is a noncompact surface without boundary, by providing an uncountable set of pairwise nonhomeomorphic surfaces that are all homotopy equivalent. Furthermore, Das--Gadgil--Nair in \cite{das2025goldman} characterized when a homotopy equivalence between noncompact surfaces without boundary is homotopic to a homeomorphism using the Goldman bracket.

As a different kind of homomorphism to $\Map(\G)$, we also remark that recently Udall \cite{udall2024the-sphere} constructed a map $\Psi: \Map(M_\G) \to \Map(\G)$ using a retraction $M_\G \to \G$, where $M_\G$ is the double handlebody constructed from the double of the thickening of $\G$. Extending \cite{laudenbach1974topologie,hatcher2004homology,brendle2023the-mapping} to infinite graphs $\G$, he showed that $\Psi$ is surjective and the kernel of the map $\Psi$ is generated by the Dehn twists about core spheres (corresponding to the core subgraph of $\G$, the smallest connected subgraph of $\G$ containing all immersed images of $S^1$). He further showed that the corresponding short exact sequence splits.

\textbf{Outline.} In \Cref{sec:preliminaries}, we cover background for infinite graphs and infinite-type surfaces with boundary, and their classifications with many examples. Though our arguments will not often use the classification of general surfaces due to Brown--Messer \cite{classification}, we give some discussion on the classification since the general case is still largely unknown.

In \Cref{sec:surfacesNotPHEtographs}, we introduce the boundary isolating curve and show that it is an obstruction for a surface to be properly homotopy equivalent to a graph, showing the forward direction of \Cref{thm:maincharacterization}. We introduce compactly supported cohomology at infinity to accomplish this.

In \Cref{sec:surfacesPHEtograph}, we prove the backward direction of \Cref{thm:maincharacterization}, showing that when a surface has boundary but no boundary isolating curve then the surface is properly homotopy equivalent (in fact, there exists a \emph{proper deformation retraction}) to a graph. With this proper homotopy equivalence in hand, in \Cref{sec:DNB} we prove the Dehn--Nielsen--Baer type theorems \Cref{thm:MainTHM_DNB,thm:MainTHM_DNB_EX}.

In \Cref{sec:image}, we give some partial results towards \Cref{q:DNBimage}. In particular, we show that the image of the induced map $\Theta$ is in the even flux subgroup (\Cref{prop:doubleflux}) and that \Cref{conj:mainimage} holds when we restrict to the closure of the compactly supported mapping class group (\Cref{thm:image_compactsupp}).

\section*{Acknowledgements}
We thank Mladen Bestvina for suggesting this project, and sharing numerous helpful insights. We acknowledge George Domat for useful discussions to kick off this project, in particular regarding \Cref{sec:surfacesNotPHEtographs} and \Cref{sec:surfacesPHEtograph}. We are grateful to the referee for the helpful comments and insightful suggestions that improved the quality of our paper.

The first author was supported by NSF DMS--1745583. The second author was
partially supported by NSF DMS--2303365. The third author was partially
supported by NSF DMS--2304774, the KIAS Individual Grant (HP098501) via the June
E Huh Center for Mathematical Challenges at Korea Institute for Advanced Study,
by the New Faculty Startup Fund (700-20250069) at Seoul National University, and
by the Young Scientist Grant (RS-2026-25481514) of the
National Research Foundation of Korea (NRF), funded by the Korea government
(MSIT).

\tableofcontents

\section{Preliminaries} \label{sec:preliminaries}

\subsection{Infinite graphs}

A graph is a connected 1-dimensional CW complex. An \textbf{infinite} graph is a locally finite graph that is infinite as a CW complex. Namely, it has infinitely many 0-cells (vertices) and 1-cells (edges) with every vertex having finite valence. The \textbf{rank} $\rk(\G)$ of a graph $\G$ is the rank of its fundamental group, a free group. The rank itself is a complete homotopy invariant for a finite graph. Indeed, two finite graphs are homotopy equivalent if and only if they have the same rank. However, for an infinite graph, we need more information to appropriately determine their \emph{equivalence}.

A graph is an example of a \textbf{generalized continuum}, a topological space that is locally
compact, locally connected, $\sigma$-compact, and connected Hausdorff. A
generalized continuum $X$ admits a compact exhaustion
$\{K_n\}_{n=1}^\infty$ of $X$ such that $K_n \subset \textrm{Int}(K_{n+1})$. The
\textbf{(Freudenthal) end space} $E(X)$ (due to \cite{freudenthal1931enden}) of a generalized continuum $X$ is the inverse limit
\[
    E(X) := \varprojlim_{K \subset X} \pi_0(X \setminus K),
\]
where $K$ runs over the compact subsets of $X$, and $\pi_0(X \setminus K)$ is
the set of path components of $X \setminus K$. Each element of $E(X)$ is called
a \textbf{end} of $X$. See \cite[Section I.9]{baues2001infinite} for more
details. See also \cite[Section 2.1]{udall2024the-sphere}.

 Now consider a locally finite, infinite graph $\G$ and consider its end space $E(\G)$.
 If the sequence of complementary components corresponding to $e \in E(\G)$ consists of infinite rank components, then we say $e$ is \textbf{accumulated by loops}. We denote by $E_\ell(\G)$ the set of ends accumulated by loops. With the standard inverse limit topology on $E(\G)$, the set $E_\ell(\G)$ is a closed subset of $E(\G)$.

Recall a continuous map is called \textbf{proper} if every compact set has compact preimage.
Ayala--Dominguez--M{\'a}rquez--Quintero \cite{ayala1990proper} classified infinite graphs up to proper homotopy equivalences using their ranks and end spaces. A homeomorphism between pairs of spaces $(A,B) \to (C,D)$ with $A \supset B$ and $C \supset D$ is a homeomorphism $A \to C$ that restricts to a homeomorphism $B \to D$.

\begin{theorem}[{\cite[Theorem 2.7]{ayala1990proper}}] \label{thm:PHEclassification}
    Let $\G$ and $\G'$ be infinite graphs. If $\rk \G = \rk \G'$ and there is a homeomorphism $(E(\G),E_\ell(\G)) \to (E(\G'), E_\ell(\G'))$, then the homeomorphism extends to a proper homotopy equivalence $\G \to \G'$. If $\G$ and $\G'$ are trees, then this extension is unique up to proper homotopy.
\end{theorem}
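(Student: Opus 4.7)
The plan is to construct the proper homotopy equivalence via compatible exhaustions, treating trees first and then reducing the general case to them. Throughout, I will use that the end space of a locally finite graph is compact, totally disconnected, and metrizable (being an inverse limit of finite sets along a countable system), so the clopen partitions induced by finite subgraphs form a cofinal system in the clopen topology of the end space.

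For trees $\G$ and $\G'$ (automatically of rank $0$ with $E_\ell = \emptyset$) and a homeomorphism $\phi \colon E(\G) \to E(\G')$, I would pick basepoints and construct exhaustions $\G = \bigcup_n K_n$ and $\G' = \bigcup_n K'_n$ by finite subtrees so that the clopen partitions of $E(\G)$ and $E(\G')$ arising from complementary components of $K_n$ and $K'_n$ correspond under $\phi$; mutual refinement of two arbitrary exhaustions via $\phi$ produces such a matching. Then I would build the map inductively, beginning with any homotopy equivalence $K_1 \to K'_1$ of finite trees and extending across each annular region $\overline{K_{n+1} \setminus K_n}$ by sending the unique arc out toward each daughter component of $\G \setminus K_n$ to the unique arc toward the $\phi$-matched daughter component of $\G' \setminus K'_n$. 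The limit map is proper by construction, and a symmetric construction of an inverse together with contractibility of each finite stage shows it is a proper homotopy equivalence.

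For general graphs I would reduce to the tree case via a model graph $\G_{\mathrm{mod}}$ depending only on $(\rk \G, E(\G), E_\ell(\G))$: take a tree with end space $E(\G)$ and attach loops realizing $\rk \G$ so that infinitely many loop basepoints accumulate to each end in $E_\ell(\G)$ and no other end. To show $\G$ is PHE to $\G_{\mathrm{mod}}$, I would choose a spanning tree $T \subset \G$ (which inherits the end space $E(\G)$), view each non-tree edge as a loop based at a single vertex of $T$ up to proper homotopy, and slide these basepoints along $T$ in a manner coordinated with an exhaustion. The definition of $E_\ell$ guarantees that the loops of $\G$ already accumulate precisely on the prescribed ends, so the sliding can be arranged to yield the required accumulation pattern in the model. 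The desired PHE $\G \to \G'$ then follows by composing the two model identifications with the tree PHE together with a loop-by-loop matching compatible with $\phi$. The hardest step will be executing the sliding while maintaining properness and the correct accumulation; this requires careful nesting of the sliding within the exhaustion.

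For uniqueness when $\G$ and $\G'$ are trees, given two PHEs $f, g \colon T \to T'$ extending the same $\phi$, I would build a proper homotopy by geodesic interpolation. After refining the compatible exhaustions if needed, I can arrange that $f(x)$ and $g(x)$ lie in the same component of $T' \setminus K'_n$ whenever $x \notin K_n$; convexity of trees then lets me define $H(x, s)$ as the point at parameter $s$ along the unique geodesic from $f(x)$ to $g(x)$. Properness of $H$ follows because the preimage of any compact subset of $T'$ is contained in $(f^{-1}(N) \cup g^{-1}(N)) \times I$ for a finite subtree $N \subset T'$ large enough to contain all geodesics meeting the compact set.
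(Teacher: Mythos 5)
This statement is quoted from Ayala--Dominguez--M\'arquez--Quintero \cite[Theorem 2.7]{ayala1990proper}; the paper offers no proof of its own, so there is nothing internal to compare your argument against. Judged on its own terms, your architecture is the standard one (and essentially that of the cited reference): handle trees by building $\phi$-compatible exhaustions via mutual refinement of the induced clopen partitions and extending inductively over the finite annular pieces, prove uniqueness for trees by geodesic interpolation, and reduce the general case to a model graph determined by $(\rk\G, E(\G), E_\ell(\G))$. The tree existence argument and the uniqueness argument are essentially complete: the only points to make explicit are that the exhaustions should be saturated (no bounded complementary components, so each daughter component carries at least one end and attaches to the finite stage at a single point), and that in the interpolation step properness follows because, for $x$ deep enough in an end, $f(x)$ and $g(x)$ lie in a common component of the complement of any prescribed finite subtree, so the geodesic between them misses it.

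The genuine gap is in the reduction of $\G$ to its model graph. ``Slide the non-tree edges to loops along the spanning tree, coordinated with an exhaustion'' is exactly the step where the theorem's content lives, and the naive version fails: for the ladder graph (vertices $a_n,b_n$, rails, and rungs $a_nb_n$) with spanning tree the two rails plus the rung $a_0b_0$, every fundamental cycle passes through $a_0b_0$, so homotoping each non-tree edge to a loop along its fundamental cycle is not proper and would produce a non--locally-finite wedge. The correct move is local --- e.g.\ collapse a locally finite spanning forest of finite trees (collapsing the rungs of the ladder yields a chain of bigons, hence the Loch Ness monster graph), or perform proper edge slides that move each loop a uniformly bounded amount per stage of the exhaustion --- and one must simultaneously verify that the resulting loops accumulate exactly on $E_\ell(\G)$. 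You correctly identify this as the hardest step, but flagging it is not the same as doing it; as written the proposal proves the tree case and the uniqueness statement, and reduces the general case to precisely the lemma it does not supply.
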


This classification motivates using proper homotopy equivalence (PHE) as the notion of symmetry of an infinite graph. Indeed, Algom-Kfir--Bestvina \cite{AB2025} introduced the \textbf{mapping class group} of a locally finite infinite graph $\G$ as:
\[
    \Map(\G) := \{\phi: \G \to \G \textrm{ PHE} \}/\{\textrm{proper homotopy}\}.
\]
More precisely, they defined the mapping class group of $\G$ as the group of proper homotopy equivalences that necessarily have a proper homotopy inverse up to proper homotopy. See \cite[Example 4.1]{AB2025} for an example of a homotopy equivalence which itself is proper, but whose homotopy inverse is never proper.
Hence, such an element should not be included in the mapping class group.

A proper homotopy equivalence induces a homeomorphism between their end spaces.
\begin{proposition}[{\cite[Section 2]{porter1995proper}, see also \cite[Section 13.4]{geoghegan2008topological}}]
\label{prop:InducedMapsOnEnds}
    A proper map $f:X \to Y$ between connected, locally compact, Hausdorff spaces induces a continuous map $Ef:E(X) \to E(Y)$ between the end spaces. Moreover, two properly homotopic proper maps $f,g:X \to Y$ induce the same continuous map $Ef = Eg$ on $E(X)$. In particular, a proper homotopy equivalence $f:X \to Y$ induces a homeomorphism $Ef: E(X) \to E(Y)$. Also, $E$ is functorial; $E(f \circ g) = E(f) \circ E(g)$, for proper maps $g: X \to Y$ and $f: Y \to Z$.
\end{proposition}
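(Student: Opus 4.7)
The plan is to build $Ef$ by pulling compact sets back through $f$, verify continuity directly from the inverse-limit topology, and then dispose of the homotopy and PHE statements by functoriality once proper homotopy invariance is established.

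First I would define the map on ends. Given an end $e = (U_K)_K$ of $X$ (a compatible choice of component $U_K$ of $X \setminus K$ for each compact $K \subset X$), and given a compact $L \subset Y$, set $K_L := f^{-1}(L)$, which is compact by properness. Let $V_L$ be the unique component of $Y \setminus L$ containing the connected set $f(U_{K_L})$, and declare $Ef(e) := (V_L)_L$. Compatibility under $L \subseteq L'$ follows because $K_L \subseteq K_{L'}$ forces $U_{K_{L'}} \subseteq U_{K_L}$, and then $V_{L'} \subseteq V_L$ since both components meet the connected set $f(U_{K_{L'}})$. For continuity, $E(Y)$ carries the inverse-limit topology with subbasic opens $\widehat{V} := \{(V'_L)_L \in E(Y) : V'_L = V\}$ for each component $V$ of $Y \setminus L$, and the preimage under $Ef$ is the union of the subbasic opens $\widehat{U}$ of $E(X)$ over components $U$ of $X \setminus K_L$ satisfying $f(U) \subseteq V$, hence is open.

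The heart of the argument is proper homotopy invariance. Given a proper homotopy $H: X \times I \to Y$ between proper maps $f_0, f_1: X \to Y$ and any compact $L \subset Y$, the preimage $H^{-1}(L) \subset X \times I$ is compact, so its projection to $X$ is contained in some compact $K \subset X$. Then for each end $e = (U_{K'})_{K'}$, the product $U_K \times I$ is connected and disjoint from $H^{-1}(L)$, so $H(U_K \times I)$ lies in a single component $V$ of $Y \setminus L$. Since $f_t^{-1}(L) \subseteq K$ for $t \in \{0,1\}$, we have $U_K \subseteq U_{f_t^{-1}(L)}$, so $f_t\bigl(U_{f_t^{-1}(L)}\bigr)$ meets $V$, forcing $(Ef_0)(e)_L = V = (Ef_1)(e)_L$. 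As $L$ was arbitrary, $Ef_0 = Ef_1$.

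Finally, for a PHE $f: X \to Y$ with proper homotopy inverse $g: Y \to X$, functoriality $E(g \circ f) = Eg \circ Ef$ (immediate from the definition) combined with the previous step yields $Eg \circ Ef = E(\mathrm{id}_X) = \mathrm{id}_{E(X)}$ and similarly $Ef \circ Eg = \mathrm{id}_{E(Y)}$; continuity of both sides then upgrades the bijection to a homeomorphism. The main obstacle is the proper homotopy step: one must carefully select a single compact $K$ absorbing the $X$-projection of $H^{-1}(L)$ and verify that $U_K$ is nested inside $U_{f_t^{-1}(L)}$ for both endpoints, so that the common component $V$ hit by $H(U_K \times I)$ is precisely $(Ef_t)(e)_L$ on the nose.
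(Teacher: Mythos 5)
Your argument is correct and is the standard one: the paper does not prove this proposition itself but cites Porter and Geoghegan, and their proofs proceed exactly as you do --- define $Ef$ on the level $L$ by pushing forward the component over $f^{-1}(L)$, get proper homotopy invariance from the compactness of $H^{-1}(L)$ and the connectedness of $U_K\times I$, and deduce the homeomorphism statement from functoriality. All the key verifications (compatibility of the $V_L$, openness of $(Ef)^{-1}(\widehat V)$ as a union of cylinder sets, and the nesting $U_K\subseteq U_{f_t^{-1}(L)}$ used to pin down $(Ef_t)(e)_L$) check out.
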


In fact, \Cref{prop:InducedMapsOnEnds} extends to the end space pair $(E,E_\ell)$; a proper homotopy equivalence with proper inverse on an infinite graph induces a homeomorphism on the pair $(E, E_\ell)$. Also, properly homotopic proper homotopy equivalences induce the same homeomorphism on the pair, so this induces the action of $\Map(\G)$ on $(E(\G), E_\ell(\G))$ by homeomorphism. We call the kernel of this action the \textbf{pure mapping class group} of $\G$, and denote it by $\PMap(\G)$. In short, it fits into the short exact sequence
\[
1 \longrightarrow \PMap(\G) \longrightarrow \Map(\G) \longrightarrow \Homeo(E(\G),E_\ell(\G)) \longrightarrow 1
\]

We record here the pure mapping class group of a \emph{finite type} graph for
the future reference.

\begin{lemma}[{\cite[Corollary 3.9]{AB2025} and \cite[Proposition 2.3]{domat2025generating}}]
  \label{lem:PMapfinitegraph}
  Let $\G$ be a graph with finite rank $n$ and finite end space with $e$ ends.
  Then
  \[
    \PMap(\G) \cong
    \begin{cases}
      \Out(F_n) & \text{if $e=0$,}\\
      F_n^{e-1} \rtimes \Aut(F_n) & \text{if $e>0$.}
    \end{cases}
  \]
\end{lemma}

\subsection{Surfaces with compact boundary}\label{ssec:prelim_compactboundary}
As mentioned in the introduction, a surface is a connected, orientable,
second-countable 2-manifold. We first focus on those with compact boundary. The
end space $E(S)$ of a surface $S$ with its natural topology is defined
analogously to the end space of a graph by considering complementary components
of compact subsurfaces. An end is said to be \textbf{accumulated by genus} when
any complementary component corresponding to the end has infinite genus. The set
of ends accumulated by genus is denoted by $E_\infty(S)$. With the standard
inverse limit topology on $E(S)$, the set $E_\infty(S)$ is a closed subset of
$E(S)$. Note $E_\infty(S) = \emptyset$ exactly when the surface has finite
genus. Due to the work of Ker{\'e}kj{\'a}rt{\'o} \cite{Kerekjarto1923} and
Richards \cite{Richards1963}, the homeomorphism type of a surface with compact
boundary is essentially determined by these two sets as well as its genus and the
number of compact boundary components.

\begin{theorem} [\cite{Kerekjarto1923,Richards1963}]
    Two surfaces with compact boundary are homeomorphic if and only if the pairs $(E(S), E_\infty(S))$ are homeomorphic and they have the same genus and the same number of boundary components.
\end{theorem}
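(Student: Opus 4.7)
The plan is to reduce to the classical Kerékjártó–Richards classification of orientable surfaces without boundary. The forward direction is immediate: any homeomorphism $h\colon S \to S'$ restricts to a bijection between boundary components, and by \Cref{prop:InducedMapsOnEnds} it induces a homeomorphism of end spaces; since the property of being accumulated by genus is detected by whether complementary components of compact subsurfaces have infinite genus, this induced map carries $E_\infty(S)$ homeomorphically onto $E_\infty(S')$, and the total genus is manifestly preserved.

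For the backward direction, I would cap off each compact boundary component (a circle, by compactness and one-dimensionality) of $S$ and $S'$ by attaching a closed $2$-disk, producing boundaryless surfaces $\widehat{S}$ and $\widehat{S}'$. Because each attached disk is compact, the capping operation creates no new ends and preserves the genus, so $(E(\widehat{S}), E_\infty(\widehat{S})) \cong (E(\widehat{S}'), E_\infty(\widehat{S}'))$ and the genera agree. Invoking the Kerékjártó–Richards classification for surfaces without boundary yields a homeomorphism $F\colon \widehat{S} \to \widehat{S}'$. Let $\mathcal{D} \subset \widehat{S}$ and $\mathcal{D}' \subset \widehat{S}'$ denote the collections of attached disks; if one can modify $F$ by composing with an ambient self-homeomorphism of $\widehat{S}'$ so that $F(\mathcal{D}) = \mathcal{D}'$, then restriction to the complement of the interiors of these disks returns the desired homeomorphism $S \to S'$.

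The main obstacle is engineering this bijective matching of disks. The finite case is essentially free: when $|\partial S|$ is finite, any two equinumerous finite collections of disjoint embedded disks in a surface are related by an ambient homeomorphism (the change-of-coordinates principle), so a single adjustment suffices. When $|\partial S|$ is infinite, the disks of $\mathcal{D}$ form a closed discrete subset of the interior of $\widehat{S}$ accumulating only at ends, and one must verify that corresponding ends (under a suitably chosen $EF$) have matching local patterns of accumulating disks. The hypothesis on end pairs together with the freedom to replace $F$ by any other homeomorphism provided by Kerékjártó–Richards should force this compatibility. I would then run a back-and-forth along compatible exhaustions $K_1 \subset K_2 \subset \cdots$ of $\widehat{S}$ and $K_1' \subset K_2' \subset \cdots$ of $\widehat{S}'$: at stage $n$ one applies the finite case to arrange that the finitely many disks of $\mathcal{D}$ in $K_n$ land on disks of $\mathcal{D}'$, while confining all further corrections inside shrinking neighborhoods of the ends so that the sequence of adjusted homeomorphisms converges uniformly on compact sets to a homeomorphism carrying $\mathcal{D}$ onto $\mathcal{D}'$. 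Making this back-and-forth converge to a genuine homeomorphism — the same inductive engine powering the classical Kerékjártó–Richards proof — is the principal technical hurdle.
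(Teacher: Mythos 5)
The paper does not actually prove this statement; it quotes it from Ker{\'e}kj{\'a}rt{\'o} and Richards, so there is no in-text argument to compare against. Your forward direction is fine, and so is your treatment of the case of \emph{finitely many} boundary components: capping each boundary circle with a disk changes neither $(E,E_\infty)$ nor the genus, the boundaryless classification produces $F\colon \widehat S\to \widehat S'$, and the change-of-coordinates principle for a finite disjoint family of embedded disks in a connected surface lets you correct $F$ so that $F(\mathcal D)=\mathcal D'$; restricting then gives $S\cong S'$.

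The genuine gap is in the infinite case, exactly at the sentence claiming that the hypothesis on end pairs, together with the freedom to vary $F$, ``should force'' the local patterns of accumulating disks to match. It does not. The pair $(E(S),E_\infty(S))$ carries no information about which ends are accumulated by boundary components, so no choice of $F$ or of the induced end homeomorphism can be guaranteed to align $\mathcal D$ with $\mathcal D'$ near the ends, and the back-and-forth has nothing to latch onto. Concretely, start from the cylinder $S^1\times\R$ and remove a sequence of disjoint open disks accumulating to one end to get $S$, and a sequence accumulating to both ends to get $S'$: both surfaces have two planar ends, genus $0$, and countably many compact boundary circles, yet they are not homeomorphic. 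This is precisely why the classification with infinitely many compact boundary components (Barros, Tondra; \Cref{thm:classificationCptBdryComps} in the paper) adds the closed subset $E_\partial(S)$ of ends accumulated by boundary to the list of invariants. So the statement should be read as the Ker{\'e}kj{\'a}rt{\'o}--Richards theorem for finitely many boundary components, where your capping argument closes up with no back-and-forth needed; in the infinite regime the literal statement is false without the extra invariant $E_\partial$, so the compatibility you hoped to extract cannot be proved.
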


\subsection{Surfaces with noncompact boundary}
\label{ssec:prelim_noncompactboundary}

First, we introduce some examples that highlight the new phenomena in the noncompact boundary case. 

\begin{example} [Disks with boundary points removed] \label{exa:diskminusboundarypoints}
 Take a disk and remove an embedded closed subset $I$ of the Cantor set from the boundary. In the case that $I$ is homeomorphic to a Cantor set, this gives a surface with countably many noncompact boundary components yet uncountably many ends.
 
 Each of these surfaces can be constructed by taking a small closed neighborhood of an infinite, locally finite, tree properly embedded in $\R^2$.
\begin{figure}[ht!]
    \centering
    \includegraphics[width=0.6\textwidth]{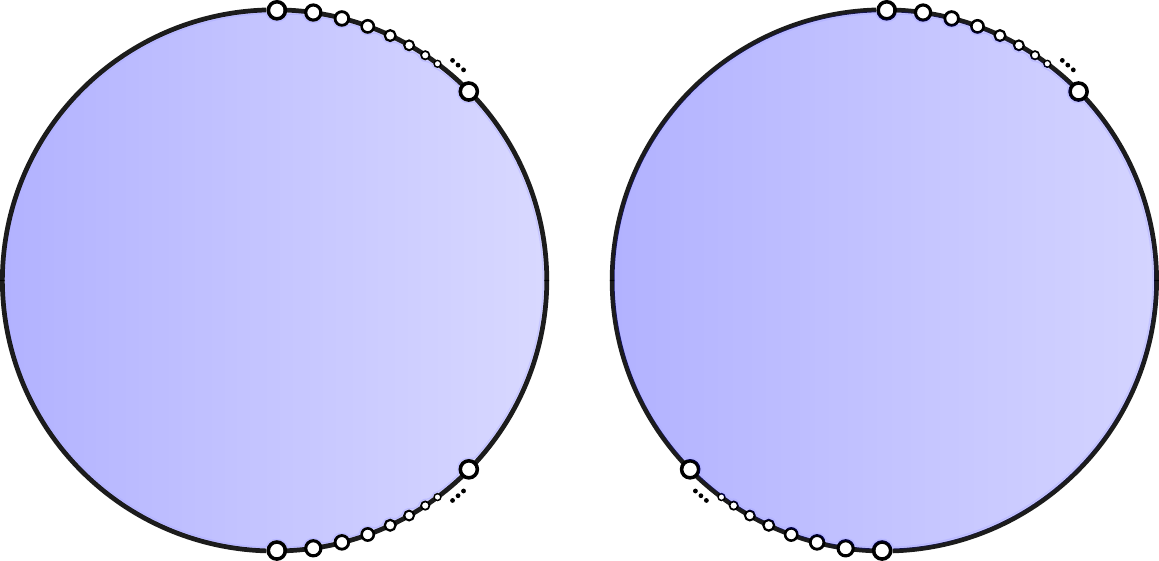}
    \caption{Nonhomeomorphic but proper homotopy equivalent surfaces.}
    \label{fig:nonhomeo2}
\end{figure}
 See \Cref{fig:nonhomeo2} for an example of two disks with boundary points removed. Although they have the same end spaces, they are not homeomorphic as the one has a noncompact boundary component whose ends are both accumulating points of other ends, but the other does not have such a noncompact boundary component. Hence, to distinguish them, we need more information. 
\end{example} 

We refer to the ends from \Cref{exa:diskminusboundarypoints} as \textit{degenerate ends}. More precisely, a \textbf{degenerate end} is an end that has a closed neighborhood $U$ which is homeomorphic to a disk with boundary points removed. 

\begin{example} [Sliced Loch Ness Monsters] Take a surface from
  \Cref{exa:diskminusboundarypoints} with $n \in\{1,2,\dots\} \cup \{\infty\}$
  noncompact boundary components. Attach infinitely many handles going out to
  the single end in the $n=1$ case, and in all other cases attach infinitely
  many handles in a manner that joins the ends into a single end. See
  \Cref{fig:SLMNs}. The resulting surface is the \textbf{$n$-Sliced Loch Ness
    Monster}. Note that when there are infinitely many ends to begin with
  $(n=\infty)$, the handles attached collapse all topology on the end space and
  we get homeomorphic surfaces (the $\infty$-Sliced Loch Ness Monster) no matter
  what infinite subset $I$ of the Cantor set we removed at the beginning; more
  precisely, this is due to 
  the classification theorem \Cref{thm:classification} and the following remark.
\end{example}

\begin{figure}[ht!]
    \centering
    \includegraphics[width=0.7\textwidth]{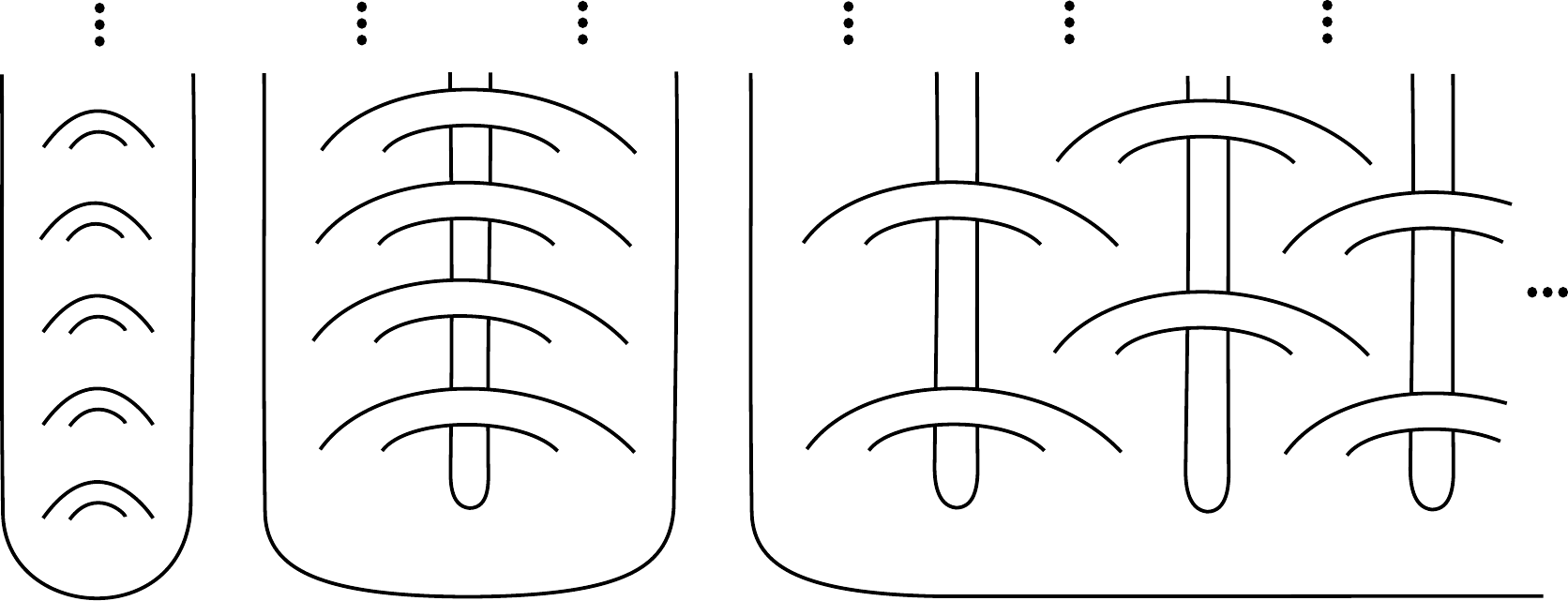}
    \caption{A 1-sliced, a 2-sliced, and an $\infty$-sliced Loch Ness monster.} 
    \label{fig:SLMNs} 
\end{figure}

Now we summarize the ideas for classifying surfaces with noncompact boundary following the work of Brown and Messer \cite{classification}. The previous definitions in \Cref{ssec:prelim_compactboundary} all apply to a general surface without adaptation, but we need more information to capture all the new possibilities. Note compact (or more generally finite-type) exhaustions for a surface $S$ with noncompact boundary components must include subsurfaces whose boundary intersects the noncompact boundary components of $S$ in a union of intervals.

For a surface with infinitely many compact boundary components, we must record the ends accumulated by compact boundary components. We refer to these as \textit{ends accumulated by compact boundary}, and we denote the space of these ends by $E_\partial(S)$. This can be precisely defined in a similar manner to $E_\infty(S)$, the space of ends accumulated by genus. For surfaces with only compact boundary components, Barros and Tondra independently proved the following classification. Here a homeomorphism between triples $h: (A,B,C) \to (D,E,F)$ is a homeomorphism $A \to D$ that restricts to homeomorphisms $h|_B:B \to E$ and $h|_C:C \to F$.
\begin{theorem}[{\cite{barros1974,tondra1979}}]\label{thm:classificationCptBdryComps}
    Two surfaces with only compact boundary components are homeomorphic if and only if the triples $(E(S),E_\infty(S),E_\partial(S))$ are homeomorphic and they have the same genus and the same number of boundary components.
\end{theorem}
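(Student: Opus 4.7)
The plan is to follow the Ker\'ekj\'art\'o--Richards \cite{Richards1963} classification strategy for surfaces without boundary, enhanced to also track the compact boundary circles and their accumulation behavior recorded by $E_\partial(S)$. The forward direction is immediate: any homeomorphism $h: S \to S'$ is proper, so by \Cref{prop:InducedMapsOnEnds} it induces a homeomorphism $E(S) \to E(S')$, and since $h$ preserves local genus and sends compact boundary circles to compact boundary circles, it carries $E_\infty(S)$ onto $E_\infty(S')$ and $E_\partial(S)$ onto $E_\partial(S')$. The same $h$ matches the total genus count and the total number of boundary components.

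For the converse, fix a homeomorphism $\Phi: (E(S), E_\infty(S), E_\partial(S)) \to (E(S'), E_\infty(S'), E_\partial(S'))$ and the equality of the two numerical invariants. I would choose nested compact exhaustions $S_1 \subset S_2 \subset \cdots$ of $S$ and $S_1' \subset S_2' \subset \cdots$ of $S'$ by connected finite-type subsurfaces whose complementary components each have connected frontier (a single simple closed curve) and clopen end set in $E$. Using $\Phi$ together with a standard back-and-forth argument, one refines the two exhaustions in tandem so that at each level $i$ there is a bijection between the complementary components of $S_i$ and those of $S_i'$ respecting (i) the partition of ends induced by $\Phi$, (ii) the genus of each piece, and (iii) the number of compact boundary components lying in each piece.

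Once matched exhaustions are in hand, the classification of compact surfaces with boundary (which depends only on genus and number of boundary circles) yields a homeomorphism from the annular region $\overline{S_{i+1} \setminus S_i}$ to $\overline{S_{i+1}' \setminus S_i'}$ for each $i$ that realizes the chosen correspondence on frontier curves. Choosing these compatibly across shared frontier circles assembles to a global homeomorphism $S \to S'$.

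The main obstacle will be arranging condition (iii). Compact boundary components may accumulate at an end in a way that does not automatically respect any initially chosen exhaustion, so one must show that an end $e \in E_\partial(S)$ admits exhaustion neighborhoods containing any prescribed finite number of compact boundary circles, while any end not in $E_\partial(S)$ admits exhaustion neighborhoods with no compact boundary circles at all. This separation lemma is what allows the back-and-forth procedure to synchronize the boundary-component tallies of matched complementary pieces in finitely many adjustments per stage, and is the essential new ingredient beyond Richards' original argument.
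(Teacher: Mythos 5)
This is a classical result that the paper quotes from Barros and Tondra without reproducing a proof, so there is no in-paper argument to compare against; your sketch follows the standard Ker\'ekj\'art\'o--Richards back-and-forth strategy that those references (and Richards' original paper) carry out, and it is essentially correct. You have correctly isolated the one genuinely new ingredient beyond the boundaryless case, namely the separation lemma for $E_\partial$: an end not accumulated by compact boundary has a complementary neighborhood meeting only finitely many boundary circles, hence (by compactness of those circles) a smaller one meeting none, while an end in $E_\partial$ has neighborhoods containing arbitrarily many, which is exactly what lets the back-and-forth equalize the boundary-circle tallies of matched annular pieces at each stage. The only thing I would ask you to make explicit if you wrote this out in full is the bookkeeping in the finite cases: when the total genus or total number of boundary circles is finite, $E_\infty$ or $E_\partial$ is empty and the equality of the two numerical invariants is what guarantees the counts eventually balance inside the compact cores, whereas in the infinite cases the surplus is always pushed toward an end of the appropriate type.
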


Now we consider a surface with a noncompact boundary component, namely a component homeomorphic to the real line $\R$.
Let $\hat{\partial}S$ be the disjoint union of the noncompact boundary components of a surface $S$. Let $E(\hat{\partial}S)$ be the set of ends of $\hat{\partial}S$. This is just a discrete space with two points associated to each component. Let $v: E(\hat{\partial}S) \to E(S)$ be the function that takes an end of a noncompact boundary component to the corresponding end of the surface. Note it is possible that both ends of a noncompact boundary component get mapped by $v$ to the same end of $S$, as is the case for the 1-Sliced Loch Ness Monster. (See the leftmost surface of \Cref{fig:SLMNs}.)

Denote by $\pi_0(\hat{\partial}S)$ the discrete set of noncompact boundary components of $S$.
Let $e: E(\hat{\partial}S) \to \pi_0(\hat{\partial}S)$ be the map that takes an end of a noncompact boundary component to the corresponding noncompact boundary component.  If we fix an orientation on $S$, then for an arbitrary component $p \in \pi_0(\hat{\partial}S)$ we may distinguish the right and left ends of $e^{-1}(p)$. An \textit{orientation} of $E(\hat{\partial}S)$ is the subset $\mathcal{O} \subset E(\hat{\partial}S)$ that contains exactly the \emph{right} ends for the given orientation. We can collect all of this information in the following diagram:

\begin{equation} \label{diag:surfacediagram}\tag{$\ast$}
\begin{tikzcd}
    \pi_0(\hat{\partial}S) & E(\hat{\partial}S) \lar[swap]{e} \rar{v} & E(S) & E_{\infty}(S) \lar[hook] \\
    & \mathcal{O} \uar[hook] & E_{\partial}(S) \uar[hook] &
\end{tikzcd}
\end{equation}
where the unlabeled hooked arrows are the inclusions. We will refer to this as the \textit{surface diagram} for the surface $S$. Refer to \Cref{fig:surfacediagram} for an example. Two surfaces in \Cref{fig:nonhomeo2} are not homeomorphic, as the one on the left has a noncompact boundary component whose ends are both limit points of the image of $v$, but the one on the right has none. This yields nonhomeomorphic surface diagrams for those two surfaces. Also, see \Cref{fig:nonhomeo} for an example of nonhomeomorphic surfaces distinguished by orientations on ends.

\begin{figure}[ht!]
    \centering
    \includegraphics[width=.8\textwidth]{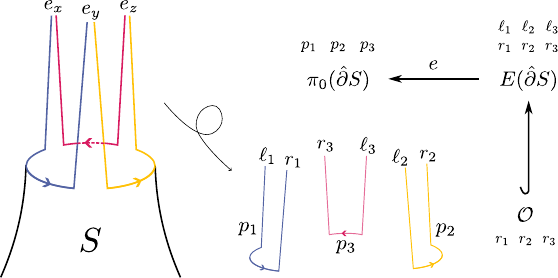}
    \caption{Illustration of a surface diagram. Here $p_1,p_2$ and $p_3$ are noncompact boundary components of $S$ with orientation inherited from $S$. With this orientation, each component $p_i \in \pi_0(\hat{\partial}S)$ has the left end $\ell_i$ and the right $r_i$, among which only $r_1,r_2$ and $r_3$ are contained in $\mathcal{O}$. Under the map $v:E(\hat{\partial}S) \to E(S)$, we have $v(r_3)=v(\ell_1) = e_x$, $v(r_1)=v(\ell_2)=e_y$, and $v(r_2)=v(\ell_3)=e_z$.}
    \label{fig:surfacediagram}
\end{figure}

See \cite{classification} for the construction of a surface from a given \textit{abstract surface diagram}, which is a diagram of the above form consisting of topological spaces and maps satisfying various technical conditions.
Here we say two diagrams are \emph{homeomorphic} if there exist homeomorphisms between the corresponding sets of the diagrams which commute with the arrows in the diagrams. We will not use abstract surface diagrams in this paper, so we leave it to the reader to review this definition if desired.

\begin{figure}[ht!]
    \centering
    \includegraphics[width=0.3\textwidth]{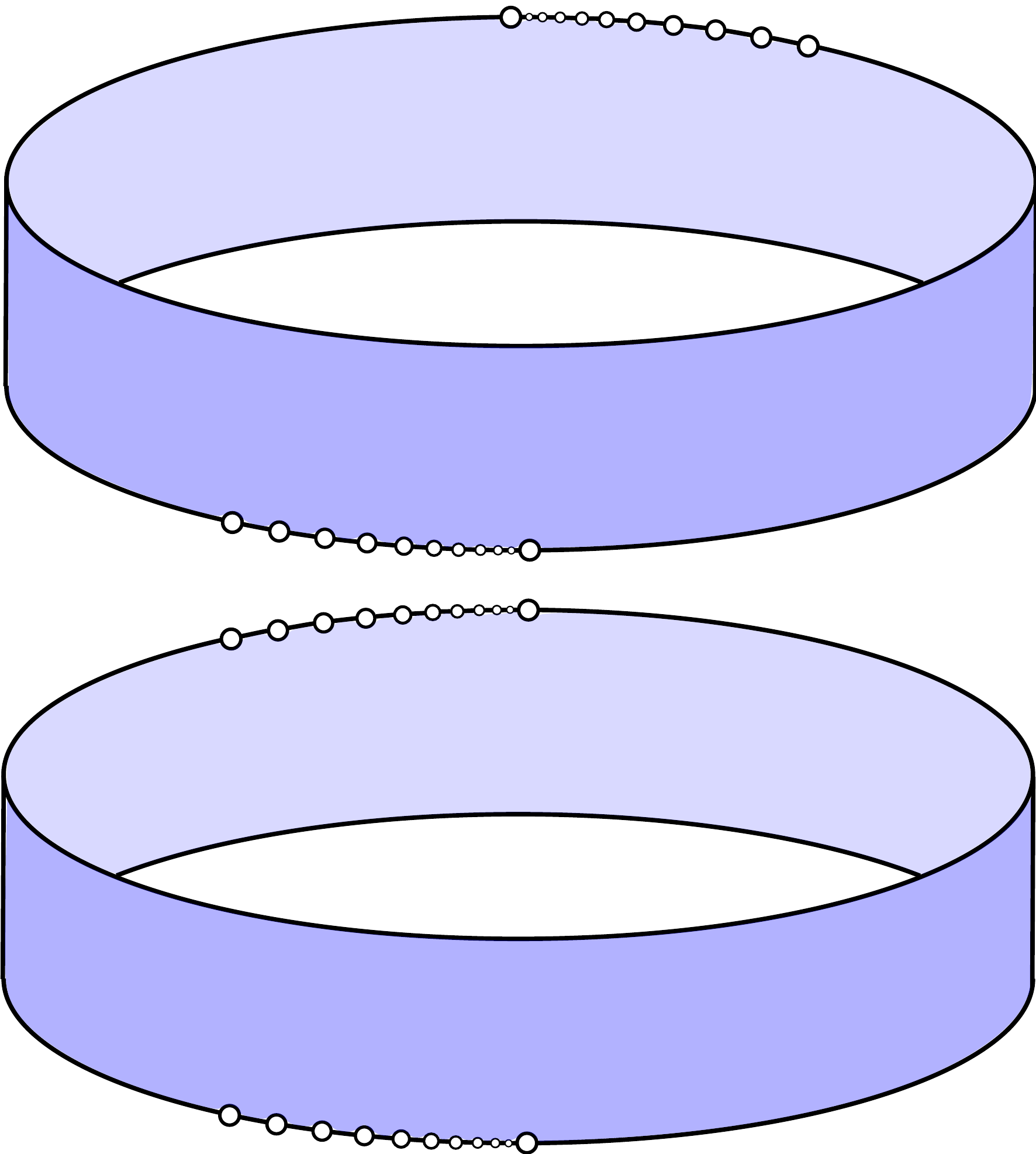}
    \caption{Nonhomeomorphic but properly homotopy equivalent surfaces.}
    \label{fig:nonhomeo}
\end{figure}

\begin{theorem} \label{thm:classification} (Classification of Surfaces, \cite{classification})
   Two surfaces are homeomorphic if and only if they have homeomorphic surface diagrams, the same genus, and the same number of compact boundary components.
\end{theorem}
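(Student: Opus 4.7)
The plan is to follow the standard two-step template for surface classifications, adapted to accommodate the extra data recorded by the surface diagram $(\ast)$. The forward direction is bookkeeping: given a homeomorphism $h: S \to S'$, each ingredient of the diagram is a topological invariant. Compactness is topological, so $h$ induces a homeomorphism $E(S) \to E(S')$ carrying $E_\infty$ and $E_\partial$ to their primed counterparts (both are characterized by properties of neighborhood bases of ends). It sends the noncompact boundary to the noncompact boundary, inducing bijections $\pi_0(\hat\partial S) \to \pi_0(\hat\partial S')$ and $E(\hat\partial S) \to E(\hat\partial S')$ that commute with $e$ and $v$. After fixing orientations that correspond under $h$, the set $\mathcal{O}$ is preserved. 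Genus and the number of compact boundary components are invariants as well.

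For the converse, I would build compatible exhaustions by finite-type pieces and assemble a homeomorphism level by level. The first step is to produce, for any surface $S$ satisfying the hypotheses, a \emph{structured} exhaustion $K_1 \subset K_2 \subset \cdots$ by compact subsurfaces-with-corners whose frontier meets $\partial S$ transversally in finitely many arcs. Each $K_n$ then has boundary consisting of (a) compact boundary components of $S$ contained in $K_n$, (b) arcs cut from noncompact components of $\hat\partial S$, and (c) finitely many interior circles. Every component of $S \smallsetminus K_n$ is a noncompact surface whose surface diagram is read off from the corresponding piece of $(\ast)$: its ends form a clopen subset of $E(S)$, its noncompact boundary components are the pieces of $\hat\partial S$ extending outward through (b), and the orientation $\mathcal{O}$ restricts. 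Such exhaustions can be extracted from a standard compact exhaustion of $S$ by choosing arcs transverse to $\hat\partial S$ and smoothing corners.

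Given two surfaces $S,S'$ with a diagram isomorphism and matching genus and compact boundary counts, the next step is to choose exhaustions $\{K_n\}$ and $\{K_n'\}$ whose combinatorial data correspond under this isomorphism, inductively. At stage $n+1$ I refine the partition of each complementary component, respecting the bijection on ends, the map $v$, the subsets $E_\infty$ and $E_\partial$, and the cyclic/linear arrangement of boundary arcs along each noncompact boundary component (this is where $\mathcal{O}$ is essential). By the classification of compact surfaces-with-boundary, for each $n$ there is a homeomorphism $h_n: K_n \to K_n'$ preserving the labeling of the frontier pieces. Using that any two homeomorphisms of a compact surface-with-boundary agreeing on the boundary are isotopic rel boundary, I would inductively modify $h_{n+1}$ in the collar between $K_n$ and $K_{n+1}$ so that $h_{n+1}|_{K_n} = h_n$. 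The resulting direct limit $h: S \to S'$ is a homeomorphism because the exhaustions are locally finite.

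The main obstacle is the combinatorial matching step, not the geometric one. In the purely compact-boundary case of \Cref{thm:classificationCptBdryComps} one only has to match complementary components as abstract surfaces, but here the exhaustion pieces must be simultaneously matched along arcs of $\hat\partial S$ in a way that is compatible with the end-space homeomorphism, the surjection $v$, the distinguished subsets $E_\infty, E_\partial \subset E(S)$, and the orientation subset $\mathcal{O} \subset E(\hat\partial S)$. Making all of these constraints cohere at every level of the exhaustion, and in particular ensuring that the chosen arcs along each noncompact boundary component accumulate to the correct ends in the correct cyclic order, is the heart of the Brown--Messer argument; once this bookkeeping is in place, the inductive geometric construction and the passage to the limit proceed as in the classical Ker\'ekj\'art\'o--Richards setting.
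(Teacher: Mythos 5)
First, note that the paper does not prove this statement at all: it is quoted from Brown--Messer \cite{classification}, so there is no internal argument to compare yours against. Judged on its own terms, your forward direction is correct and complete. The converse, however, is an outline rather than a proof, and the step you explicitly defer --- ``making all of these constraints cohere at every level of the exhaustion'' --- is not a routine adaptation of the Ker\'ekj\'art\'o--Richards template but is essentially the entire content of Brown--Messer's paper. Concretely, you must show that an abstract isomorphism of surface diagrams can always be realized by compatible structured exhaustions: at each stage the complementary components, their clopen end sets, the restrictions of $v$, $E_\infty$, $E_\partial$, and $\mathcal{O}$, the linear order in which frontier arcs accumulate along each noncompact boundary component, and the distribution of genus and compact boundary components must all be matched simultaneously and consistently with the choices made at earlier stages. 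Your sketch names these constraints but gives no argument that the matching can always be completed, so the proposal has a genuine gap exactly where the theorem is hard.

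Separately, the gluing step rests on a false lemma: two homeomorphisms of a compact surface with boundary that agree on the boundary need not be isotopic rel boundary (a Dehn twist about an essential simple closed curve is a counterexample). What the induction actually needs is different: having arranged combinatorially that $h_{n+1}(K_n)=K_n'$, one does not compare $h_{n+1}|_{K_n}$ with $h_n$ but instead extends $h_n$ directly over each component of $K_{n+1}\setminus \mathrm{int}(K_n)$, using that an orientation-preserving homeomorphism between the frontiers of two homeomorphic compact pieces --- matching frontier circles, boundary arcs, and their cyclic/linear arrangement along $\hat{\partial}S$ --- extends over the pieces. With that repair the passage to the limit is standard, but the combinatorial matching gap above remains the substantive missing ingredient.
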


 The classification theorem of Brown--Messer implies that any \emph{one-ended}
 surface of infinite genus that has only noncompact boundary components is
 determined by the number of the noncompact boundary components. Indeed, in this
 case we have $|E_\partial(S)|=0$, $|E(S)|=|E_\infty(S)|=1$, and the $v$ map in
 the surface diagram  \eqref{diag:surfacediagram} is a constant map. Any orientation $\mathcal{O}$ corresponds to a subset of $E(\hat{\partial} S)$ with one end from each $e^{-1}(x)$ where $x$ is a noncompact boundary component. Hence, $|E(\hat{\partial}S)| = 2|\pi_0(\hat{\partial}S)|$ determines the homeomorphism type of the diagram, and so does that of the surface by \Cref{thm:classification}. Perhaps surprisingly, this implies that the two surfaces from \Cref{fig:SLMNs2} are the 1-Sliced Loch Ness Monster and the $\infty$-Sliced Loch Ness Monster. This can be verified by counting their noncompact boundary components.

\begin{figure}[ht!]
    \centering
    \includegraphics[width=0.6\textwidth]{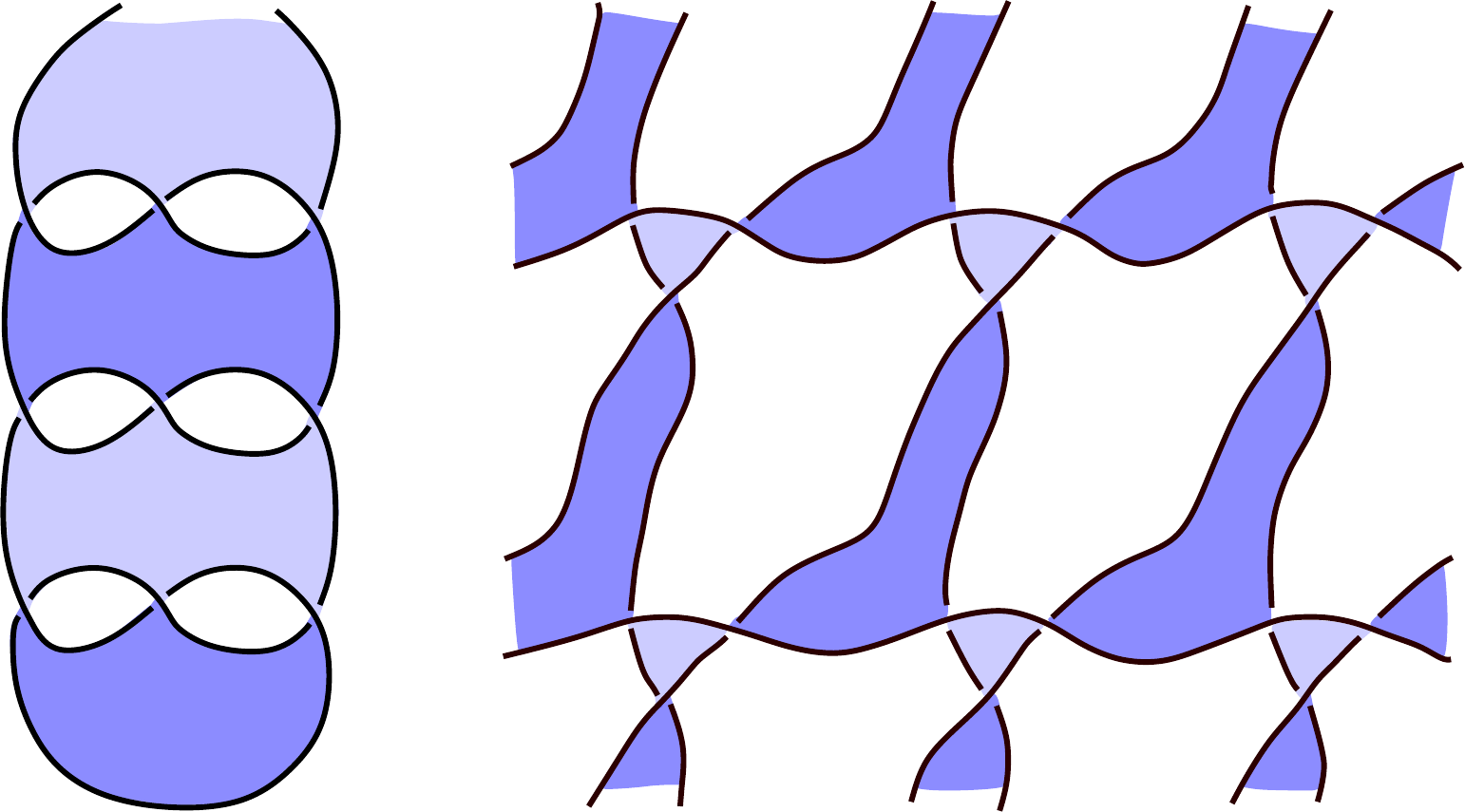}
    \caption{Embeddings of the 1-sliced Loch Ness monster (left) and the $\infty$-sliced Loch Ness monster (right) in $\R^3$ as Seifert surfaces for long knots/links. The right image was traced from Candel--Conlon \cite[Figure 12.5.11]{candel2000foliationsI}. Note these embeddings suggest a deformation retraction of each surface onto a spine.}
    \label{fig:SLMNs2} 
\end{figure}

Now we introduce a useful notion when decomposing a surface with noncompact boundary components.
Let $S^\circ = S \setminus \partial S$ be a surface obtained from $S$ by deleting the boundary. Then we get a canonical continuous map $\pi: E(S) \to E(S^\circ)$. See \cite[Section 4.2]{dickmann2023mapping} for a detailed construction of $\pi$. Let $V = v(E(\hat{\partial}S))$, the set of ends of $S$ coming from noncompact boundary components.

\begin{definition} (Boundary chains and boundary ends) \label{def:chains}
A \textbf{boundary chain} of a surface $S$ is a subset of $E(S)$ of the form $\pi^{-1}(p)$ for some $p \in \pi(V)$. A \textbf{boundary end} will refer to any end in a boundary chain. Occasionally, we will conflate definitions and use boundary chain to refer to the union of noncompact boundary components with ends in the chain. See \Cref{fig:boundaryend} for an example.
\end{definition}
\begin{figure}[ht!]
    \centering
    \includegraphics[width=0.7\textwidth]{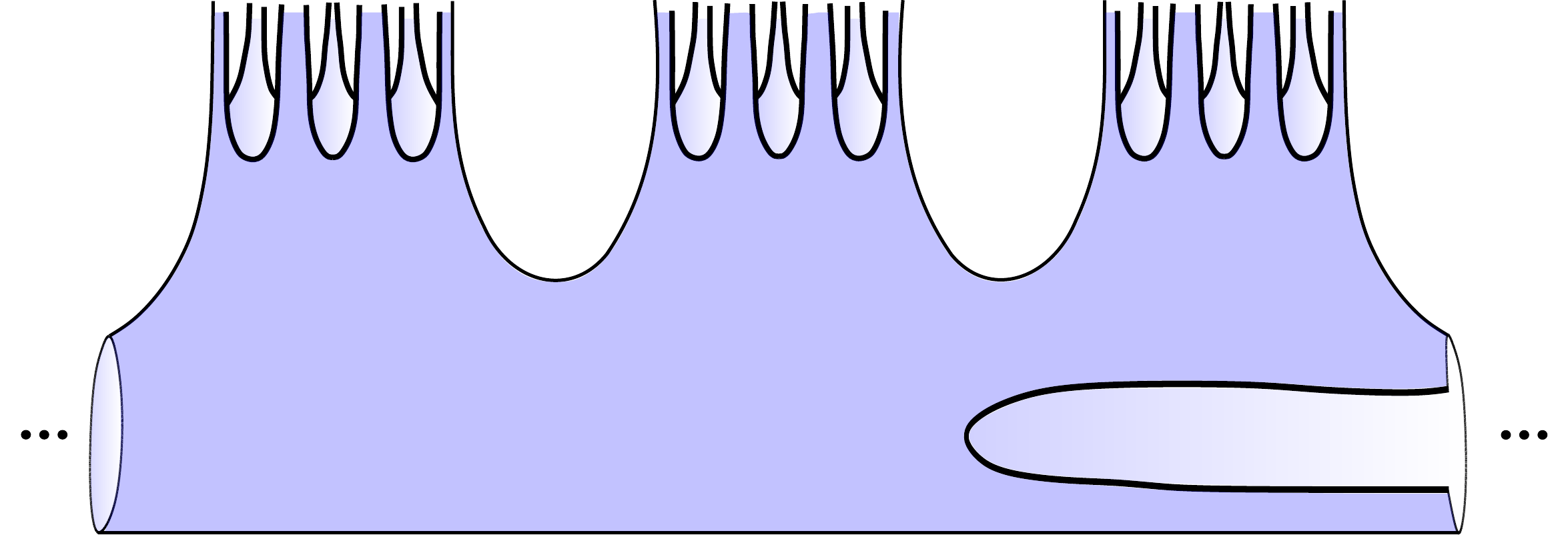}
    \caption{The ``leftmost" end of the surface shown is not a boundary end, even though it is accumulated by boundary chains. The ``rightmost" end is a boundary end.}
    \label{fig:boundaryend}
\end{figure}

 To build intuition, consider a compact surface $S$ with one compact boundary component. Form a surface $\hat{S}$ with noncompact boundary by removing a finite set of points $\mathcal{P}=\{p_1,\ldots,p_n\}$ from the boundary component of $S$. Each point $p_i$ corresponds to a degenerate end of the surface $\hat{S}$. If we delete the remaining boundary from $\hat{S}$ to obtain $S^\lilo$, then the original boundary component of $S$ becomes a single end. Because all the degenerate ends in $\hat{S}$ collapse to a single end when we delete the boundary, they form a boundary chain of $\hat{S}$.  This example did not rely on the set of points being finite at all; the same reasoning holds if we had instead deleted any closed subset of a Cantor set from the original boundary component of $S$.   

Also, we remark that a degenerate end is a boundary end but not vice versa; for example, a boundary end may be accumulated by genus.

\section{Surfaces not properly homotopic to graphs}
\label{sec:surfacesNotPHEtographs}

Now we prove the forward direction of \Cref{thm:maincharacterization}. For comparison, we will first use compactly supported cohomology to prove a simpler statement in \Cref{prop:notgraph_borderless}: a surface without boundary is not PHE to a graph. We then use a tailor-made variant of compactly supported cohomology to prove the general case in \Cref{prop:notgraph}: a surface is PHE to a graph only if it does not have a \emph{boundary isolating curve}. The idea to use compactly supported cohomology and some possible variant was suggested to us by Mladen Bestvina.

A curve on a surface $S$ is the image of an immersion $S^1 \to S$, and we say a curve is simple if it can be realized as the image of an embedding $S^1 \to S$. 
\begin{definition}
    Let $S$ be a surface with nonempty boundary. Then a simple closed curve $\alpha$ on $S$ is said to be \textbf{boundary isolating} if $S\setminus \alpha$ has two components $Y,Z$ with the following properties: \begin{itemize}
        \item $Y$ has a boundary component,
        \item $Z$ has no boundary, and
        \item $Z\cup \alpha$ is not compact.
    \end{itemize}
Equivalently, the quotient $S / \alpha$ is a wedge of surfaces consisting of a surface with nonempty boundary and a noncompact surface without boundary.
\end{definition}

See \Cref{fig:boundaryisolating} for examples. Also note a curve bounding a single puncture is a boundary isolating curve. The existence of a boundary isolating curve says that the surface ``has a large enough subsurface without boundary." On the other hand, for a surface without a boundary isolating curve we can say ``every end sees boundary." The following statement makes this precise.

\begin{proposition} \label{prop:isolatingnbhd}
    $S$ has a boundary isolating curve if and only if $S$ has nonempty boundary and there exists a closed neighborhood $U\subset S$ of some end such that $\partial U$ has one compact component.
\end{proposition}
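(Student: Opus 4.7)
The plan is to handle the two implications separately; I expect the forward direction to be essentially a repackaging of the definition, while the backward direction carries the real topological content.

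For the forward direction, I start with a boundary isolating curve $\alpha$ and its defining decomposition $S \setminus \alpha = Y \sqcup Z$, and take $U := \overline{Z} = Z \cup \alpha$. This $U$ is closed, disjoint from $\partial S$ (since $Z$ has no boundary), and its frontier in $S$ equals the simple closed curve $\alpha$. Since $U$ is noncompact with compact frontier, $U$ contains some end of $S$, so $U$ is a closed neighborhood of an end, and $\partial U = \alpha$ has exactly one compact component. The fact that $Y$ contains a boundary component witnesses $\partial S \neq \emptyset$.

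For the backward direction, let $U$ be a closed neighborhood of an end whose frontier $\partial U$ consists of a single compact simple closed curve $\alpha$. The main step is to show $\alpha$ separates $S$. Locally, $\alpha$ admits a tubular neighborhood with one side lying in $U$ and the other in $S \setminus U$; were $\alpha$ nonseparating, these two sides would lie in a common component of $S \setminus \alpha$ and could be joined by a path there. But such a path goes from $U$ to $S \setminus U$, so it must cross the frontier $\partial U$ somewhere off of $\alpha$, contradicting $\partial U = \alpha$. Hence $\alpha$ is separating, and a routine connectivity argument shows that the two components of $S \setminus \alpha$ are exactly $U \setminus \alpha$ and $S \setminus U$.

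To conclude that $\alpha$ is boundary isolating, I verify the three defining conditions. First, $Z := U \setminus \alpha$ is noncompact (so $Z \cup \alpha = U$ is as well) because $U$ is a neighborhood of an end. Second, $Z$ has no boundary, because $U \cap \partial S = \emptyset$: any piece of $\partial S$ inside $U$ would contribute an additional component to $\partial U$, contradicting the one-component hypothesis. Third, $Y := S \setminus U$ contains all of $\partial S$, which is nonempty by hypothesis, so $Y$ has a boundary component.

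The main obstacle I foresee is the first step of the backward direction, namely translating the single-compact-component hypothesis on $\partial U$ into both the separating property of $\alpha$ and the disjointness $U \cap \partial S = \emptyset$. Once this topological bookkeeping is done, the rest is definitional unwinding.
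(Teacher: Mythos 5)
Your argument is the paper's own (two‑line) proof with the details filled in: take $U=\overline{Z}$ for the forward direction, and show that $\partial U$ is boundary isolating for the converse. The forward direction and your separation argument for $\alpha$ are fine. The step that does not survive scrutiny as written is the claim that ``any piece of $\partial S$ inside $U$ would contribute an additional component to $\partial U$.'' You announce at the start of the backward direction that $\partial U$ is the topological \emph{frontier} of $U$ in $S$, and under that reading the claim is false: a compact boundary component of $S$ contained in $U$ together with a collar lies in the interior of $U$ relative to $S$ and contributes nothing to the frontier. Worse, under the frontier reading the proposition itself fails. Take $S$ to be the $1$-sliced Loch Ness Monster and $U$ the complement of an open disk in the interior of $S$: then $U$ is a closed neighborhood of the unique end whose frontier is a single compact circle, yet $S$ has no boundary isolating curve (it is PHE to a graph); indeed that circle is not boundary isolating because its boundaryless complementary component is the disk, whose closure is compact.

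The statement is only true, and your deduction that $U\cap\partial S=\emptyset$ only valid, if $\partial U$ is read as the boundary of $U$ \emph{as a subsurface}, i.e.\ the frontier together with $U\cap\partial S$. With that reading, any circle or arc of $\partial S$ meeting $U$ really does produce an extra component of $\partial U$ (it is disjoint from the single compact frontier circle $\alpha$), so $U\cap\partial S=\emptyset$, and the rest of your verification of the three defining conditions goes through. You should make this interpretation explicit at the outset of the backward direction; as it stands, the sentence justifying $U\cap\partial S=\emptyset$ contradicts your own definition of $\partial U$ and is exactly where a counterexample enters.
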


\begin{proof}
    For the forward direction, take $U = \overline{Z}$. For the other direction, $\partial U$ is a boundary isolating curve.
\end{proof}

The following two facts are the main technical tools.

\begin{fact}[Poincar{\'e} Duality] \label{fact:PD}
    Let $M$ be a closed oriented (possibly noncompact) $n$-manifold and
    $k$ be any integer such that $0 \le k \le n$.
    Then we have an isomorphism
    \[
        H^k_c(M; \Z) \overset{\cong}{\longrightarrow} H_{n-k}(M; \Z).
    \]
\end{fact}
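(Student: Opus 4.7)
The plan is to prove Poincaré duality via cap product with the (local) fundamental class, following the standard route (for instance Hatcher, Theorem 3.35). The only wrinkle in the noncompact setting is that $M$ does not carry a global fundamental class in $H_n(M; \Z)$, so one works with relative fundamental classes over compact subsets and passes to a direct limit.

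First, using orientability I would fix a compatible choice of local generators $\mu_x \in H_n(M, M \setminus \{x\}; \Z) \cong \Z$; for every compact $K \subset M$ this determines a unique relative fundamental class $[M \mid K] \in H_n(M, M \setminus K; \Z)$ restricting to $\mu_x$ at each $x \in K$ (existence and uniqueness by the standard Mayer--Vietoris patching argument on orientation classes). Capping with $[M \mid K]$ defines a map $H^k(M, M \setminus K; \Z) \to H_{n-k}(M; \Z)$, natural in $K$; together with the identification
\[
    H^k_c(M; \Z) \;=\; \varinjlim_{K \text{ compact}} H^k(M, M \setminus K; \Z),
\]
these assemble into the duality homomorphism $D_M : H^k_c(M; \Z) \to H_{n-k}(M; \Z)$.

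Second, I would show $D_M$ is an isomorphism by induction over open submanifolds $U \subseteq M$. The steps are: (i) verify $D_{\R^n}$ directly from the definitions, where the computation reduces to $H^n_c(\R^n;\Z) \cong \Z \cong H_0(\R^n;\Z)$; (ii) if $U, V \subseteq M$ are open with $D_U$, $D_V$, $D_{U \cap V}$ all isomorphisms, conclude the same for $D_{U \cup V}$ via a Mayer--Vietoris five-lemma argument; (iii) pass to direct limits along a countable exhaustion by unions of Euclidean opens, using that both $H^\ast_c$ and $H_\ast$ commute with filtered colimits of opens. Since $M$ is second countable, such an exhaustion exists and the induction terminates.

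The main obstacle is step (ii): one must check that $D_{\bullet}$ commutes up to sign with the Mayer--Vietoris connecting homomorphisms for compactly supported cohomology and singular homology. This is a standard but bookkeeping-heavy naturality check for the cap product, boiling down to the compatibility of the fundamental classes $[U \cup V \mid K]$ with those of $U$, $V$, and $U \cap V$ under restriction. Everything else is routine once this compatibility is in hand.
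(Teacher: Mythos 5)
The paper states this as a background \textbf{Fact} and gives no proof of its own, so there is nothing to compare against; it is the standard Poincar\'e duality theorem for oriented manifolds without boundary. Your outline is the standard and correct argument (cap product with relative fundamental classes $[M \mid K]$, the identification $H^k_c(M;\Z) = \varinjlim_K H^k(M, M\setminus K;\Z)$, and the induction over opens via the $\R^n$ case, Mayer--Vietoris, and filtered colimits), exactly as in Hatcher, Theorem 3.35, so it is an acceptable justification of the Fact.
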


\begin{fact}[{\cite[Proposition III.6.6]{Ive1986}}, PHE-invariance of Compactly Supported Cohomology]
\label{fact:PHEinvariance}
Let $G$ be an abelian group.
Two properly homotopic maps $f,g:X \to Y$ between locally compact spaces $X,Y$ induce the same map
\[
f^*,g^*: H_c^*(Y,G) \to H_c^*(X,G).
\]
In particular, if $h:X \to Y$ is a proper homotopy equivalence (with proper homotopy inverse), then
\[
    h^*:H_c^*(Y,G) \to H_c^*(X,G)
\]is an isomorphism.
\end{fact}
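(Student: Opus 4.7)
The plan is to mimic the classical proof that singular cohomology is homotopy invariant, tracking supports carefully throughout. Work with the singular model: let $C^*_c(X;G)$ denote the subcomplex of $C^*(X;G)$ consisting of cochains $\phi$ for which there exists a compact $K \subseteq X$ with $\phi(\sigma) = 0$ whenever the image of $\sigma$ is disjoint from $K$; its cohomology is $H^*_c(X;G)$. Properness of $f$ immediately implies that $f^*$ restricts to a chain map $C^*_c(Y;G) \to C^*_c(X;G)$, since if $\phi$ is supported in compact $K \subseteq Y$ then $f^*\phi$ is supported in the compact set $f^{-1}(K)$.

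The heart of the argument is the prism operator. Given a proper homotopy $H: X \times I \to Y$ from $f$ to $g$, let $P: C^n(Y;G) \to C^{n-1}(X;G)$ denote the standard prism operator satisfying the cochain-homotopy identity $\delta P + P\delta = f^* - g^*$. Concretely, $P\phi$ evaluated on a singular simplex $\sigma: \Delta^{n-1} \to X$ is a signed sum of values of $\phi$ on the $n$-simplices obtained by a standard triangulation of the prism $H \circ (\sigma \times \mathrm{id}_I): \Delta^{n-1} \times I \to Y$. If $\phi$ has support in compact $K \subseteq Y$, each term vanishes unless $\sigma(\Delta^{n-1})$ meets the projection $L := \pi_X(H^{-1}(K)) \subseteq X$, where $\pi_X: X \times I \to X$ is the projection. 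Since $H$ is proper, $H^{-1}(K)$ is compact, hence so is $L$; thus $P\phi$ is supported in $L$, and $P$ restricts to a map $C^*_c(Y;G) \to C^{*-1}_c(X;G)$. Passing to cohomology, the identity $\delta P + P\delta = f^* - g^*$ gives $f^* = g^*$ on $H^*_c$.

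The isomorphism statement is now formal. If $h: X \to Y$ is a proper homotopy equivalence with proper inverse $h'$, then $h \circ h'$ is properly homotopic to $\mathrm{id}_Y$ and $h' \circ h$ is properly homotopic to $\mathrm{id}_X$, so the first part yields $h^* \circ h'^* = \mathrm{id}$ on $H^*_c(Y;G)$ and $h'^* \circ h^* = \mathrm{id}$ on $H^*_c(X;G)$, and hence $h^*$ is an isomorphism with inverse $h'^*$.

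The only genuine obstacle is the support bookkeeping for the prism operator, which is precisely where the properness of the homotopy is used; every other step is formally identical to the classical homotopy-invariance proof. A more conceptual route, as in Iversen, is to define $H^*_c$ as the right derived functor of compactly supported sections of the constant sheaf $\underline{G}$ and invoke a proper base-change/homotopy axiom at the sheaf-theoretic level, replacing the explicit cochain calculation with an abstract derived-functor argument.
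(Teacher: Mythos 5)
Your argument is correct, but note that the paper does not prove this statement at all: it is quoted as a Fact with a citation to Iversen (Proposition III.6.6), where it is established sheaf-theoretically, with $H^*_c$ defined as the derived functor of compactly supported sections. Your proof instead works in the singular model, $C^*_c(X;G)=\varinjlim_K C^*(X,X\setminus K;G)$, and the support bookkeeping is right: properness of $f$ puts $f^*\phi$ in $C^*(X,X\setminus f^{-1}(K))$, and properness of the homotopy $H$ is exactly what makes $L=\pi_X(H^{-1}(K))$ compact, so the prism operator preserves compact supports and the usual cochain-homotopy identity descends to $H^*_c$. Since direct limits are exact, the cohomology of this subcomplex really is $\varinjlim_K H^*(X,X\setminus K)$, so the argument is complete for the singular version. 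The one caveat worth flagging is that for arbitrary locally compact (Hausdorff) spaces the singular model need not agree with Iversen's sheaf-theoretic $H^*_c$; the two coincide on the locally contractible, paracompact spaces relevant here (surfaces and locally finite graphs), so your proof covers every use of the Fact in the paper. The trade-off between the two routes is the expected one: your prism argument is elementary and self-contained but tied to the singular model, while the derived-functor formulation you mention at the end is the one that matches the cited source verbatim and applies to general locally compact spaces.
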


These facts quickly show that a surface that is PHE to a graph must have nonempty boundary:
\begin{proposition} \label{prop:notgraph_borderless}
    If $S$ is a surface without boundary, then $S$ is not properly homotopy equivalent to a graph.
\end{proposition}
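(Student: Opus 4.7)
The plan is to exploit the dimension mismatch between a boundaryless surface and a graph, detected by the top compactly supported cohomology.

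Assume for contradiction that $S$ is a boundaryless surface proper homotopy equivalent to a graph $\G$. Since $S$ is orientable (by the paper's convention) and has empty boundary, it is a closed oriented $2$-manifold in the sense of \Cref{fact:PD}. Apply Poincar{\'e} duality with $n=2$, $k=2$ to obtain
\[
H^2_c(S;\Z) \;\cong\; H_0(S;\Z) \;\cong\; \Z,
\]
where the last isomorphism uses that $S$ is connected.

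On the graph side, since $\G$ is a $1$-dimensional CW complex, the cellular cochain complex computing compactly supported cohomology is concentrated in degrees $0$ and $1$, so $H^2_c(\G;\Z) = 0$. By \Cref{fact:PHEinvariance}, a proper homotopy equivalence $S \to \G$ would induce an isomorphism $H^2_c(\G;\Z) \xrightarrow{\cong} H^2_c(S;\Z)$, yielding $0 \cong \Z$, a contradiction.

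There is essentially no obstacle here beyond checking that the hypotheses of the two cited facts apply: orientability comes from the blanket assumption on surfaces, ``closed'' in \Cref{fact:PD} means boundaryless (not compact), and both $S$ and $\G$ are locally compact Hausdorff so \Cref{fact:PHEinvariance} applies. The real work in this section is \Cref{prop:notgraph}, whose proof will require the variant ``compactly supported cohomology at infinity'' since $H^2_c$ vanishes once $S$ is allowed to have nonempty boundary.
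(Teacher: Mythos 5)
Your proof is correct and follows exactly the paper's argument: Poincar\'e duality gives $H^2_c(S;\Z)\cong H_0(S;\Z)\cong\Z$, while $H^2_c(\G;\Z)=0$ since $\G$ is one-dimensional, contradicting the PHE-invariance of compactly supported cohomology. No meaningful differences from the paper's own proof.
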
 

\begin{proof}
 Poincar\'e duality (\Cref{fact:PD}) gives an isomorphism $H^2_c(S; \Z)\cong H_0(S; \Z)$, and $H_0(S; \Z)=\Z$ because we have assumed our surfaces are connected. On the other hand, $H^2_c(\G; \Z)=0$ for any graph $\G$ because it is one-dimensional. By \Cref{fact:PHEinvariance}, we see that $S$ cannot be properly homotopy equivalent to a graph.
\end{proof}

Note that we needed to use compactly supported cohomology as opposed to standard cohomology since the top cohomology group of a noncompact manifold is trivial. On the other hand, any manifold with boundary has trivial compactly supported top cohomology, so we need a new invariant to detect when an end is accumulated by boundary.

Let $n \ge 0$. Let $X$ be a locally compact, $\sigma$-compact, Hausdorff space. For compact sets $K,L \subset X$, an inclusion $K \hookrightarrow L$ induces a restriction homomorphism between relative compactly supported cohomology groups: $H^n_c(X,L) \to H^n_c(X,K)$. Let $H^\bullet_{c_\infty}(X; \Z)$ be the \textbf{compactly supported cohomology at infinity} of $X$, defined as the inverse limit:

\[
H^n_{c_\infty}(X; \Z) := \varprojlim_{K \subset X \: \text{compact}} H^n_{c}(X, K; \Z),
\]
for every $n \ge 0$. As a compact exhaustion $\{K_i\}_{i=1}^\infty$ of $X$ is cofinal with respect to the set of compact subsets of $X$, it can be simply put as:
\[
H^n_{c_\infty}(X; \Z) = \varprojlim_{i} H^n_{c}(X, K_i; \Z).
\]

We denote by $r_i^X: H^n_{c}(X, K_i; Z) \to H^n_{c}(X, K_{i-1}; \Z)$ the restriction homomorphisms induced by the inclusions $K_{i-1} \hookrightarrow K_i$ from the compact exhaustion of $X$.

\begin{lemma}\label{lem:cohomPHEinvariance}
Compactly supported cohomology at infinity is a proper homotopy invariant in locally compact, $\sigma$-compact, Hausdorff spaces. 
\end{lemma}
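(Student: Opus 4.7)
The plan is to show functoriality with respect to proper maps, then proper homotopy invariance at the level of the inverse system, and finally conclude by the usual two-sided inverse argument. This mimics the standard proof of \Cref{fact:PHEinvariance} but must be carried out on the level of compact exhaustions.

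First, I would establish functoriality. If $f : X \to Y$ is a proper map between locally compact Hausdorff spaces, then for every compact $L \subset Y$ the preimage $f^{-1}(L) \subset X$ is compact, so $f$ is a map of pairs $(X, f^{-1}(L)) \to (Y, L)$ and induces $f^{*} : H^{n}_{c}(Y, L) \to H^{n}_{c}(X, f^{-1}(L))$. These maps are compatible as $L$ grows, since $L \subset L'$ forces $f^{-1}(L) \subset f^{-1}(L')$. The key cofinality observation is that the collection $\{f^{-1}(L) : L \subset Y \text{ compact}\}$ is cofinal in the directed set of compact subsets of $X$: for any compact $K \subset X$, $f(K)$ is compact in $Y$ and $K \subset f^{-1}(f(K))$. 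Thus $\varprojlim_{L} H^{n}_{c}(X, f^{-1}(L)) = H^{n}_{c_{\infty}}(X;\Z)$, and we obtain a well-defined homomorphism $f^{*} : H^{n}_{c_{\infty}}(Y;\Z) \to H^{n}_{c_{\infty}}(X;\Z)$.

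Next, I would prove that properly homotopic proper maps induce the same map on $H^{n}_{c_{\infty}}$. Suppose $f_{0}, f_{1} : X \to Y$ are properly homotopic via $H : X \times I \to Y$. For each compact $L \subset Y$, set
\[
K_{L} := \mathrm{pr}_{X}\bigl(H^{-1}(L)\bigr) \subset X,
\]
which is compact because $H$ is proper. Both $f_{0}^{-1}(L)$ and $f_{1}^{-1}(L)$ are contained in $K_{L}$, so outside $K_{L}$ the homotopy $H$ takes values in $Y \setminus L$; that is, $H$ restricts to a proper homotopy $(X \setminus K_{L}) \times I \to Y \setminus L$ between $f_{0}|_{X \setminus K_{L}}$ and $f_{1}|_{X \setminus K_{L}}$. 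Applying \Cref{fact:PHEinvariance} to this restricted homotopy, and interpreting relative compactly supported cohomology via the open complement (so that $H^{n}_{c}(X, K_{L})$ receives $H^{n}_{c}(X, f_{i}^{-1}(L))$ through the restriction induced by $X \setminus K_{L} \hookrightarrow X \setminus f_{i}^{-1}(L)$), we conclude that $f_{0}^{*}$ and $f_{1}^{*}$ agree after composing down to the $K_{L}$-level. Because the collection $\{K_{L}\}$ is cofinal in compact subsets of $X$ (given any compact $K_{0} \subset X$, take $L = H(K_{0} \times I)$, so that $K_{0} \subset K_{L}$), equality at all such levels is precisely equality in $\varprojlim$.

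Finally, if $f : X \to Y$ is a proper homotopy equivalence with proper homotopy inverse $g : Y \to X$, then $g \circ f$ is properly homotopic to $\mathrm{id}_{X}$ and $f \circ g$ is properly homotopic to $\mathrm{id}_{Y}$. Functoriality together with the previous step yields $f^{*} \circ g^{*} = (g \circ f)^{*} = \mathrm{id}$ on $H^{n}_{c_{\infty}}(X;\Z)$ and $g^{*} \circ f^{*} = (f \circ g)^{*} = \mathrm{id}$ on $H^{n}_{c_{\infty}}(Y;\Z)$, so $f^{*}$ is an isomorphism.

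The only real obstacle is the relative version of \Cref{fact:PHEinvariance}, where the two maps $f_{0}, f_{1}$ need not agree even on which compact subset they push into $L$. The construction of the intermediate compact $K_{L}$ absorbing both $f_{0}^{-1}(L)$ and $f_{1}^{-1}(L)$ is what removes this difficulty and lets us cite the absolute statement of \Cref{fact:PHEinvariance} on the open complements; the rest is inverse-limit bookkeeping and cofinality.
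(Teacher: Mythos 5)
Your functoriality step and the closing two-sided-inverse argument are correct and essentially match the paper. The gap is in the middle step, where you try to deduce that properly homotopic maps agree on $H^n_{c_\infty}$ from the absolute statement of \Cref{fact:PHEinvariance} applied to open complements. Two things go wrong. First, the map you invoke does not exist: since $f_i^{-1}(L) \subset K_L$, relative cohomology and the structure maps of the inverse system both go $H^n_c(X, K_L) \to H^n_c(X, f_i^{-1}(L))$, from the larger compact set to the smaller; the map ``$H^n_c(X, f_i^{-1}(L)) \to H^n_c(X, K_L)$'' you want is backwards, and its open-complement avatar $H^n_c(X \setminus f_i^{-1}(L)) \to H^n_c(X \setminus K_L)$ would be a contravariant map for a non-proper open inclusion, which compactly supported cohomology does not admit ($H^\ast_c$ is covariant under open inclusions). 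Second, the restrictions you want to feed into \Cref{fact:PHEinvariance} need not be proper: for a compact $C \subset Y \setminus L$, the preimage of $C$ under $f_i|_{X \setminus K_L}$ is $f_i^{-1}(C) \setminus K_L$, an open, possibly non-closed subset of a compact set, hence possibly non-compact; the same applies to $H|_{(X \setminus K_L)\times I}$. So the absolute invariance statement cannot be applied on $X \setminus K_L$ as written.

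What the inverse-limit bookkeeping actually requires at each level is: for a compact $K \subset X$ and any compact $L \supset H(K \times I)$, the two proper maps of pairs $f_0, f_1 : (X, K) \to (Y, L)$ induce the same map $H^n_c(Y, L) \to H^n_c(X, K)$. (Equivalently, after replacing your $f_i^{-1}(L)$ by a large enough $L'$ with $f_i^{-1}(L') \supset K_L$, the legitimate system maps land you in the same relative statement.) This is a \emph{relative} proper-homotopy-invariance claim that does not reduce to the absolute one on complements, because $f_i$ does not carry $X \setminus K$ into $Y \setminus L$. The paper proves it by the cylinder trick: the projection $p : X \times [0,1] \to X$ satisfies $p^{-1}(K_i) = K_i \times [0,1]$, so it is a proper homotopy equivalence of pairs at every level and $p^\ast : H^n_{c_\infty}(X) \to H^n_{c_\infty}(X \times [0,1])$ is an isomorphism; then the sections $\iota_t$ satisfy $p\iota_t = \mathrm{id}$, so $\iota_0^\ast = \iota_1^\ast = (p^\ast)^{-1}$ and $f_0^\ast = \iota_0^\ast H^\ast = \iota_1^\ast H^\ast = f_1^\ast$. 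Replacing your $K_L$ argument with this step repairs the proof.
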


\begin{proof}
    Let $f: X \to Y$ be a proper map between locally compact, $\sigma$-compact, Hausdorff spaces. Take a compact exhaustion $\{K_i\}_{i=1}^\infty$ of $Y$. As $f^{-1}(K_i)$ is compact for each $i \ge 0$, the collection $\{f^{-1}(K_i)\}_{i=1}^\infty$ is a compact exhaustion for $X$. Also, $f$ restricts to a proper map $f_i:f^{-1}(K_i) \to K_i$ for each $i \ge 0$. Hence, it induces a homomorphism $(f_i)^*: H^n_c(Y,K_i;\Z) \to H^n_c(X,f^{-1}(K_i);\Z)$ for each $n \ge 0$ and $i \ge 0$.
    Then the $(f_i)^*$ commute with the restriction homomorphisms $r_i^X$ and $r_i^Y$ induced by the compact exhaustions of $X$ and $Y$ respectively, it follows by the universal property of inverse limits that $f^*:=\varprojlim_{i} (f_i)^*: H^n_{c_\infty}(Y;\Z) \to H^n_{c_\infty}(X;\Z)$ is a homomorphism.
    Moreover, once again by the universal property of inverse limits, for proper maps $f: X \to Y$ and $g:Y \to Z$, it follows that $(gf)^* = f^* g^* : H^n_{c_\infty}(Z;\Z) \to H^n_{c_\infty}(X;\Z)$.

    Consider the standard projection $p:X \times [0,1] \to X$ defined as $p(x,t):=x$ for each $x \in X$ and $t \in [0,1]$. We claim the induced map
    \[
        p^*: H^n_{c_\infty}(X;\Z) \to H^n_{c_\infty}(X \times [0,1];\Z)
    \]
    is an isomorphism. Consider a compact exhaustion $\{K_i\}_{i=1}^\infty$ of $X$. Then $p^{-1}(K_i) = K_i \times [0,1]$ for each $i \ge 1$, and $\{K_i \times [0,1]\}_{i=1}^\infty$ forms a compact exhaustion of $X \times [0,1]$. The restriction $p_i:=p|_{K_i \times [0,1]}:K_i \times [0,1] \to K_i$ is a proper map, as $p^{-1}(K_i) = K_i \times [0,1]$. Moreover, $p_i$ is a proper homotopy equivalence with a proper homotopy inverse (the inclusion). Therefore, $p$ induces an isomorphism $H^n_c(X,K_i;\Z) \to H^n_c(X \times [0,1],K_i \times [0,1];\Z)$ for each $i \ge 1$. Since the $r_i^X$ and $r_i^{X \times [0,1]}$ commute with the $p_i^*$, we get the desired isomorphism $p^*: H^n_{c_\infty}(X;\Z) \to H^n_{c_\infty}(X \times [0,1];\Z)$.

    Now suppose $f$ and $g$ are properly homotopic proper maps. We will show $f^*=g^*:H^n_{c_{\infty}}(Y;\Z) \to H^n_{c_\infty}(X;\Z)$ for $n \ge 0$.
    Let $H: X \times I \to Y$ be the homotopy between $f$ and $g$ such that $H(x,0) = f(x)$ and $H(x,1)=g(x)$ for all $x \in X$. For $t \in [0,1]$, let $\iota_t: X \to X \times [0,1]$ be the section of $H$ such that $\iota_t(x):=(x,t)$ for $x \in X$. 
    Then we have $p\iota_t = \id_X$ for every $t \in [0,1]$. 
    Also, by the previous paragraph, the induced map $p^*:H^n_{c_\infty}(X;\Z) \to H^n_{c_\infty}(X \times [0,1];\Z)$ is an isomorphism.
    Since $(p\iota_t)^* = (\id_X)^*=\id_{H^n_{c_\infty}(X)}$, we have $(\iota_t)^* = (p^*)^{-1}:H^n_{c_\infty}(X \times [0,1] ; \Z) \to H^n_{c_\infty}(X;\Z)$ for every $t \in [0,1]$. In particular, $(\iota_0)^*=(\iota_1)^*$. Since $f = H\iota_0$ and $g = H\iota_1$, it follows that
    \[
        f^* = \iota_0^* H^* = \iota_1^* H^* = g^*,
    \]
    as desired. 

    Finally, suppose $f:X \to Y$ is a proper homotopy equivalence and say $g: Y \to X$ is its proper homotopy inverse. That is, $f \circ g$ is properly homotopic to the identity, and so is $g \circ f$. By the previous paragraph, $(gf)^* = f^* g^*$ is the identity map on $H^n_{c_\infty}(X;\Z)$ and $g^* f^*$ is the identity map on $H^n_{c_\infty}(Y;\Z)$. This implies that $f^*:H^n_{c_\infty}(Y;\Z) \to H^n_{c_\infty}(X;\Z)$ is an isomorphism, concluding the proof.
\end{proof}

Now we are ready to prove the main proposition of this section.

\begin{proposition} \label{prop:notgraph}
    A surface has a boundary isolating curve if and only if it has nontrivial 2nd compactly supported cohomology at infinity. Therefore, a surface is PHE to a graph only if it has no boundary isolating curve.
\end{proposition}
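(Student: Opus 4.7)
The plan is to identify $H^2_c(S,K)$ for compact subsurfaces-with-boundary $K$ as a combinatorial count of components of $S\setminus K$, and then read the iff off \Cref{prop:isolatingnbhd}. The central identification, proved via excision (valid since $K$ has a collar in $S$) followed by Poincar\'e--Lefschetz duality on the oriented $2$-manifold-with-boundary $S \setminus K$ (whose manifold boundary is $\partial S \setminus K$), is
\[
H^2_c(S, K) \;\cong\; H^2_c(S \setminus K) \;\cong\; H_0(S \setminus K,\, \partial S \setminus K).
\]
The rightmost group is free abelian on the components of $S \setminus K$ that are disjoint from $\partial S$. Since compact exhaustions $\{K_i\}$ by compact subsurfaces-with-boundary are cofinal among compact subsets,
\[
H^2_{c_\infty}(S) \;=\; \varprojlim_i H_0(S \setminus K_i,\, \partial S \setminus K_i),
\]
with transitions induced by the open inclusions $S \setminus K_{i+1} \hookrightarrow S \setminus K_i$.

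Given a boundary isolating curve $\alpha$ separating $S$ into $Y \sqcup Z$ with $Z \cap \partial S = \emptyset$, I pick the exhaustion with $\alpha \subset \mathrm{int}(K_1)$; since $Z \cup \alpha$ is noncompact and $\alpha$ is compact, $Z$ contains an end $e$ of $S$, and the component $C_i$ of $S \setminus K_i$ containing $e$ lies in $Z$ and hence is disjoint from $\partial S$. The nested sequence $([C_i])$ is compatible and nonzero in every term, witnessing $H^2_{c_\infty}(S) \neq 0$. Conversely, a nonzero element of the inverse limit produces a nested sequence $C_{i+1} \subset C_i$ of components of $S \setminus K_i$ disjoint from $\partial S$ with $[C_i] \neq 0$ for all $i$; a compact $C_i$ would be swallowed by some $K_j$, so eventually $C_i$ is noncompact and contains some end $e$ of $S$. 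Since $C_i$ is a neighborhood of $e$ disjoint from $\partial S$, the end $e$ is not accumulated by $\partial S$; normality of the end compactification $S^{\star}$ then furnishes a closed neighborhood $V \subset C_i$ of $e$ in $S$ with $V \cap \partial S = \emptyset$. The topological boundary $\partial V$ is a nonempty compact $1$-submanifold of $\mathrm{int}(S)$ --- nonempty because $V \neq S$ by connectedness of $S$ together with nonemptiness of $\partial S$ --- hence a disjoint union of circles, so it has a compact component, and \Cref{prop:isolatingnbhd} produces a boundary isolating curve.

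For the ``therefore'' clause, any locally finite graph $\G$ satisfies $H^2_c(\G, K) \cong H^2_c(\G \setminus K) = 0$ for all compact $K$ by dimension, so $H^2_{c_\infty}(\G) = 0$; proper homotopy invariance (\Cref{lem:cohomPHEinvariance}) then forces $H^2_{c_\infty}(S) = 0$ for any $S$ PHE to a graph, and the iff just established rules out a boundary isolating curve. The main technical obstacle is the first identification --- verifying that excision and Poincar\'e--Lefschetz duality apply to the pair $(S, K)$ in this generality (noncompact oriented $2$-manifold-with-boundary, with $K$ possibly meeting $\partial S$ in arcs); once that identification is in place, the remainder is geometric bookkeeping with components of complementary regions and end neighborhoods.
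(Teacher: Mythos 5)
Your computation of $H^2_c(S,K)$ for a compact subsurface $K$ is the same calculation the paper performs, just packaged differently: the paper collapses $K$ and applies Poincar\'e duality to the wedge summands of $S/K$, while you excise and apply Lefschetz duality to $S\setminus K$ directly; both identify $H^2_c(S,K)$ with the free abelian group on the complementary components disjoint from $\partial S$. On one point you are more careful than the paper: in the forward direction you exhibit a \emph{compatible} nonzero system $([C_i])$, whereas the paper only notes that each term $H^2_c(S,K_i)$ is nonzero, which by itself does not preclude a vanishing inverse limit. So the overall route is the paper's, with that improvement.

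There is, however, a genuine gap at the end of the converse. First, normality of the end compactification hands you a closed neighborhood $V$ of $e$, but gives you no right to assert that its topological frontier is a compact $1$-submanifold; you should instead take $V=\overline{C_i}$, whose frontier is a finite disjoint union of circles contained in $\partial K_i$. Second, and more seriously, that frontier generally consists of \emph{several} circles, and ``so it has a compact component'' does not place you in the hypothesis of \Cref{prop:isolatingnbhd}: as its one-line proof (``$\partial U$ is a boundary isolating curve'') makes clear, that proposition needs the frontier to be a \emph{single} compact circle, and an individual frontier circle of a multi-circle neighborhood need not even separate $S$, let alone be boundary isolating. The repair is a banding argument: join the frontier circles $\gamma_1,\dots,\gamma_k$ of $\overline{C_i}$ by a tree of arcs inside $\overline{C_i}$ and let $P$ be a regular neighborhood of the union; an Euler characteristic count shows $P$ has exactly one free boundary circle $\alpha$, the complement $\overline{C_i}\setminus P$ is connected (every component meets the connected circle $\alpha$, and only one can), noncompact, and disjoint from $\partial S$, while the other side of $\alpha$ contains $\partial S\neq\emptyset$; hence $\alpha$ is boundary isolating. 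Note finally that, exactly as in the paper's own converse, your argument uses $\partial S\neq\emptyset$ (to conclude $V\neq S$ and to find boundary on the other side); the stated ``if and only if'' should be read with that standing hypothesis, since a boundaryless noncompact surface such as $\R^2$ has $H^2_{c_\infty}\neq 0$ but, by definition, no boundary isolating curve.
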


\begin{proof}
The therefore statement follows from \Cref{lem:cohomPHEinvariance} and the observation that a graph has no 2-cells and thus trivial 2nd compactly supported cohomology at infinity. 

Suppose $S$ has a boundary isolating curve $\alpha$. Let $\{K_i\}$ be a compact exhaustion of $S$ such that each $K_i$ contains $\alpha$. Since $\alpha$ is boundary isolating, the quotient $S/ K_i$ is a wedge of surfaces at least one of which is noncompact with empty boundary. Fixing some $i$, we have from excision that $H^2_{c}(S, K_i; \Z) \cong H^2_c(S/K_i;\Z)$ with some nonzero $a \in H^2_c(S,K_i)$ corresponding to this noncompact surface with empty boundary.
A surface component in the wedge $S/K_i$ corresponds to a connected component of $S \setminus K_i$. Let $S_i$ be the connected component of $S \setminus K_i$ corresponding to $a$.

 Note for all $j > i$, the containment $K_i \subseteq K_j$ gives a reverse containment on connected components of the complements $S \setminus K_i$ and $S \setminus K_j$.
 Since $S_i$ has no boundaries or isolated ends (other than the ones coming from removing $K_i$ from $S$), there is some component $S_j$ of $S \setminus K_j$ without boundaries or isolated ends (excluding those arising from deleting $K_j$ from $S$), such that $S_j \subset S_i$. This implies 
 we get an $a' \in H^2_c(S,K_j)$ corresponding to $S_j$ whose image in $H^2_c(S,K_i)$ is $a$. Thus, we have that $H^2_{c_\infty}(S; \Z)$ is nontrivial.

On the other hand, if a surface does not have a boundary isolating curve then there exists $N$ such that for all $i \ge N$, either $S/ K_i$ is a wedge of surfaces with nonempty boundary or $K_i = S$ (when $S$ is compact)  so that $H^2_{c_\infty}(S; \Z)$ is trivial. \end{proof}

We note that the above proof also applies to \Cref{prop:notgraph_borderless}. The 2nd compactly supported cohomology at infinity is also useful for distinguishing the proper homotopy type of some surfaces.

\begin{example}
    Let $S_1$ be the genus zero surface with three ends, two of which are accumulated by compact boundary components. Let $S_2$ be the genus zero surface with three ends, one accumulated by compact boundary components. By \Cref{prop:notgraph}, these two surfaces are not PHE to a graph. Moreover, we can distinguish their PHE types using $H^2_{c_\infty}$. Since $H^2_{c_\infty}(S_1; \Z) = \Z$ and $H^2_{c_\infty}(S_2; \Z) = \Z^2$, these surfaces are not PHE to each other.
\end{example}

We note that there are some alternative approaches to the problem that may be useful when considering the proper homotopy classes of all surfaces. One can directly prove a surface with an interior puncture is not PHE to a graph using the fact that any finite tuple of non-nullhomotopic curves that are homotopic intersects. First, parameterize the puncture as $S^1 \times [0,\infty)$ and note the images of the $S^1 \times \{t\}$ are not nullhomotopic as a PHE preserves $\pi_1$. Since they are homotopic to each other in $S$, their images are also homotopic in the graph. In particular, the images of $S^1 \times \{0\}$ and $S^1 \times \{t\}$ intersect for all $t>0$, so the inverse image of the image of $S^1 \times \{0\}$ is noncompact, contradicting the properness.

Alternatively, surfaces with any interior planar ends are not PHE to a graph by
the following line of reasoning: 1) proper homotopy equivalent spaces
have homotopy equivalent end space compactifications, 2) a graph and its
end space compactification are homotopy equivalent by the natural inclusion, and 3) the natural inclusion of a surface to its compactification is \emph{not} a homotopy equivalence since loops bounding interior punctures are killed in the compactification.
We wonder if these ideas and compactly supported cohomology at infinity can be used to address the following problem.

\begin{problem}
Classify surfaces up to proper homotopy equivalences.
\end{problem}

On the other hand, one can consider the following proper homotopy invariant. Brown extended Whitehead's theorem to the proper category \cite{brownphe} showing that two connected locally finite simplicial complexes $X, Y$ are in the same proper homotopy class if and only if there exists a proper map $f: X \rightarrow Y$ such that $f$ induces a homeomorphism on the ends spaces, homotopy groups, and the \textit{proper homotopy groups}.
 
Let $\hat{S}^n$ be the space defined by attaching a distinct $n$-sphere at each integer point of the half-line $[0, \infty)$. Let $\rho$ denote the half-line in $\hat{S}^n$. For a space $X$ and a properly immersed ray $\alpha$ in $X$, by a proper map of pairs $(\hat{S}^n, \rho) \rightarrow (X, \alpha)$ we mean a proper map $\hat{S}^n \rightarrow X$ where the image of $\rho$ agrees with $\alpha$ outside of a compact set.

\begin{definition}
The \textbf{$n^{th}$ proper homotopy group} $\pi^\star_n(X, \alpha)$ of a space $X$ based at a properly immersed ray $\alpha$, consists of equivalence classes of proper maps of pairs $(\hat{S}^n, \rho)$ to $(X,\alpha)$ where two maps of pairs are equivalent when they have representatives that are properly homotopic relative to $\rho$. Composition of elements is applied by considering representatives that agree on $\rho$ and composing each pair of $n$-spheres that meet at an attaching point via the usual composition in $\pi_n$. 
\end{definition}

For any two rays $\alpha, \beta$ that go to the same end of $X$, Brown showed $\pi^\star_n(X, \alpha)$ is naturally isomorphic to $\pi^\star_n(X, \beta)$ so that we may consider the proper homotopy group to depend on a choice of \textit{end} $e$ which we denote by $\pi^\star_n(X, e)$.

We remark that the PHE classification of graphs in \cite{ayala1990proper} follows from Brown's theorem. Our first useful observation from this theorem in our context is the following.

\begin{proposition}

The standard inclusion of a sliced Loch Ness monster $L_s$ into the Loch Ness monster $L$ given by attaching closed upper half-planes onto each boundary component does not induce a surjection on first proper homotopy groups. Therefore, the inclusion is not a PHE.
    
\end{proposition}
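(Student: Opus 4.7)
The plan is to exhibit an explicit element of $\pi^\star_1(L, \alpha)$ that is not in the image of the homomorphism induced by the inclusion $i: L_s \hookrightarrow L$, where $\alpha$ is a properly embedded ray to the unique end $e$ chosen to lie in the interior of $L_s$. The intuition is that loops in $L$ can ``cross over'' an attached half-plane at arbitrarily deep levels, and such crossings cannot be replicated by loops confined to $L_s$.

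First I would fix a compact exhaustion $\{K_n\}_{n \geq 1}$ of $L$ arranged so that, for an attached half-plane $H$ with bounding line $\ell = L_s \cap H$, each intersection $K_n \cap \ell$ is a nondegenerate closed interval and each $K_n \cap H$ is a half-disk meeting $\ell$ along that interval. For each $n$ I would construct a simple cross-over loop $\sigma_n \subset L \setminus K_n$ based near $\alpha(n)$: it exits $L_s \setminus K_n$ through one component of $\ell \setminus K_n$, crosses $H \setminus K_n$ once, re-enters $L_s \setminus K_n$ through the other component, and returns. Assembling these into a proper map $f: \hat{S}^1 \to L$ that sends $\rho$ to $\alpha$ and the $n$-th circle to $\sigma_n$ produces an element $[f] \in \pi^\star_1(L, \alpha)$. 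A groupoid van Kampen computation, using that $H \setminus K_n$ is simply connected and that $\ell \setminus K_n$ has two path-components, yields an identification $\pi_1(L \setminus K_n) \cong \pi_1(L_s \setminus K_n) \ast \langle t_n \rangle$ in which the class of $\sigma_n$ generates the $\Z$-factor $\langle t_n \rangle$, while $\pi_1(L_s \setminus K_n) \hookrightarrow \pi_1(L \setminus K_n)$ is the inclusion of the first free factor. By the normal form theorem for free products, no conjugate of $\sigma_n$ lies in the image of $\pi_1(L_s \setminus K_n)$.

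To finish, I would argue by contradiction. Suppose there is a proper map $g: \hat{S}^1 \to L_s$ with $n$-th loop $\tau_n$ such that $i \circ g$ is properly homotopic to $f$ via a proper homotopy $F: \hat{S}^1 \times [0,1] \to L$. Properness of $F$ forces $F^{-1}(K_1)$ to be compact in $\hat{S}^1 \times [0,1]$, so for all sufficiently large $n$ the restriction of $F$ to the $n$-th circle times $[0,1]$ lies entirely in $L \setminus K_1$ and gives a free homotopy there from $\tau_n$ to $\sigma_n$. After also arranging (using properness of $g$) that $\tau_n \cap K_1 = \emptyset$ for large $n$, the loop $\tau_n$ represents a conjugacy class lying in the image of $\pi_1(L_s \setminus K_1) \hookrightarrow \pi_1(L \setminus K_1)$; but the free homotopy in $L \setminus K_1$ forces this conjugacy class to coincide with that of $\sigma_n$, contradicting the normal form obstruction. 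Hence the inclusion fails to surject on $\pi^\star_1$, and the ``Therefore'' of the proposition follows from the fact that any PHE induces isomorphisms on $\pi^\star_1$ as part of Brown's theorem. The main technical obstacle I anticipate is managing the basepoint freedom permitted by the definition of $\pi^\star_1$ (the base ray is fixed only outside a compact set), and verifying cleanly that the proper-homotopy hypothesis confines the $n$-th homotopy to $L \setminus K_1$ for all sufficiently large $n$ so that the $\pi_1(L \setminus K_1)$ argument can be applied.
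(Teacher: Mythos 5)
Your argument is correct in outline, but it takes a genuinely different route from the paper's. The paper takes a principal exhaustion of $L$ by compact subsurfaces $P_i$, each with a single separating boundary curve $\beta_i$, attaches the $\beta_i$ to the ray $\alpha$ to form $\gamma \in \pi^\star_1(L,\alpha)$, and observes that the $\beta_i$ cannot all be simultaneously properly homotoped into $L_s$ --- the point being that each $\beta_i$ is, up to conjugacy, a cyclically reduced product of commutators $\prod_{j}[a_j,b_j]$ involving the handles of $P_1$, and while in $L$ this class is carried by $\pi_1(L\setminus \mathring{P}_{i-1})$ (the commutator products are absorbed into the amalgamating curves), in $L_s$ the complement of a compact subsurface containing $P_1\cap L_s$ contributes only a proper free factor of $\pi_1(L_s)$ into which no $\beta_i$ is conjugate. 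Your obstruction element is instead built from cross-over loops through an attached half-plane; these may well be nullhomotopic in $L$, so your obstruction lives entirely in $\pi_1(L\setminus K_1)$ via the splitting $\pi_1(L\setminus K_1)\cong \pi_1(L_s\setminus K_1)\ast\langle t_1\rangle\ast\cdots$ rather than in $\pi_1(L)$ itself. Both work: the paper's element is detected already by ordinary conjugacy classes in $\pi_1(L)=\pi_1(L_s)$, which makes the non-realizability claim quicker to state, while yours isolates the role of the attached half-planes explicitly and even produces a class invisible to $\pi_1(L)$. Two details to nail down in your writeup: (i) your normal-form claim is stated at level $K_n$ but your contradiction runs at level $K_1$, so you should record that $\sigma_n$ still crosses $H\setminus K_1$ exactly once between the two components of $\ell\setminus K_1$ (the two components of $\ell\setminus K_n$ lie in distinct components of $\ell\setminus K_1$), hence retains a nontrivial $t_1$-syllable in $\pi_1(L\setminus K_1)$; and (ii) you should arrange the exhaustion so that $L\setminus K_n$ and $L_s\setminus K_n$ are connected, or else route the conjugacy argument through the appropriate vertex group of the resulting graph-of-groups decomposition.
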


\begin{proof}
   Note the inclusion is a proper map and a homotopy equivalence, but the first statement of the lemma will imply it is not a PHE by Brown's theorem. Fix a properly immersed ray $\alpha$ lying within $L_s$ so we have a map $\pi^\star_1(L_s, \alpha)$ to $\pi^\star_1(L, \alpha)$ induced by inclusion.

     Consider a compact exhaustion of $L$ where each compact subsurface has one boundary component. Let $\{\beta_i\}$ denote the sequence of boundary components viewed as separating curves in $L$. Infinitely many of the $\beta_i$ intersect $\alpha$, so we can attach the $\beta_i$ to  $\alpha$ to get an element of $\gamma \in \pi^\star_1(L, \alpha)$. Note we cannot properly homotope $L$ so that all the $\beta_i$ simultaneously lie within $L_s$. Therefore, there is no element of $\pi^\star_1(L_s, \alpha)$ that maps to $\gamma$.
\end{proof}

Although the above proposition also follows from \Cref{thm:maincharacterization}, its proof can be modified to show the following statement that does not follow immediately from \Cref{thm:maincharacterization}.

\begin{proposition}
Let $m>n$ be finite positive integers, or $m=\infty$.
The inclusion of an $m$-sliced Loch Ness monster into an $n$-sliced Loch Ness monster given by capping some of the boundary components with upper half-planes is not a PHE.
\end{proposition}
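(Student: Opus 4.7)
The plan is to establish the proposition by showing the hypothesis of Brown's proper-Whitehead theorem fails: I will exhibit an element of $\pi_1^\star$ of the target that is not in the image of the inclusion-induced map. Write $L_n = L_m \cup \overline{H}_1 \cup \cdots$ where each closed half-plane $\overline{H}_j$ is attached along a noncompact boundary $\partial_j$ of $L_m$; by hypothesis there is at least one such $\overline{H}_1$. Fix a properly embedded ray $\alpha$ in the interior of $L_m$ going to the unique end, and choose a principal compact exhaustion $\{K_i\}$ of $L_n$ so that each $K_i$ has a single interior-boundary circle $\beta_i$ meeting $\alpha$ transversely in one point, and the intersections $K_i \cap \overline{H}_1$ exhaust $\overline{H}_1$. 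Attach the $\beta_i$'s to $\alpha$ to produce an element $\gamma \in \pi_1^\star(L_n, \alpha)$, exactly as in the preceding proposition.

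Suppose for contradiction that $\gamma = i_\ast(\eta)$ for some $\eta \in \pi_1^\star(L_m, \alpha)$, with representative $g \colon (\hat{S}^1, \rho) \to (L_m, \alpha)$, and a proper homotopy $F \colon \hat{S}^1 \times [0,1] \to L_n$ from $f_0$ to $i \circ g$, rel $\rho$. We may assume after a small perturbation that each loop $f_i := g(C_i)$ lies in the interior of $L_m$.

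The key step is a winding-number argument. Pick a point $p$ in the interior of $\overline{H}_1$ (so $p \notin L_m$), chosen so that $p \in K_D \cap \overline{H}_1$ for some $D \geq 1$. For every $i \geq D$, the loop $\beta_i$ bounds the compact region $K_i \ni p$, so in $H_1(L_n \setminus \{p\})$ its class equals $\pm [\mu_p]$, the generator given by a small loop around $p$. On the other hand, $f_i \subset L_m$ bounds a compact subregion of $L_m$ that is disjoint from $p$, so $[f_i] = 0$ in $H_1(L_n \setminus \{p\})$. Since the class in $H_1(L_n \setminus \{p\})$ is a homotopy invariant, the restriction $F|_{C_i \times [0,1]} \colon C_i \times [0,1] \to L_n$ cannot have image contained in $L_n \setminus \{p\}$; that is, $p$ lies in the image of $F|_{C_i \times [0,1]}$. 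The same argument applies with $p$ replaced by any interior point of $K_D \cap \overline{H}_1$.

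The contradiction now follows quickly. Let $K$ be any nonempty compact subset of the interior of $K_D \cap \overline{H}_1$. For every $i \geq D$ we have $K \subseteq F(C_i \times [0,1])$, so $F^{-1}(K)$ meets $C_i \times [0,1]$ nontrivially for infinitely many pairwise disjoint $C_i$. Hence $F^{-1}(K)$ cannot lie in any compact subset of $\hat{S}^1 \times [0,1]$, violating properness of $F$. Therefore $\gamma \notin \operatorname{image}(i_\ast)$ and $i$ is not a PHE by Brown's theorem. I expect the main point requiring care to be the winding-number/$H_1(L_n \setminus \{p\})$ computation in the key step, which relies crucially on $p$ being in the interior of the capped half-plane $\overline{H}_1$ (so $p \notin L_m$) while lying inside the compact region bounded by $\beta_i$ in $L_n$.
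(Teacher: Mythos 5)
Your overall strategy is the right modification of the preceding proposition's argument: build an element of $\pi_1^\star(L_n,\alpha)$ out of loops that wind around a point $p$ interior to a capped half-plane $\overline{H}_1$, observe that loops in $L_m$ can never do this, and derive a properness contradiction. The endgame (the homotopy $F$ must hit $p$ on infinitely many of the disjoint pieces $C_i\times[0,1]$, so $F^{-1}(p)$ is noncompact) is correct. However, there is a genuine gap in the construction of the loops $\beta_i$. Since $n\ge 1$ (or $n=\infty$), the target $L_n$ has nonempty \emph{noncompact} boundary, and no compact exhaustion of such a surface can have pieces whose frontier is a single circle: a compact piece with single-circle frontier would meet each boundary line of $L_n$ in a set that is both open and closed, hence would be disjoint from $\partial L_n$ altogether, and such pieces cannot exhaust $L_n$. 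In any genuine exhaustion the frontier of $K_i$ consists of arcs (plus possibly circles), so ``$\beta_i$ bounds the compact region $K_i\ni p$'' is unavailable and the computation $[\beta_i]=\pm[\mu_p]$ in $H_1(L_n\setminus\{p\})$ collapses. A second, fixable, error: the claim that $[f_i]=0$ in $H_1(L_n\setminus\{p\})$ because $f_i$ ``bounds a compact subregion of $L_m$'' is false for a general representative $g$ --- the $f_i$ are arbitrary loops in $L_m$ and $H_1(L_m)$ injects into $H_1(L_n\setminus\{p\})$. What is true, and all you need, is that $L_n\setminus\{p\}$ deformation retracts onto $L_m$ with a circle $\mu_p$ attached, so $H_1(L_n\setminus\{p\})\cong H_1(L_m)\oplus\mathbb{Z}[\mu_p]$ and every loop carried by $L_m$ has vanishing $\mu_p$-component.

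The repair is to build the $\beta_i$ by hand rather than from an exhaustion. Model $\overline{H}_1$ as $\{(x,y):y\ge 0\}$ glued along $y=0$, with $p=(0,1)$. Let $\beta_i$ consist of a large arc in $\overline{H}_1$ from $(-R_i,0)$ to $(R_i,0)$ passing over $p$ (with $R_i\to\infty$), closed up by an arc in $L_m$ that crosses $\alpha$ and avoids the $i$-th compact piece of $L_m$. The arc over $p$ together with the segment $[-R_i,R_i]\times\{0\}$ encloses $p$, so $[\beta_i]$ has $\mu_p$-component $\pm1$; the $\beta_i$ escape every compact set (both endpoints of the $\overline{H}_1$-arc march off to the single end along the attaching line), so attaching them to $\alpha$ defines a legitimate class $\gamma\in\pi_1^\star(L_n,\alpha)$. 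With this substitution and the corrected homology statement for the $f_i$, your winding-number comparison and the properness contradiction go through, and only one capped half-plane is needed, which covers both the $m>n$ finite case and the $m=n=\infty$ case.
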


\section{Surfaces properly homotopy equivalent to graphs}
\label{sec:surfacesPHEtograph}

In this section, we prove the reverse direction of \Cref{thm:maincharacterization}:

\begin{proposition} \label{prop:reverse}
    A surface with nonempty boundary and no boundary isolating curve is PHE to a graph.
\end{proposition}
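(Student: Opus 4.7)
The plan is to construct a locally finite graph $\Gamma \subset S$ together with a proper strong deformation retraction of $S$ onto $\Gamma$. The strategy is to cut $S$ along a locally finite system of disjoint curves and arcs into compact pieces, each meeting $\partial S$ nontrivially, to deformation retract every piece onto a compatible finite spine, and then to glue.

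First I would build a locally finite collection $\mathcal{C}$ of pairwise disjoint simple closed curves in $\operatorname{int}(S)$ together with properly embedded arcs having endpoints on noncompact components of $\partial S$, chosen so that each connected component of the closure of $S \setminus \bigcup \mathcal{C}$ is a compact subsurface $P$ with $P \cap \partial S$ nonempty. The hypothesis of no boundary isolating curve, applied through \Cref{prop:isolatingnbhd}, is precisely what makes this possible: whenever we try to peel off a neighborhood of an end by a single compact curve, the neighborhood must either already contain a portion of $\partial S$, or it admits further subdivision by arcs on noncompact boundary components. Starting from a principal exhaustion $K_1 \subset K_2 \subset \cdots$ of $S$ and inductively refining the cuts between consecutive terms so that no leftover compact piece is boundary-free, I obtain such a $\mathcal{C}$.

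On each piece $P$, we have a compact orientable surface with nonempty boundary, which deformation retracts onto a finite 1-complex spine. I would arrange the spine $\Gamma_P$ to contain every cutting curve or arc in $\partial P \cap \mathcal{C}$, with the retraction being the identity on these cuts and collapsing the remaining $\partial S$-portion of $\partial P$ into $\Gamma_P$. Because adjacent pieces share their cutting curves and arcs, and the local retractions restrict to the identity there, they assemble into a global continuous deformation $\rho : S \times [0,1] \to S$ with image $\Gamma := \bigcup_P \Gamma_P$. Local finiteness of $\mathcal{C}$ ensures that $\Gamma$ is a locally finite 1-complex, and that $\rho_1 : S \to \Gamma$ is proper, since the preimage of any compact subset of $\Gamma$ meets only finitely many $P$ and is therefore compact.

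I expect the main obstacle to be the construction of $\mathcal{C}$. The delicate case is an end accumulated only by noncompact boundary components, such as a degenerate end or the unique end of the 1-sliced Loch Ness monster; there one cannot cut off an end-neighborhood by a closed curve without producing a boundary isolating curve, so arcs along $\partial S$ must be woven into the decomposition. The no-boundary-isolating-curve hypothesis is exactly what guarantees such arcs always exist in sufficient abundance to subdivide every end.
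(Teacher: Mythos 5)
Your proposal is correct and follows essentially the same route as the paper: both arguments refine a compact exhaustion so that every compact piece between consecutive stages contains original boundary (which is exactly what the absence of boundary isolating curves guarantees), then deformation retract each piece onto a spine while fixing the cutting curves and arcs (the paper's \Cref{lem:compactdefretraction}), and glue. Your additional remarks on local finiteness, properness of the glued retraction, and the need for arcs meeting noncompact boundary components are consistent with, and slightly more explicit than, the paper's treatment.
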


Our strategy is as follows. Given a surface with no boundary isolating curve, we will\begin{enumerate}
    \item cut the surface into compact pieces via some process, 
    \item deformation retract each piece onto a graph, fixing the cutting curves and arcs, then
    \item glue the pieces back together along the cutting curves and arcs to get a (proper) deformation retract of the entire surface onto a graph.
    \end{enumerate}

    Throughout we will use the term \textit{original boundary} to refer to boundary of a subsurface that is also part of the boundary of the whole surface. For step (2), we will require that each compact piece contains original boundary. This original boundary will be allowed to move during the deformation retraction while the boundary components from the cutting must stay fixed.
    
    It is standard knowledge that a compact surface with nonempty boundary deformation retracts onto a graph, however, we want to emphasize that this deformation retraction can be done in a way that fixes any proper closed subset, so we give the following statement whose proof we leave to the reader. 
     \begin{lemma}\label{lem:compactdefretraction}
Let $S$ be a compact surface with nonempty boundary, and let $I \subsetneq \partial S$ be a closed subset. Then there exists a proper deformation retraction of $S$ to a graph which fixes $I$. 
\end{lemma}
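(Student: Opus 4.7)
The plan is to build a $1$-dimensional subcomplex $\Gamma \subseteq S$ which already contains $I$, and then invoke the standard strong deformation retraction of a finite CW pair onto a deformation retract.

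Choose a finite simplicial structure on $S$ in which $I$ is a subcomplex; this is standard, since in the intended applications $I$ is a finite union of points and arcs on $\partial S$. Because $I \subsetneq \partial S$, there is a boundary $1$-simplex $e_0 \not\subseteq I$, and $e_0$ is a free face of a unique $2$-simplex $\sigma_0$. The elementary collapse of $(\sigma_0, e_0)$ is a strong deformation retraction of $S$ which is the identity outside $\sigma_0$, and in particular fixes $I$ pointwise. Iterate, choosing at each stage a free $1$-face outside $I$ and collapsing it. Since the triangulation is finite, the procedure terminates at a $1$-dimensional subcomplex $\Gamma$, and by construction $I \subseteq \Gamma$. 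Composing the elementary strong deformation retractions produces the desired strong deformation retraction $H \colon S \times [0,1] \to S$ onto $\Gamma$ which pointwise fixes $I$; properness is automatic since $S$ is compact.

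The main technical point is verifying that this iteration never gets stuck before the complex is $1$-dimensional, i.e., that so long as any $2$-simplex remains, a free $1$-face outside of $I$ is available. Were this ever to fail, the $2$-dimensional part of the current complex would be a compact surface-with-boundary whose boundary $1$-simplices all lie in $I$. One rules this out by a careful choice of collapse order --- always favoring free faces adjacent to the open region $\partial S \setminus I$ --- so that the ``non-$I$'' portion of the boundary of the current $2$-dimensional part propagates as the collapse sweeps across $S$ and is only exhausted when no $2$-simplex remains. This is the heart of the argument; the remainder is routine.
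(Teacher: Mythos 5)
The paper leaves this lemma to the reader, so there is no official proof to compare against; your collapsing strategy is a natural way to prove it. The problem is that your write-up stops exactly at the step you yourself flag as ``the heart of the argument'': you assert that a careful choice of collapse order prevents the process from getting stuck, but you never exhibit such an order or prove that it works, so as written this is a genuine gap rather than a routine omission. Fortunately the needed claim is true and requires no cleverness in the ordering: at any stage at which some $2$-simplex survives, there is a free edge not contained in $I$, so an arbitrary greedy collapse succeeds. To see this, let $X_2$ be the union of the surviving closed $2$-simplices and suppose every edge lying in exactly one triangle of $X_2$ were contained in $I\subseteq\partial S$. Then every interior edge of $S$ meeting a triangle of $X_2$ lies in two triangles of $X_2$, and around every interior vertex of $S$ contained in $X_2$ the cyclic fan of triangles is either entirely inside or entirely outside $X_2$ (otherwise two consecutive triangles would exhibit a free interior edge); hence $X_2\cap\operatorname{int}(S)$ is nonempty, open, and closed in the connected set $\operatorname{int}(S)$, forcing $X_2=S$. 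But then the free edges are exactly the edges of $\partial S$, and $I\subsetneq\partial S$ gives a boundary edge outside $I$, a contradiction. Adding this argument turns your sketch into a proof.

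Two smaller points. First, the lemma allows an arbitrary closed subset $I\subsetneq\partial S$ (e.g.\ a Cantor set), which cannot be made a subcomplex; you should first replace $I$ by $I'=\partial S\setminus J$ for a small open arc $J$ contained in the nonempty open set $\partial S\setminus I$, noting that $I'$ is a finite union of circles and closed arcs containing $I$, so a retraction fixing $I'$ fixes $I$. Second, when collapsing across a free edge $e\not\subseteq I$, you should observe that since $I$ is a subcomplex, $I\cap e$ consists at most of endpoints of $e$, so the elementary collapse (which only moves the open triangle and the open edge) really is the identity on $I$.
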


 Note that the proper containment $I \subsetneq \partial S$ is required in \Cref{lem:compactdefretraction}. If there were a deformation retraction of $S$ to a graph which fixes all of $\partial S$, then by collapsing the boundary components of $S$ to points we get a deformation retraction of a closed surface to a graph. This is impossible by \Cref{prop:notgraph_borderless}.

Now we present a proof of the reverse direction of \Cref{thm:maincharacterization} where the first step of our strategy is entirely non-constructive relying only on the condition that ``every end sees boundary".

\begin{proof}[Proof of \Cref{prop:reverse}]
     Suppose $S$ has nonempty boundary and no boundary isolating curves. Let $\{K_i\}_{i=1}^\infty$ be a compact exhaustion of $S$. By enlarging $K_i$'s, we may assume the complementary components of $K_i$ are infinite-type surfaces. We want to show that we can find a new exhaustion $\{K^\prime_i\}_{i=1}^\infty$ so that each component of $K^\prime_{i+1} \setminus K^\prime_i$ contains original boundary. Then by \Cref{lem:compactdefretraction} we can find a proper homotopy equivalence of the closure of each component to a graph that fixes its boundary except for some piece of original boundary. We are then done by gluing together all proper homotopy equivalences into a single proper homotopy equivalence on $S$.
    
    To find this new exhaustion, we begin by setting $K^\prime_1$ to be the first $K_i$ that contains original boundary. Note $S \setminus K^\prime_1$ has finitely many components $U_1^{(1)},\ldots,U_{n_1}^{(1)}$, which are all infinite-type by our choice of $K_i$, and each $U_j^{(1)}$ contains original boundary, otherwise there is a boundary isolating curve. Now we can set $K^\prime_2$ to be the first $K_i$ such that $K_i \cap U_j^{(1)}$ contains original boundary for all $j=1,\ldots,n_1$. Letting the complementary components of $S\setminus K_2'$ be $U_1^{(2)},\ldots,U_{n_2}^{(2)}$, choose $K_3'$ to be the first $K_i$ such that $K_i \cap U_j^{(2)}$ contains original boundary for $j=1,\ldots,n_2$. Doing this process inductively, we obtain the desired exhaustion $\{K_i'\}_{i=1}^\infty$. 
\end{proof}

\begin{remark}
See \Cref{fig:remark} for some basic examples. The deformation retraction has the following effects on ends. Ends accumulated by genus become ends accumulated by loops. A neighborhood of a degenerate end deformation retracts to a tree and thus the degenerate ends become ends not accumulated by loops. 

However, other planar ends of the surface may be mapped to ends accumulated by loops. The two possible cases are ends accumulated by compact boundary components and ends accumulated by boundary chains (see \Cref{fig:boundaryend}) which are both mapped to ends accumulated by loops.  
\end{remark}

\begin{figure}[ht!]
    \centering
    \includegraphics[width=0.8\textwidth]{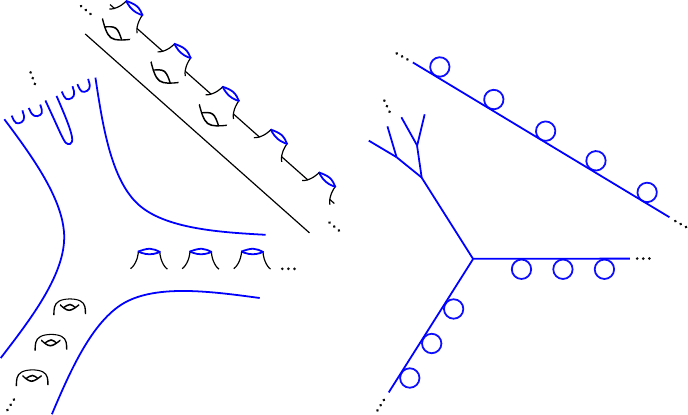}
    \caption{The two surfaces on the left (with boundary components shown in blue) are proper homotopy equivalent to the graphs on the right.}
    \label{fig:remark}
\end{figure}

The cut-and-paste construction of the deformation retraction in the proof above provides the following corollary which we use in the following sections.

\begin{corollary}\label{cor:defretraction_restrict} 
    Let $S$ be a surface with boundary, but without boundary isolating curve, and let $\phi: S \to \G$ be a proper deformation retraction to a locally finite, infinite graph properly homotopy equivalent obtained from \Cref{thm:maincharacterization}. Then there exist compact exhaustions $\{S_i\}$ for $S$ and $\{\G_i\}$ for $\G$ such that for each $i$, the restriction $\phi|_{S_i}:S_i \to \G_i$ is a proper deformation retraction.
\end{corollary}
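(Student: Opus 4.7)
The plan is to reuse the exhaustion produced inside the proof of \Cref{prop:reverse}. Let $\{K_i'\}_{i=1}^\infty$ be the exhaustion of $S$ constructed there, so that each component of $\overline{K_{i+1}' \setminus K_i'}$ is a compact surface containing a nontrivial piece of original boundary, and so that $f$ was built by piecing together the proper deformation retractions supplied by \Cref{lem:compactdefretraction} on $K_1'$ and on each annular layer $P_j := \overline{K_{j+1}' \setminus K_j'}$, each fixing the cutting curves. I would simply set $S_i := K_i'$ and $\G_i := f(S_i)$.

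The first step is to record that these two sequences really are compact exhaustions: each $K_i'$ is compact by construction, so $\G_i$ is a compact subset of $\G$, and it is in fact a subgraph because by \Cref{lem:compactdefretraction} each piece of the construction retracts onto a graph whose set of cutting curves lies inside it, so $\G_i = f(K_1') \cup f(P_1) \cup \cdots \cup f(P_{i-1})$ is a finite union of compact subgraphs glued along the circles coming from the cutting. Since $f$ is a retraction onto $\G$, we have $\bigcup_i \G_i = f\bigl(\bigcup_i S_i\bigr) = f(S) = \G$, and $\G_i \subset \G_{i+1}$.

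The second step is to verify that $f|_{S_i}$ is itself a proper deformation retraction onto $\G_i$. Properness is automatic from compactness of $S_i$. Writing $H \colon S \times [0,1] \to S$ for the ambient homotopy from $\mathrm{id}_S$ to $f$, the key point is that $H$ was assembled from the deformation retractions on $K_1'$ and on the $P_j$'s, each of which stays inside the piece where it was defined (and fixes the shared cutting curves on the common boundary). Consequently $H(K_1' \times [0,1]) \subset K_1'$ and $H(P_j \times [0,1]) \subset P_j$ for every $j$, hence $H(S_i \times [0,1]) \subset S_i$. Restricting $H$ to $S_i \times [0,1]$ therefore gives a deformation retraction of $S_i$ onto $\G_i$ with $H_0 = \mathrm{id}_{S_i}$ and $H_1 = f|_{S_i}$.

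The only real subtlety is checking that the shared cutting curves sit inside each $\G_j$, so that the $\G_i$ are genuine subcomplexes and the piecewise homotopies assemble continuously; this is baked into the hypothesis of \Cref{lem:compactdefretraction} that the deformation retraction fixes a prescribed closed subset of the boundary, so there is no obstacle beyond bookkeeping.
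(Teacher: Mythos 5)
Your proposal is correct and matches the paper's intent exactly: the paper gives no separate argument for this corollary, simply noting that the cut-and-paste construction in the proof of \Cref{prop:reverse} yields it, and your choice of $S_i = K_i'$ and $\G_i = f(S_i) = \G \cap S_i$ together with the observation that the glued homotopy preserves each piece (hence each $S_i$) is precisely that argument made explicit. The only cosmetic quibble is calling the layers $\overline{K_{j+1}'\setminus K_j'}$ ``annular''---their components are general compact surfaces with boundary---but this does not affect the proof.
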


\section{Dehn--Nielsen--Baer maps}
\label{sec:DNB} 
Consider the surfaces properly homotopy equivalent to graphs, which are characterized as the surfaces without boundary isolating curves by \Cref{thm:maincharacterization}. For such surface-graph pairs, we can establish a map between their mapping class groups as follows.

\subsection{The mapping class group}

Let $(S,\G)$ be a surface-graph pair such that there is a proper homotopy equivalence $\phi:S \to \G$ with a proper inverse $\psi: \G \to S$. Then we have a map
\[
    \Theta: \Map(S) \longrightarrow \Map(\G), \qquad [f] \mapsto [\phi f \psi],
\]
which is defined to complete the following commutative diagram given a homeomorphism $f: S \to S$:
\[
    \begin{tikzcd}
        \G \rar[dashed] \dar{\psi} & \G \dar{\psi} \\
        S \rar{f} & S
    \end{tikzcd}
\]

We now establish the basic properties of the map $\Theta$ as listed in \Cref{thm:MainTHM_DNB} proving the theorem over many propositions. Recall \Cref{prop:InducedMapsOnEnds} implies that a PHE $\phi:S \to \G$ induces a homeomorphism between the end spaces of $S$ and $\G$. 

Moreover, we have an induced map between their mapping class groups:

\begin{proposition} \label{prop:HomMappingClassGroups}
    The map $\Theta: \Map(S) \to \Map(\G)$ is a well-defined homomorphism.
    Moreover, the map $\Theta$ has image in $\PMap(\G)$. In other words, we can write $\Theta: \Map(S) \to \PMap(\G)$.
\end{proposition}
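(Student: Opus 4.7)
The plan is to verify three assertions in sequence: (i) for each $f \in \Homeo(S, \partial S)$, the composite $\phi f \psi$ lies in $\PHE(\G)$; (ii) the assignment $[f] \mapsto [\phi f \psi]$ descends to a well-defined, multiplicative map on mapping classes; and (iii) its image lies in $\PMap(\G)$.

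Step (i) is immediate: $\phi f \psi$ is a composition of proper maps (a homeomorphism being automatically proper), and $\phi f^{-1} \psi$ serves as a proper homotopy inverse, using the relations $\phi \psi \simeq_p \id_\G$ and $\psi \phi \simeq_p \id_S$ (with $\simeq_p$ denoting proper homotopy).

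For step (ii), well-definedness follows because an isotopy $H_t$ rel $\partial S$ between homeomorphisms is a proper homotopy (this is standard for isotopies of homeomorphisms of locally compact Hausdorff spaces), so $\phi \circ H \circ (\psi \times \id_{[0,1]})$ is a proper homotopy between $\phi f_0 \psi$ and $\phi f_1 \psi$. Multiplicativity will follow from the chain
\[
\Theta([f])\, \Theta([g]) = [\phi f \psi \phi g \psi] = [\phi f g \psi] = \Theta([fg]),
\]
where the middle equality is obtained by inserting a proper homotopy $\psi \phi \simeq_p \id_S$ in the interior.

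For step (iii), the key computation, via \Cref{prop:InducedMapsOnEnds}, is
\[
E(\phi f \psi) = E\phi \circ Ef \circ E\psi.
\]
Since $\phi \psi \simeq_p \id_\G$, the same proposition gives $E\phi \circ E\psi = \id_{E(\G)}$, so it suffices to show $Ef = \id_{E(S)}$. This is the substantive point of the proof: since $S$ has no boundary isolating curve, \Cref{prop:isolatingnbhd} ensures that every end $e$ of $S$ is accumulated by $\partial S$. Hence one can find a sequence $x_n \in \partial S$ converging to $e$ in the end compactification; as $f$ fixes $\partial S$ pointwise, $f(x_n) = x_n$ also converges to $f(e)$, and Hausdorffness of the end compactification forces $f(e) = e$. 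Therefore $\Theta([f])$ acts trivially on $E(\G)$ (and so certainly preserves $E_\ell(\G)$), so $\Theta([f]) \in \PMap(\G)$. The only step requiring more than formal manipulation is this end-fixing observation; the rest reduces to inserting and removing proper homotopies of the form $\psi \phi \simeq_p \id_S$ and $\phi \psi \simeq_p \id_\G$ together with the functoriality of the end space.
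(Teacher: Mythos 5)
Your proposal is correct and follows essentially the same route as the paper's proof: isotopies are proper homotopies (well-definedness), the identity $\psi\phi\simeq_p\id_S$ gives multiplicativity, and the absence of boundary isolating curves forces every end to be accumulated by $\partial S$, so a boundary-fixing homeomorphism induces the identity on $E(S)$ and hence on $E(\G)$. Your additional details (checking $\phi f\psi\in\PHE(\G)$ explicitly, and the sequence/Hausdorffness argument for $Ef=\id_{E(S)}$) only make explicit what the paper leaves implicit.
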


\begin{proof}
    First we show $\Theta$ is well defined. Suppose $f$ and $g$ are isotopic homeomorphisms on $S$. Then $[f]=[g] \in \Map(S)$. Since an isotopy is a proper homotopy, it follows that $\phi f \psi$ and $\phi g \psi$ are properly homotopic as PHEs on $\G$. Hence, $[\phi f \psi] = [\phi g \psi]$ in $\Map(\G)$, concluding the well-definedness.

    We obtain that $\Theta$ is a homomorphism by observing that for $[f],[g] \in \Map(S)$:
    \[
        \Theta([f])\Theta([g]) = [\phi f \psi \phi g \psi] = [\phi fg \psi] = \Theta([fg]),
    \]
    where in the second equality we used that $\psi \phi$ is properly homotopic to the identity.

    To show the moreover statement, recall $S$ has no boundary isolating curves, so every neighborhood of every end of $S$ has boundary. Because $[f] \in \Map(S)$ fixes boundary pointwise, it induces the identity map on $E(S)$. By functoriality of $E$ (\Cref{prop:InducedMapsOnEnds}),  $\Theta([f])$ must induce the identity map on $E(\G)$ as well, i.e. $\Theta([f]) \in \PMap(\G)$.
\end{proof}

We note the following proposition which we use in the next section. 
\begin{proposition}\label{prop:conjugate}
    Let $\phi,\phi': S \to \G$ be proper homotopy equivalences, and let $\phi_*,\phi'_*:\Map(S) \to \PMap(\G)$ be the induced homomorphisms. Then $\phi_*(\Map(S))$ and $\phi'_*(\Map(S))$ are conjugate in $\PMap(\G)$.
\end{proposition}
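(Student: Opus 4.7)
The plan is to exhibit the conjugating element explicitly from the data of the two PHEs. Fix proper homotopy inverses $\psi, \psi' : \G \to S$ of $\phi$ and $\phi'$ respectively, and set
\[
    \eta := [\phi'\psi] \in \Map(\G).
\]
First I would check $\eta$ really defines an element of $\Map(\G)$, i.e.\ a PHE with proper homotopy inverse; the candidate inverse is $[\phi \psi']$, since
\[
    (\phi'\psi)(\phi\psi') \simeq \phi'(\psi\phi)\psi' \simeq \phi' \psi' \simeq \id_\G,
\]
using $\psi\phi \simeq \id_S$ and $\phi'\psi' \simeq \id_\G$, and symmetrically for the other composition.

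Next I would run the core computation. For any $[f] \in \Map(S)$, using $\psi\phi \simeq \id_S$ twice,
\[
    \eta \cdot \phi_*([f]) \cdot \eta^{-1} \;=\; [\phi'\psi \cdot \phi f \psi \cdot \phi \psi'] \;\simeq\; [\phi'(\psi\phi) f (\psi\phi)\psi'] \;\simeq\; [\phi' f \psi'] \;=\; \phi'_*([f]).
\]
Taking this over all $[f] \in \Map(S)$ yields $\eta \cdot \phi_*(\Map(S)) \cdot \eta^{-1} = \phi'_*(\Map(S))$, giving conjugacy inside $\Map(\G)$.

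The main obstacle is to upgrade this to conjugacy inside $\PMap(\G)$, since in general $\eta$ acts on $E(\G)$ as $E\phi' \circ (E\phi)^{-1}$, which is not the identity. The key observation is that this end homeomorphism preserves the subspace $E_\ell(\G)$: both $\phi$ and $\phi'$ identify $E_\ell(\G)$ with the same ``loop-type'' subset of $E(S)$ (namely the ends of $S$ whose neighborhoods carry infinitely many nontrivial loops in any graph PHE-model). Therefore, by the PHE classification of infinite graphs (\Cref{thm:PHEclassification}), this end-pair homeomorphism extends to a self-PHE $\xi: \G \to \G$, and $\eta \xi^{-1}$ then acts trivially on $E(\G)$, so $[\eta \xi^{-1}] \in \PMap(\G)$. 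The delicate step is to verify that, with $\xi$ chosen appropriately, the modified element $\eta \xi^{-1}$ still realizes the conjugacy; equivalently, that $\xi$ can be chosen to normalize $\phi_*(\Map(S))$. Since $\PMap(\G)$ is normal in $\Map(\G)$, the weaker statement---that $\phi_*(\Map(S))$ and $\phi'_*(\Map(S))$ are related by an inner automorphism of $\Map(\G)$ restricting to $\PMap(\G)$---falls out of the computation above with no further work, which is the content of the proposition.
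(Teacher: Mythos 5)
Your core computation is exactly the paper's proof: the paper writes $\phi_*([f]) = [\phi\psi']\cdot\phi'_*([f])\cdot[\phi'\psi]$ and simply observes that $[\phi\psi']^{-1}=[\phi'\psi]$ in $\PMap(\G)$, so up to swapping the roles of $\phi$ and $\phi'$ your conjugator and verification coincide with theirs. The issue you raise in your final paragraph --- that $[\phi'\psi]$ acts on $E(\G)$ as $E\phi'\circ(E\phi)^{-1}$, which need not be the identity --- is a fair observation that the paper passes over by assertion; however, your proposed repair does not close it. Invoking \Cref{thm:PHEclassification} produces a self-PHE $\xi$ realizing the end-pair homeomorphism, but you never verify that $\xi$ normalizes $\phi_*(\Map(S))$ (this is exactly the ``delicate step'' you name and then skip), and without it $\eta\xi^{-1}$ is not known to conjugate one image onto the other. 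Your closing sentence then retreats to conjugacy by an element of $\Map(\G)$, which is formally weaker than ``conjugate in $\PMap(\G)$'' as stated. That weaker conclusion is what the displayed computation actually delivers, and it is all that is needed in the paper's one application (\Cref{prop:doubleflux}), since conjugation by any element of $\Map(\G)$ only permutes flux coordinates and so preserves the even flux subgroup; but you should be clear that you have reproduced the paper's argument rather than supplied the missing justification that the conjugating class is pure.
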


\begin{proof}
Fix $\psi$ and $\psi'$ proper homotopy inverses of $\phi$ and $\phi'$ respectively.
Then for any $[f] \in \Map(S)$,
\[
    \phi_*([f]) = [\phi f \psi] = [\phi \psi' \left(\phi' f \psi'\right) \phi'  \psi] = [\phi\psi'] \cdot \phi'_*([f]) \cdot [\phi' \psi],
\]
where we observe $[\phi\psi']^{-1} =[\phi'\psi]$ in $\PMap(\G)$.
\end{proof}

Our next goal is to understand the kernel of $\Theta$. We start with a warm-up example with a finite-type surface with compact boundary which demonstrates our \Cref{thm:MainTHM_DNB} generalizes the classical Dehn--Nielsen--Baer Theorem (\Cref{thm:classic_DNB}). 
\begin{example}[Dehn--Nielsen--Baer Theorem for finite-type $S$ with $\partial S\neq \emptyset$]
\label{exa:DNBCompactI}
   Consider a genus $g$ surface $S$ with $b>0$ boundary components. Let $* \in \partial S$ be a basepoint for $\pi_1(S, *) \cong F_n$. Since mapping class group representatives fix the boundary pointwise, for a given element \( [f] \in \mathrm{Map}(S) \) we can define the automorphism $f_\star  \in \Aut(F_n)$ by
$f_\star : \alpha \mapsto  f(\alpha)$,
 and let $[f_\star]$ be the corresponding outer automorphism. The Dehn--Nielsen--Baer map $\Delta:\Map(S) \to \Out(F_n)$ sends $[f]$ to $[f_{\star}]$.   
   
   To see this in the context of our DNB map induced from proper homotopy equivalence, let $\psi: R_n \hookrightarrow S$ be an inclusion of a rose graph $R_n$ to $S$, and let $\phi:S \to R_n$ be a deformation retraction. For simplicity, assume $* \in R_n$. Let $\phi_\star$ be the induced map $\pi_1(S, *) \to \pi_1(R_n, *)$ and similarly for $\psi_\star$. By the choice of PHE and basepoint, we can conflate elements in $\pi_1(S, *)$ with elements in $\pi_1(R_n, *)$ or in $F_n \cong \pi_1(S, *)$. In particular, we can view $\phi_\star\psi_\star$ as the identity automorphism of $F_n$.
   
   Then as graphs are $K(G,1)$, it is well known we have the isomorphism $r:\Map(R_n) \cong \Out(F_n)$. To illustrate this, we define $g_\star \in \Aut(\pi_1(R_n, *)) \cong \Aut(F_n)$ for a given PHE $g: R_n \to R_n$ as $g_\star : \alpha \mapsto  \gamma g(\alpha)\gamma^{-1},$ where $\gamma$ is some path from $*$ to $g(*)$. This is unique when considered up to inner automorphism of $\pi_1(R_n,\ast)$, so let $[g_\star]$ be the corresponding unique outer automorphism. Now $r$ maps $[g]$ to $[g_\star]$.
    
   Let $\Theta$ be the map $\Map(S) \rightarrow \Map(R_n)$ induced by the (proper) homotopy equivalences $\phi$ and $\psi$.  Then we claim the composition $r \circ \Theta$ is in fact the Dehn--Nielsen--Baer map $\Delta:\Map(S) \to \Out(F_n)$ we have considered above. Indeed, for a mapping class $[f] \in \Map(S)$, we have the following sequence of equalities in $\Out(F_n)$: \[
    (r \circ \Theta)([f]) = r([\phi f \psi]) = [(\phi f \psi)_\star] = [\phi_\star f_\star \psi_\star] = [f_\star] = \Delta([f]),
\]
where the second to the last equality follows from $\psi_\star = \phi_\star^{-1}$ in $\Aut(F_n)$ and conjugating the inside of $[\phi_\star f_\star \psi_\star]$ by $\phi_\star$.

 The map $\Delta$ factors through the similarly defined Dehn--Nielsen--Baer map from \Cref{thm:classic_DNB}, so the kernel is also given by the subgroup generated by Dehn twists about the boundary. This agrees with our \Cref{thm:MainTHM_DNB} since $r \circ \Theta = \Delta$.\end{example}

The main tools for the remainder of the section are based on the often used Alexander methods for surfaces (see \cite{hern2017isomorphisms,Shapiro2022Alexander, dickmann2023mapping} for information on the infinite-type case), and Alexander methods for graphs by Algom-Kfir--Bestvina as in the following lemma.

Recall a (closed) curve is an immersed image of $S^1$, an arc in a surface is a properly immersed image of $[0,1]$ with endpoint on the boundary of the surface, and a line is a properly immersed image of $\R$. 
\begin{lemma} [{\cite[Corollary 3.6]{AB2025}}] \label{lem:AB_Alexander}
    Let $f : \Gamma \rightarrow \Gamma$ be proper. Then $f$ is properly homotopic to the identity if and only if it preserves the homotopy class of every oriented closed curve and the proper homotopy class of every oriented proper line in $\Gamma$ that in each direction converges to an end in $E(\Gamma) \setminus E_\ell(\Gamma).$
\end{lemma}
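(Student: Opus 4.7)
The forward direction is immediate, since a proper homotopy from $f$ to the identity carries oriented closed curves to freely homotopic oriented curves and oriented proper lines to properly homotopic oriented lines. For the converse my plan is a two-stage argument, following the familiar decomposition of an infinite graph into a "core" (controlled by closed curves) together with forest branches reaching the ends in $E(\Gamma)\setminus E_\ell(\Gamma)$ (controlled by proper lines): first produce an ordinary homotopy from $f$ to the identity using the closed-curve data, then upgrade it to a proper homotopy using the proper-line data.

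For the first stage, fix a basepoint $v \in \Gamma$ and a path from $v$ to $f(v)$, so that $f$ induces an automorphism $\alpha$ of $\pi_1(\Gamma,v)$. Since oriented free homotopy classes of closed curves correspond to conjugacy classes in $\pi_1$, the hypothesis forces $\alpha$ to be class-preserving, i.e.\ to send each element to a conjugate of itself. For a free group of at most countable rank every class-preserving automorphism is inner (one reduces to the classical finitely generated case by restricting to the subgroup generated by any finite subset of a free basis), so after composing with a further basepoint-change path I may assume $f$ fixes $v$ and induces the identity on $\pi_1(\Gamma,v)$. Since $\Gamma$ is a $K(\pi_1(\Gamma),1)$, a standard obstruction-theoretic argument on the $1$-complex then produces an ordinary (not yet proper) homotopy $H_0$ from $f$ to $\mathrm{id}_\Gamma$.

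For the second stage, fix a connected compact exhaustion $\{K_i\}$ of $\Gamma$ and inductively modify $H_0$ on each annular region $A_i = K_{i+1}\setminus \mathrm{int}(K_i)$ so that its image stays in a bounded neighborhood of $A_i$. In a complementary component of $K_i$ whose end $e$ lies in $E_\ell(\Gamma)$, there are small closed curves arbitrarily deep in the component, and the closed-curve hypothesis lets one anchor the homotopy using any such curve. In a complementary component whose end $e \notin E_\ell(\Gamma)$, the component is essentially tree-like far out, and I would pick a proper ray $\rho_e$ to $e$; combining $\rho_e$ with a ray to another non-loop end (or to a different non-loop direction) produces a proper line in the allowed class, and the hypothesis forces $f\circ \rho_e$ to be properly homotopic to $\rho_e$, anchoring the homotopy along the tree branch.

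The main obstacle is precisely this second stage: organizing the compactly supported corrections on successive annuli $A_i$ into a single continuous proper homotopy of $\Gamma$, so that at every end the total track of any point remains bounded. The dichotomy in the hypothesis is essential here, since lines into ends in $E_\ell(\Gamma)$ need not have well-defined proper homotopy classes (homotoping across a loop changes the class), so loop ends must instead be handled by the closed-curve data, while non-loop ends---being locally tree-like---are precisely where proper rays become the rigid objects that rule out the homotopy sliding off to infinity.
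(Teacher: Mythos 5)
First, note that the paper does not prove this lemma at all: it is quoted verbatim from Algom-Kfir--Bestvina \cite[Corollary 3.6]{AB2021} and used as a black box. So there is no in-paper argument to compare yours against; what follows is an assessment of your sketch on its own terms.

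Your forward direction and the overall two-stage architecture (closed curves control the core, proper lines control the tree-like pieces reaching ends in $E(\Gamma)\setminus E_\ell(\Gamma)$) match the spirit of the argument in \cite{AB2021}. But there are two genuine gaps. The smaller one is in stage one: a class-preserving automorphism of an infinitely generated free group does not restrict to an automorphism of the subgroup generated by a finite subset of the basis (the conjugators can involve other generators), so the reduction to Grossman's finitely generated case is not the one-line step you describe. The statement is still true and the normal-form argument adapts, but as written the reduction does not work. The serious gap is stage two, which is the entire content of the lemma. You produce an ordinary homotopy $H_0$ from $f$ to the identity and then propose to correct it annulus by annulus, but you never say what ``anchoring the homotopy'' means, why the correction on $A_i$ can be chosen compatibly with the correction already made on $A_{i-1}$, or why the resulting tracks stay uniformly bounded near each end. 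Near an end in $E_\ell(\Gamma)$ the point is that the track of the homotopy can be pushed into loops arbitrarily deep in the component, and near an end not in $E_\ell(\Gamma)$ one must rule out the homotopy sliding off to infinity along a branch of the tree; making either of these precise, and gluing the local corrections into one continuous proper homotopy, is exactly where the proof in \cite{AB2021} does its real work (via their Alexander-method propositions preceding Corollary 3.6). As it stands, your proposal identifies the main obstacle and stops there, so it is an outline rather than a proof; since the paper is entitled to cite the result, the practical recommendation is to keep the citation rather than attempt a self-contained argument.
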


We will also need a version for surfaces that relies on an Alexander system on a surface with noncompact boundary components.

\begin{lemma}[{See also \cite[Theorem 6.8]{dickmann2023mapping}}] \label{lem:twists_regular} 
Let $S$ be any surface with nonempty boundary. If $[f] \in \Map(S)$ fixes every oriented closed curve up to homotopy, it is in the subgroup topologically generated by Dehn twists about the compact boundary components and Dehn twists about the degenerate boundary chains. 
\end{lemma}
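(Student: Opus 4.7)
The plan is to reduce to the finite-type Alexander method via a carefully chosen compact exhaustion, and then show that only the ``allowed'' Dehn twists can survive. Choose an exhaustion $\{S_i\}_{i\ge 1}$ of $S$ by $\pi_1$-injectively embedded compact subsurfaces so that each boundary curve of each $S_i$ lying in the interior of $S$ either bounds a closed neighborhood of a degenerate boundary chain of $S$ (in which case the complementary region in $S$ is simply connected), or else bounds an infinite-type subsurface of $S$ that contains an essential simple closed curve of $S$ crossing that cutting curve. Since $[f]$ fixes every oriented simple closed curve of $S$ up to free homotopy, each interior cutting curve is preserved with its orientation up to isotopy, so for each $i$ we may pick a representative $f_i \in [f]$ with $f_i(S_i)=S_i$.

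Next, apply the finite-type Alexander method to $f_i|_{S_i}$. Because $f_i|_{S_i}$ preserves every oriented simple closed curve of $S_i$ up to homotopy, in particular a finite filling system, the finite-type Alexander method for compact surfaces with (possibly noncompact-boundary) corners, in the form of \cite[Theorem 6.8]{dickmann2023mapping}, writes $f_i|_{S_i}$, up to isotopy rel $\partial S \cap S_i$, as a finite product of Dehn twists about the circle components of $\partial S_i$. Decompose this product as $D_i^{\mathrm{cpt}}\cdot D_i^{\mathrm{deg}}\cdot D_i^{\mathrm{sep}}$ according to whether each twist core is a compact boundary component of $S$, a curve surrounding a degenerate boundary chain of $S$, or an interior cutting curve of the third (``essential'') type. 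For any factor $T_\gamma^{k}$ in $D_i^{\mathrm{sep}}$, our construction furnishes an essential simple closed curve $\alpha$ in $S$ with $i(\alpha,\gamma)\ge 1$; using $\pi_1$-injectivity of $S_i\hookrightarrow S$, a nonzero $k$ would give $i(\alpha, T_\gamma^{k}\alpha)>0$ and hence change the free homotopy class of $\alpha$ in $S$, contradicting the hypothesis on $f$. Hence $D_i^{\mathrm{sep}}$ is trivial and $f_i|_{S_i}$ is isotopic to the finite product $D_i^{\mathrm{cpt}}\cdot D_i^{\mathrm{deg}}$ of allowed generators.

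Letting $i\to\infty$, the finite products $D_i^{\mathrm{cpt}}\cdot D_i^{\mathrm{deg}}$ converge to $[f]$ in the compact-open topology on $\Map(S)$: for any compact $K\subset S$, once $S_i \supset K$ the product agrees with $f_i$ on $K$ and $[f_i]=[f]$. Therefore $[f]$ lies in the topological closure of the subgroup generated by Dehn twists about compact boundary components and about degenerate boundary chains, as desired. The step I expect to be the main obstacle is the second one: running the finite-type Alexander method on $S_i$ when $\partial S_i$ contains arcs of the noncompact boundary of $S$, and matching its conclusion to the three-type decomposition above. This is exactly the setting of \cite[Theorem 6.8]{dickmann2023mapping}, so the cleanest route is to verify that the filling system furnished by our exhaustion meets its hypotheses (in particular that the curves jointly rigidify the noncompact boundary arcs inside each $S_i$).
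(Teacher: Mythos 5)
Your overall strategy (exhaust by compact pieces, run the finite-type Alexander method on each piece, kill the disallowed boundary twists by intersection with curves of $S$, and pass to the limit in the topology of $\Map(S)$) is genuinely different from the paper's argument, which instead fills in the degenerate ends to form $S^{\deg}$, excises open annular neighborhoods $N$ of the compact boundary components, applies the Alexander method once globally on $S^{\deg}\setminus N$, and reads off $f$ as an infinite product of twists supported in the annuli of $N$. Your route could in principle work, but as written it has two genuine gaps, both located at the step you yourself flag as the main obstacle.

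First, the hypothesis concerns only \emph{curves}, and at two separate points you silently need control of \emph{arcs}. To isotope $f$ so that $f_i(S_i)=S_i$ you must fix, up to isotopy, the entire frontier of $S_i$, which in the presence of noncompact boundary components of $S$ consists of arcs as well as circles; and to run any Alexander method on $S_i$ rel $\partial S_i$ you need a \emph{filling} system, which for a surface with boundary necessarily contains arcs (curves alone do not cut $S_i$ into disks, and on low-complexity pieces there may be no essential simple closed curves at all). The deduction ``$f$ fixes every curve $\Rightarrow$ $f$ fixes every arc'' is true only up to a homotopy moving endpoints along $\partial S$, and that slack is precisely the mechanism by which Dehn twists about compact boundary components survive into the conclusion; the paper's proof spends most of its effort managing exactly this (deriving the arc statement, then using the annuli $N$ to upgrade ``homotopic moving endpoints'' to ``homotopic rel boundary'' in $S^{\deg}\setminus N$). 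Your proposal needs an explicit version of this step.

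Second, your trichotomy of twist cores is incomplete. The circle components of $\partial S_i$ are not only compact boundary components of $S$, degenerate-chain circles, and interior essential cutting circles: when $S$ has noncompact boundary, $\partial S_i$ also has components that are circles built from alternating arcs of $\partial S$ and cutting arcs. The Alexander method on $S_i$ rel $\partial S_i$ produces a boundary-parallel Dehn twist for \emph{each} boundary component, including these mixed ones, and such a twist core is none of your three types. Some of these are killed by your intersection argument (when the region of $S$ beyond the component carries an essential curve crossing it), but others cut off regions with no essential curves crossing them (e.g.\ neighborhoods of degenerate ends reached through arcs of $\partial S$), and for those you must argue either that the twist is an allowed degenerate-chain twist or that it is trivial in $\Map(S)$. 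Without this case analysis the decomposition $D_i^{\mathrm{cpt}}\cdot D_i^{\mathrm{deg}}\cdot D_i^{\mathrm{sep}}$ does not account for all the output of the Alexander method. (A smaller point: the isotopy identifying $f_i|_{S_i}$ with the twist product should be taken rel the frontier of $S_i$, not rel $\partial S\cap S_i$, so that the comparison map lies in $U_K$; as stated the limit argument conflates ``agrees on $K$'' with ``isotopic on $S_i$''.)
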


\begin{proof}

Note the Alexander method for surfaces without boundary from \cite{hern2017isomorphisms} shows that $[f]$, when considered as a homeomorphism on the interior, is the identity element for the mapping class group of the interior, but we need more to show the lemma. However, if we also fix all arcs up to homotopy (relative to the boundary), then the general Alexander method from \cite{dickmann2023mapping} shows $[f]$ is the identity. We will use this to show there is some representative $f$ that is the identity on a subsurface, then achieve the lemma by examining the mapping class group of the complementary surfaces.

   To simplify, fill in the degenerate ends of $S$ to turn each degenerate chain into a compact boundary component and call the resulting surface $S^{\deg}$. Note that $\Map(S)$ and $\Map(S^{\deg})$ are naturally isomorphic, so we conflate $[f]$ with an element of $\Map(S^{\deg})$.
   Since $f$ fixes every curve of $S^{\deg}$ up to homotopy and fixes the boundary pointwise, we have that $f$ must also fix every arc $\alpha$ up to homotopy possibly moving the endpoints. A homotopy that moves the endpoints may untwist an arc if it lies on a compact boundary component.
  Let $N$ be the union of open regular neighborhoods of the (compact) boundary components of $S^{\deg}$ so that $S^{\deg} \setminus N$ is homeomorphic to $S^{\deg}$. Note we can homotope $f$ so that it preserves components of $N$ and restricts to a homeomorphism on $S^{\deg} \setminus N$ fixing the boundary pointwise. Since we can homotope $f(\alpha)$ to $\alpha$ possibly moving the endpoints, we can compose this homotopy with an additional homotopy in $N$ to ensure that $f(\alpha)$ and $\alpha$ be homotopic as arcs in $S^{\deg} \setminus N$ by a homotopy fixing the boundary. Now since $f$ fixes every curve and arc of $S \setminus N$ up to homotopy, we apply the Alexander method for surfaces with boundary from \cite{dickmann2023mapping} in $S^{\deg} \setminus N$ to show $f$ is homotopic to the identity. Therefore, after homotopy, $f$ is supported in $N$. The components of $N$ are annuli, so it follows that $f$ can be written as an infinite product of twists in these annuli in $S^{\deg}$, so we are done. 
\end{proof}

We are now ready to prove the first part of \Cref{thm:MainTHM_DNB}.
\begin{theorem} \label{thm:kernel_regular}
     Let $S$ be a surface PHE to a graph $\G$. Then the kernel of $\Theta: \Map(S) \rightarrow \PMCG(\G)$ is topologically generated by Dehn twists about the compact boundary components. 
\end{theorem}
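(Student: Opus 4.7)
I plan to prove the two containments separately. For the easy direction $\overline{\langle T_c\rangle} \subseteq \ker\Theta$: fix a compact boundary component $c$ with compact annular collar $N$. The twist $T_c$ is supported in $N$, acts trivially on $\pi_1(N) \cong \Z$, and fixes $\partial N$ pointwise; its image $\phi(N) \subset \Gamma$ is a compact subgraph, hence a $K(\pi,1)$. Therefore $\phi T_c|_N$ and $\phi|_N$ are two maps $N \to \phi(N)$ that agree on $\partial N$ and induce the same map on $\pi_1$, so they are homotopic rel $\partial N$; extending by the identity outside $N$ gives a proper homotopy $\phi T_c \sim \phi$, and hence $\Theta([T_c]) = [\phi T_c \psi] = [\phi\psi] = [\id_\Gamma]$. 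Since $\Theta$ is continuous, $\ker\Theta$ is closed and contains the closure $\overline{\langle T_c\rangle}$.

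For the reverse containment, let $[f] \in \ker\Theta$, so $\phi f \psi$ is properly homotopic to $\id_\Gamma$. The closed-curve half of \Cref{lem:AB_Alexander} implies $\phi f \psi$ fixes every free homotopy class of closed curves in $\Gamma$, so for any oriented curve $\alpha$ in $S$, pulling back through $\psi\phi \sim \id_S$ yields $\phi(f(\alpha)) \sim \phi(\alpha)$ in $\Gamma$; since $\phi$ induces a bijection on conjugacy classes in $\pi_1$, we conclude $f(\alpha) \sim \alpha$ in $S$. Thus $f$ fixes every oriented closed curve up to homotopy, and \Cref{lem:twists_regular} places $[f]$ in $\overline{\langle T_c, T_d\rangle}$, where $c$ ranges over compact boundary components and $d$ ranges over degenerate boundary chains with at least two ends (the condition under which $T_d$ is defined).

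Next I would use the proper-line half of \Cref{lem:AB_Alexander}. By the remark following \Cref{prop:reverse}, the image under $\phi$ of a degenerate end of $S$ is an end of $\Gamma$ not accumulated by loops, so for any proper line $\ell$ in $S$ between two degenerate ends, $\phi\ell$ is a proper line in $\Gamma$ with both ends in $E(\Gamma) \setminus E_\ell(\Gamma)$. By \Cref{lem:AB_Alexander}, $\phi f \psi$ preserves the proper homotopy class of $\phi\ell$, and the usual pullback gives that $f$ preserves the proper homotopy class of $\ell$. For each degenerate chain $d$ with distinct ends $e, e' \in d$, fix a proper line $\ell_d$ from $e$ to $e'$ contained in a small neighborhood $U_d$ of $d$ and crossing the twist curve $\gamma_d$ encircling $d$; arrange the $U_d$ to be pairwise disjoint and disjoint from fixed supports of the $T_c$. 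Since $S$ is second countable there are only countably many chains, and the $U_d$ can be chosen so that any compact $K \subset S$ meets only finitely many of them. By construction each $T_c$ and each $T_{d'}$ with $d' \neq d$ fixes the proper homotopy class of $\ell_d$, while $T_d$ acts on $\ell_d$ as an infinite-order twist.

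Because these generators have disjoint supports they commute, and $\langle T_c, T_d\rangle \cong \langle T_c\rangle \oplus \langle T_d\rangle$, so every element splits uniquely as $g = g_\partial \cdot g_{\deg}$ with $g_\partial \in \langle T_c\rangle$ and $g_{\deg} \in \langle T_d\rangle$; the $T_d$-exponent of $g_{\deg}$ is read off from the twisting of $\ell_d$ under $g$. Writing $[f] = \lim[g_n]$ in $\overline{\langle T_c, T_d\rangle}$ with $g_n = (g_n)_\partial \cdot (g_n)_{\deg}$, the constraint that $f$ fixes each $\ell_d$ together with local finiteness forces the $T_d$-exponent of $(g_n)_{\deg}$ to vanish for $n$ large whenever the support of $T_d$ meets a given compact $K$; hence $(g_n)_{\deg} \to \id$ in $\Map(S)$, so $(g_n)_\partial = g_n \cdot (g_n)_{\deg}^{-1} \to [f]$, placing $[f]$ in $\overline{\langle T_c\rangle}$. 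The main obstacle is this last topological step, promoting the pointwise constraints ``$f$ fixes $\ell_d$'' into the closure statement; it is handled by commutativity and local finiteness of the generators. An alternative packaging is to construct for each chain $d$ a continuous homomorphism $\lambda_d \colon \overline{\langle T_c, T_d\rangle} \to \Z$ via algebraic intersection of $g(\ell_d)$ with a transverse test line, and then identify $\ker\Theta \cap \overline{\langle T_c, T_d\rangle}$ with $\bigcap_d \ker \lambda_d = \overline{\langle T_c\rangle}$.
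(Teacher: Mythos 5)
Your overall route is the same as the paper's: pull back curve classes through $\psi\phi\simeq\id_S$ to see that a kernel element fixes every oriented curve of $S$ up to homotopy, invoke \Cref{lem:twists_regular}, and then use proper lines to and from degenerate ends together with \Cref{lem:AB_Alexander} to exclude the degenerate-chain twists. Your treatment of the last step is actually more careful than the paper's (which only observes that a single nontrivial power of a degenerate-chain twist moves some such line); your "read off the $T_d$-exponent from the twisting of $\ell_d$" argument is a reasonable way to promote that observation to the closure, and the restriction to chains with at least two ends is harmless since the encircling curve of a one-end chain is nullhomotopic.

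Two steps need repair. First, the claim that $\phi T_c|_N$ and $\phi|_N$ are homotopic \emph{rel $\partial N$} because they agree on $\partial N$ and induce the same map on $\pi_1$ into a $K(\pi,1)$ is false in general: the identity and the twist on an annulus mapping to an essential loop are the standard counterexample, and here $\phi(c)$ is essential whenever $c$ is essential in $S$ (e.g.\ the boundary of a one-holed torus). The relative homotopy classes of an arc crossing $N$ differ by $\phi(c)^{\pm1}$, so no rel-$\partial N$ homotopy exists. What you actually need is weaker and true: $T_c$ is homotopic to $\id_S$ by a proper homotopy supported in $N$ that is allowed to move $c$ itself (rel only the frontier circle of $N$), whence $\phi T_c\simeq\phi$ properly; alternatively, argue as the paper does that $T_c$ fixes every curve and line up to (proper) homotopy and apply \Cref{lem:AB_Alexander}. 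Second, you invoke continuity of $\Theta$ to conclude $\ker\Theta$ is closed, but in the paper continuity is \Cref{prop: closedimage}, whose proof cites \Cref{thm:kernel_regular}; if you want to use it here you should note that continuity at the identity ($\Theta(U_K)\subseteq U_\Delta$ for suitable $K$) is established independently of the kernel computation, or else follow the paper and show directly via \Cref{lem:AB_Alexander} that any limit of products of compact-boundary twists lies in the kernel.
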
 

\begin{proof}

    Suppose $[f]$ is in the kernel of $\Theta$. By \Cref{lem:AB_Alexander}, $\Theta([f])$ fixes the homotopy class of every oriented closed curve in $\G$. In order to apply \Cref{lem:twists_regular}, we want to show $f$ fixes the homotopy class of every oriented closed curve of $S$.
    Let $\phi : S \to \G$ be a proper homotopy equivalence and $\psi: \G \to S$ be a proper homotopy inverse used to define $\Theta$. Let $\alpha \subset S$ be an oriented closed curve in $S$. Then as $\Theta([f]) = [\phi f \psi]$, and $[f] \in \ker \Theta$, we have that
    \[
    (\phi f \psi) (\phi(\alpha)) \simeq \phi(\alpha).
    \]
    However we know $\psi \phi \simeq \id_{S}$, so $(\phi f)(\alpha) \simeq \phi(\alpha)$. Finally, by postcomposing $\psi$ on the equation we get $f(\alpha) \simeq \alpha$. Hence, $f$ preserves every oriented closed curve on $S$ up to homotopy, and thus by \Cref{lem:twists_regular}, $f$ is a possibly infinite product of Dehn twists about compact boundary components and degenerate chains.
    
    It remains to show the subgroup topologically generated by Dehn twists about compact boundary components lies in the kernel, but not the subgroup topologically generated by Dehn twists about degenerate chains. Each Dehn twist about a compact boundary component is in the kernel since its support can be homotoped to be disjoint from any curve or line in the surface, and so it fixes each curve and line up to homotopy in the graph. Such a homotopy can be done simultaneously even for infinitely many collection of Dehn twists about compact boundary components. Hence, any infinite product of these twists must be in the kernel by \Cref{lem:AB_Alexander}. Note whenever the curve $\gamma$ about the degenerate chain is nontrivial, any nontrivial power of the Dehn twist about $\gamma$ is mapped by $\Theta$ to a PHE that does \emph{not} fix every nontrivial line going to and from a degenerate end in the chain. Note after homotopy, such a nontrivial line still intersects $\gamma$, and powers of $T_\gamma$ increase the (minimal) intersection number with $\gamma$. Thus, \Cref{lem:AB_Alexander} implies the Dehn twist about a degenerate chain is not in the kernel.
\end{proof}

We now want to understand the interplay between topologies for mapping class groups. We equip $\Map(S)$ with the standard topology given by a neighborhood basis of the identity consisting of clopen subgroups $U_K$ of elements with representatives that pointwise fix a given compact subsurface $K$. See \cite{vlamis2019notes} for more details. The cosets of these subgroups then give a basis for all of $\Map(S)$.

Following \cite{AB2025}, we equip $\Map(\G)$ with a similar topology where the neighborhood basis of the identity is given by the clopen subgroups $U_\Delta$ of elements with representatives $f$ that satisfy the following properties:

\begin{enumerate}
    \item $f$ fixes a finite subgraph $\Delta$ pointwise, 
    \item $f$ preserves the complementary components of $\Delta$,
    \item $f$ has a proper homotopy inverse $g$ satisfying the first two properties, and
    \item $fg$ and $gf$ are homotopic to the identity via proper homotopies that are stationary on $\Delta$ and preserve the complementary components.
\end{enumerate}

The mapping class groups $\Map(S)$ and $\Map(\G)$ are known to be Polish groups with these topologies (See \cite[Corollary 5]{vlamis2019notes}, \cite[Proposition 4.11]{AB2025}.) Recall the kernel of $\Theta$ is topologically generated by Dehn twists about compact boundary components by \Cref{thm:kernel_regular}. Since the kernel is closed, the quotient of $\Map(S)$ by the kernel is a Polish group with the quotient topology, call it $\Map^\star(S)$. Let $\Map_S(\G)$ denote the image of $\Theta$ equipped with the subspace topology from $\Map(\G)$. Now $\Theta$ descends to an abstract group isomorphism $\Theta^\star$ from $\Map^\star(S)$ to $\Map_S(\G)$.

\begin{proposition} \label{prop: closedimage}
The map $\Theta$ is continuous. Furthermore, $\Theta^\star$ is a topological isomorphism and $\Map_S(\G)$ is closed in $\Map(\G)$.
\end{proposition} 

\begin{proof}

By ~\Cref{prop:conjugate}, we can argue for a particularly chosen PHE $\phi: S \to \Gamma$. According to \Cref{cor:defretraction_restrict}, we can assume $\phi$ is a deformation retraction such that there is an exhaustion $\{\G_i\}$ of $\G$ by finite graphs and an exhaustion $\{S_i\}$ of $S$ such that the restriction $\phi|_{S_i}:S_i \to \G_i$ is a proper deformation retraction. We can also assume the PHE inverse $\psi: \G \to S$ is inclusion.

We  claim that the image of $U_{S_i}$ via $\Theta$ is contained in $U_{\Gamma_i} \cap \Map_S(\G)$. Note (1) holds by our choice for the PHE. The additional properties hold for a homeomorphism on a surface and thus for an element in the image. Since $\bigcap_i U_{S_i}$ and $\bigcap_i U_{\Gamma_i}$ are each the identity, we have that a sequence converging to the identity in $\Map(S)$ is sent by $\Theta$ to a sequence converging to the identity in $\Map(\Gamma)$, and since these are metrizable topological groups, continuity of $\Theta$ follows. The continuity of $\Theta^\star$ follows immediately.

Now we analyze the preimage of $U_{\Gamma_i} \cap \Map_S(\G)$ via $\Theta$. Any element in the preimage admits a representative homeomorphism $f$ that setwise preserves $S_i$. Otherwise, there exists a curve $\alpha \subset S_i$ such that $f(\alpha)$ is not contained in $S_i$ up to homotopy. Consequently, the curve $\phi(\alpha) \subset \G_i$ is mapped by $\Theta(f)$ to a curve that is not contained in $\G_i$ up to homotopy; indeed, $\Theta(f)\left(\phi(\alpha)\right)$ is homotopic to $\phi(f(\alpha))$, which is not contained in $\G_i$ up to homotopy, as $f(\alpha)$ is not contained in $S_i$ up to homotopy. Then, by the argument in the proof of \Cref{thm:kernel_regular}, we have $f|_{S_i}$ is homotopic to a product of Dehn twists about the (compact) boundary components of $S_i$. 

It follows the preimage of $U_{\Gamma_i} \cap \Map_S(\G)$ via $\Theta$ is contained in $\mathcal{U}_i$, the union  of countably many translates of $U_{S_i}$ by products of Dehn twists about (compact) components of $\partial S_i$. Note some of these Dehn twists are about boundary components of $S_i$ are different than the original compact boundary components of $S$. However, the intersection $\bigcap_i \mathcal{U}_i$ is the group topologically generated by Dehn twists about only the original boundary components. It follows, a sequence converging to the identity in $\Map_S(\G)$ is mapped by $(\Theta^\star)^{-1}$ to a sequence converging to the identity.

We now have that $\Theta^\star$ is a topological isomorphism, so the image $\Map_S(\G)$ is a Polish group. A subgroup of a Polish group is Polish iff it is closed, so $\Map_S(\G)$ is closed in $\Map(\G)$.\end{proof}

Together, \Cref{thm:kernel_regular} and \Cref{prop: closedimage} make up \Cref{thm:MainTHM_DNB} from the introduction.

\subsection{The extended mapping class group}

For surfaces with interesting boundary, it is natural to expand our discussion to the \emph{extended} mapping class group, $\ExMap(S)$. We define $\ExMap(S)$ as simply the homeomorphism group of $S$ modulo isotopy. So, the homeomorphisms are allowed to act nontrivially on boundary components and the equivalence relation allows isotopies which are nontrivial on the boundary. One effect of this change is that Dehn twists about compact boundary components become trivial mapping classes because they are isotopic to the identity. However, when a surface with nontrivial $\pi_1$ contains punctured boundary components (a boundary chain of degenerate ends) the Dehn twist about the boundary chain is nontrivial. When there are finitely many punctures, there are also elements with the same support as the Dehn twist whose finite powers are Dehn twists, called \emph{fractional Dehn twists}. See \Cref{fig:fractional_DT_simpler} for an example of a $\frac13$-Dehn twist. When there are infinitely many punctures, the types of twists and shifts supported in a neighborhood of the boundary chain depend on the topology of the points removed from the boundary, but there is always a full Dehn twist.

For surfaces of low-complexity, their extended mapping class groups recover a familiar group as in the following example.

\begin{example}[Dihedral group as extended mapping class group] \label{ex: dihedral}
Let $S$ be a disk with a finite number $n$ of boundary points removed. The mapping class group is trivial, but the extended mapping class group is $D_n$, the dihedral group of order $2n$, since $S$ can be viewed as a regular polygon with $n$ (open) vertices.

Removing an \emph{infinite} set from the boundary can have different effects on the extended mapping class group depending on the topology of the infinite set. For example, removing a set homeomorphic to $\omega+1 \cong \{0\} \cup \{\frac{1}{n} \mid n \in \N \}$ yields a surface with trivial extended mapping class group (note there is a unique boundary component going off to the limit point, and once this is fixed all other boundary components are as well). On the other hand, if we remove two disjoint sets each homeomorphic to $\omega +1$ then there are two possible surfaces depending on the direction in which the points accumulate to the two limit points. See \Cref{fig:nonhomeo2}.
  One of these surfaces has $D_\infty$, the infinite dihedral group, as its extended mapping class group, and the other has $\Z_2$ as its extended mapping class group. However, each of these two surfaces is PHE to the same graph (and thus to each other) demonstrating that different surfaces can produce different maps to the same $\Map(\G)$. 
\end{example}

There is a natural map
\[
    \widetilde{\Theta}: \ExMap(S) \longrightarrow \Map(\G), \qquad [f] \mapsto [\phi f \psi].
\]
Following the same proof as \Cref{prop:HomMappingClassGroups} we have the following proposition

\begin{proposition} \label{prop:HomExtMappingClassGroups}
    The map $\widetilde{\Theta}: \ExMap(S) \to \Map(\G)$ is a well-defined homomorphism. 
\end{proposition}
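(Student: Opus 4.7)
The plan is to mirror the proof of \Cref{prop:HomMappingClassGroups} essentially verbatim, since the formula defining $\widetilde{\Theta}$ is identical to that defining $\Theta$; the only genuinely new point is to verify that an isotopy of $S$ that is \emph{not} required to fix $\partial S$ pointwise is still a proper homotopy. Once this is in place, well-definedness and multiplicativity follow from the same formal manipulations, with the relation $\psi\phi \simeq \id_S$ (as proper maps) doing the work.

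For well-definedness, suppose $f, g \in \Homeo(S)$ are isotopic via a continuous $H \colon S \times [0,1] \to S$ with $H_0 = f$, $H_1 = g$, and each time-slice $H_t$ a homeomorphism of $S$. The first step is to argue $H$ is proper. I would introduce
\[
\widetilde{H} \colon S \times [0,1] \to S \times [0,1], \qquad (x,t) \mapsto (H_t(x), t),
\]
which is a homeomorphism of the locally compact Hausdorff space $S\times[0,1]$ (its inverse is $(y,t)\mapsto (H_t^{-1}(y),t)$), hence proper. Since $H = \pi_S \circ \widetilde{H}$, where $\pi_S \colon S \times [0,1] \to S$ is proper because $[0,1]$ is compact, $H$ is proper as well. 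Then $\phi \circ H \circ (\psi \times \id_{[0,1]}) \colon \G \times [0,1] \to \G$ is a proper homotopy from $\phi f \psi$ to $\phi g \psi$, so $[\phi f \psi] = [\phi g \psi]$ in $\Map(\G)$. Along the way one notes that $\phi f \psi$ is a proper homotopy equivalence with inverse properly homotopic to $\phi f^{-1} \psi$, so it represents a bona fide element of $\Map(\G)$.

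For the homomorphism property, the same computation as in \Cref{prop:HomMappingClassGroups} applies: for $[f], [g] \in \ExMap(S)$,
\[
\widetilde{\Theta}([f])\,\widetilde{\Theta}([g]) = [\phi f \psi \phi g \psi] = [\phi f g \psi] = \widetilde{\Theta}([fg]),
\]
using that $\psi\phi$ is properly homotopic to $\id_S$ and that pre- and post-composition with proper maps preserves proper homotopy.

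The only substantive obstacle is the first step. Isotopies representing elements of $\ExMap(S)$ can slide points arbitrarily along noncompact boundary components (for instance, a fractional Dehn twist along a degenerate chain moves ends cyclically as in \Cref{fig:fractional_DT_simpler}), so it is not \emph{a priori} obvious that the time-slices do not drag mass out to infinity. The $\widetilde{H}$ trick above sidesteps this by repackaging the isotopy as a homeomorphism of $S \times [0,1]$ and offloading the properness to the compactness of the time interval, after which everything reduces to the argument already given for $\Theta$.
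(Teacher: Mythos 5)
Your proposal is correct and follows essentially the same route as the paper, which simply states that the proof of \Cref{prop:HomMappingClassGroups} carries over verbatim. Your only addition is the explicit verification that an isotopy not fixing $\partial S$ is still a proper homotopy (via the level-preserving homeomorphism $\widetilde{H}$ of $S\times[0,1]$ composed with the proper projection), which correctly fills in a detail the paper leaves implicit.
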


In this case, the image of $\widetilde{\Theta}$ may contain elements that do not fix the ends of $\G$. As an example, consider a disk with three points removed from the boundary, which is PHE to a tripod graph. An order $3$ rotation around the boundary of the disk is mapped to a cyclic permutation of the three ends of the tripod graph via $\widetilde{\Theta}$. 

Our goal is to show $\widetilde{\Theta}$ is injective. Note a proper homotopy equivalence $S \to \G$ maps a compact boundary component of $S$ to a loop in $\G$, and an oriented noncompact boundary component of $S$ to an oriented line in $\G$. We will need this correspondence to be `injective' as well, to apply \Cref{lem:AB_Alexander}. Namely, we need a compact boundary component of $S$ to be mapped to a nontrivial loop of $\G$, distinct compact boundary components are mapped to distinct curves of $\G$ up to homotopy, and distinct oriented noncompact boundary components of $S$ are mapped to distinct oriented lines in $\G$ up to proper homotopy.

\begin{examples} \label{exa:sporadic} We first discuss the surfaces for which $\widetilde{\Theta}$ fails to be injective. 
\begin{enumerate}

    \item Consider the disk. The extended mapping class of the disk is $\Z_2$. The disk is PHE to a point, which has a trivial mapping class group. Indeed, it has one compact boundary component which is mapped to a point by the PHE, which is a trivial loop.
    
    \item Consider the half-strip, $\R_{\ge 0} \times [0,1]$. The extended mapping class group is $\Z_2$, but the half-strip is PHE to a ray that has trivial mapping class group. Indeed, the half-strip has two distinct \emph{oriented} noncompact boundary components, and both are mapped to the same oriented line in the ray by the PHE.
    
    \item Consider the strip, $\R \times [0, 1]$. The extended mapping class group is $D_2 \cong \Z_2 \times \Z_2$, but the strip is PHE to a line whose mapping class group is $\Z_2$. The kernel in this case is generated by the reflection that swaps the two boundary components of the strip. Indeed, the strip has two distinct noncompact boundary components (up to orientation) and both are mapped to the same line by the PHE.
    
    \item The extended mapping class group of the annulus is $\Z_2 \times \Z_2$ (one coordinate records the permutation of the boundary components, and the other records the orientation, while the Dehn twist is trivial). The annulus is PHE to a circle whose mapping class group is $\Z_2$, and the kernel is generated by the reflection that swaps boundary components. Indeed, in this case there are two compact boundary components, both of which are mapped to the same loop by the PHE.

    \end{enumerate}

\end{examples}
We call the surfaces homeomorphic to the surfaces in \Cref{exa:sporadic} --- the disk, the half-strip, the strip, or the annulus ---  \textbf{sporadic}. Otherwise, we call the surface  \textbf{nonsporadic}.

\begin{theorem} \label{thm:kernel}
     Suppose $S$ is a nonsporadic surface PHE to a graph $\G$. The map $\Tilde{\Theta}: \ExMap(S) \rightarrow \MCG(\G)$ is injective.
\end{theorem}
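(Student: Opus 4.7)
The plan is to reduce the injectivity of $\widetilde{\Theta}$ to the kernel computation of $\Theta$ in Theorem 5.7, by showing every element of $\ker\widetilde{\Theta}$ admits a representative that fixes $\partial S$ pointwise, and then observing that Dehn twists about compact boundary components are trivial in $\ExMap(S)$.

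The key technical input from nonsporadicity will be an \emph{injective correspondence}: that the PHE $\phi: S \to \G$ sends distinct compact boundary components of $S$ to pairwise non-freely-homotopic nontrivial loops in $\G$, and distinct oriented noncompact boundary components of $S$ to pairwise non-properly-homotopic oriented lines converging to ends in $E(\G) \setminus E_\ell(\G)$. The sporadic Examples 5.10 are exactly where this fails; for nonsporadic $S$ this should be extractable from the explicit proper deformation retraction of Section 4, where each compact boundary component survives as an essential loop and distinct boundary chains produce distinguishable line-to-end data.

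Granting the correspondence, suppose $[f] \in \ker\widetilde{\Theta}$, so $\phi f \psi \simeq \id_\G$ properly. By Lemma 5.4, $\phi f \psi$ preserves the homotopy class of every oriented closed curve in $\G$ and the proper homotopy class of every oriented line converging at each end to $E(\G)\setminus E_\ell(\G)$. For any oriented simple closed curve $\alpha\subset S$, using $\psi\phi\simeq\id_S$ twice,
\[
\phi f(\alpha) \simeq (\phi f \psi)(\phi(\alpha)) \simeq \phi(\alpha), \qquad \text{so} \qquad f(\alpha)\simeq\psi\phi f(\alpha)\simeq\psi\phi(\alpha)\simeq\alpha.
\]
The analogous statement for oriented compact and noncompact boundary components, combined with the injective correspondence, forces $f$ to fix each oriented boundary component of $S$ setwise with its orientation. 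In particular $f$ is orientation-preserving on $S$, and each restriction to a boundary component is an orientation-preserving self-homeomorphism of $S^1$ or $\R$, hence isotopic to the identity; extending these isotopies through disjoint collar neighborhoods upgrades $f$, via an isotopy in $\Homeo(S)$, to some $f'\in\Homeo(S,\partial S)$ with $[f]=[f']$ in $\ExMap(S)$.

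Now $[f']$ lies in $\Map(S)$ and $\Theta([f'])=\widetilde{\Theta}([f'])=[\id]$, so by Theorem 5.7 we have $[f']\in\overline{\langle T_{\partial_i}\rangle}\subset\Map(S)$, the closure of the subgroup generated by Dehn twists about compact boundary components. Since each $T_{\partial_i}$ is trivial in $\ExMap(S)$ (the twist unwinds via a boundary isotopy), and the natural homomorphism $\Map(S)\to\ExMap(S)$ is continuous, the image of this closure is trivial. Thus $[f]=[\id]$ in $\ExMap(S)$. The main obstacle will be establishing the injective correspondence in full generality, particularly ruling out accidental coincidences of line-to-end data for noncompact boundary components in surfaces whose boundary structure is intricate (e.g., many interacting boundary chains or mixes of degenerate and nondegenerate chains); the rest is a formal reduction to the regular case.
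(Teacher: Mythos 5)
Your argument is correct in outline, and its first (and crucial) phase coincides with the paper's: apply \Cref{lem:AB_Alexander} to an element of $\ker\widetilde{\Theta}$ and use nonsporadicity to conclude that $f$ preserves each boundary component of $S$ together with its orientation. The paper is exactly as terse as you are about the ``injective correspondence'' --- it simply asserts that excluding the disk, annulus, strip, and half-strip forces the oriented boundary components to be fixed --- so your flagging of this step as the main obstacle matches the actual state of the written proof. (One shared subtlety worth noting: \Cref{lem:AB_Alexander} only controls lines converging to ends in $E(\G)\setminus E_\ell(\G)$, so for noncompact boundary components whose ends are accumulated by loops, as in a sliced Loch Ness monster, the line-fixing argument does not apply verbatim and additional care is needed.) Where you genuinely diverge is the endgame. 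The paper stays inside $\ExMap(S)$: it re-runs the argument of \Cref{lem:twists_regular} to write $f$ as a possibly infinite product of Dehn twists about degenerate chains --- compact boundary twists having already died in $\ExMap(S)$ --- and then observes that such twists are not in the kernel because they move lines going into degenerate ends. You instead straighten $f$ along disjoint collars to a representative $f'\in\Homeo(S,\partial S)$ and quote \Cref{thm:kernel_regular} as a black box, so the residual ambiguity is the closure of the subgroup generated by compact boundary twists, which is killed by the natural map $\Map(S)\to\ExMap(S)$. This reduction is clean and avoids duplicating the Alexander-method computation; the only extra inputs it needs are the continuity of $\Map(S)\to\ExMap(S)$ and the closedness of the identity class in $\ExMap(S)$ --- or, more elementarily, the observation that an infinite product of twists supported in disjoint boundary annuli can be simultaneously untwisted by a single ambient isotopy. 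Both routes ultimately rest on the same hard content (the injective correspondence, and the fact that degenerate-chain twists survive in $\Map(\G)$); you import the latter via \Cref{thm:kernel_regular} rather than reproving it.
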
 
\begin{proof}

We first claim that for a nonsporadic surface $S$ PHE to a graph, the boundary components of $S$ are mapped injectively. Note $S$ is sufficiently complicated so we assume it has a complete hyperbolic metric with geodesic boundary. 

By \cite[Theorem 1.7]{Epstein1966}, if any compact boundary component of $S$ is homotopic to a point, then it must bound a disk, a contradiction to the assumption that $S$ is nonsporadic.

Suppose two distinct compact boundary components of $S$ are homotopic. Following \cite[Proposition 1.10]{primer}, we lift the corresponding boundary geodesics to geodesics with the same endpoints in the universal cover, but since the universal cover is a region of hyperbolic space, we must have that the boundary components lift to the same geodesic, a contradiction.

Since we are considering surfaces with boundary, note the universal cover is a region of the hyperbolic plane bounded by geodesics. If two of these geodesics are proper homotopic within the universal cover (or a geodesic is proper homotopic to its reverse), then there cannot be any other boundary geodesics separating their endpoints. It follows the only possible cases are the half-strip bounded by a single geodesic and a strip bounded by two, a contradiction. The claim now follows.

Suppose $[f]$ is in the kernel of $\Tilde{\Theta}$. By \Cref{lem:AB_Alexander}, $\Tilde{\Theta}([f])$ fixes oriented curves in $\G$ up to homotopy and fixes oriented lines going out to the ends up to proper homotopy. The noncompact boundary components of $S$ are mapped to oriented lines going out to the ends in $\G$ via the PHE from $S$ to $\G$. Since $S$ is nonsporadic, in particular it is not homeomorphic to the half-strip or the strip, we must have that $f$ setwise fixes each oriented noncompact boundary component of $S$. On the other hand, compact boundary components of $S$ are mapped to loops in $\G$. Similarly, as $S$ is nonsporadic, in particular not homeomorphic to the disk or the annulus, the compact boundary components must be also fixed setwise since oriented curves are fixed up to homotopy by $f$. Now by an argument similar to the proof of \Cref{lem:twists_regular}, we have that $f$ can be written as a possibly infinite product of Dehn twists about degenerate chains, but these twists are not in the kernel so that $f$ must actually be the identity.
\end{proof}

Now using the same proof as \Cref{prop: closedimage}, we have the following. 

\begin{proposition}\label{prop:closedimage_EX}
    The map $ \widetilde{\Theta}$ is continuous and has a closed image.
\end{proposition}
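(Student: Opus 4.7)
The plan is to run the argument of \Cref{prop: closedimage} essentially verbatim, with the simplification that by \Cref{thm:kernel} the map $\widetilde{\Theta}$ is already injective on nonsporadic (in particular, infinite-type) surfaces, so no quotient is needed. First I would equip $\ExMap(S)$ with its standard Polish topology, whose neighborhood basis at the identity consists of the subgroups $\widetilde{U}_K$ of elements admitting a representative that fixes a compact subsurface $K \subset S$ pointwise. The target $\Map(\G)$ carries the Algom-Kfir--Bestvina topology generated by the subgroups $U_\Delta$ described earlier.

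The core task is to show that for each compact $K \subset S$ there is a finite subgraph $\Delta \subset \G$, and conversely for each $\Delta$ there is a $K$, such that
\[
\widetilde{\Theta}(\widetilde{U}_K) \;=\; U_\Delta \cap \widetilde{\Theta}(\ExMap(S)).
\]
The $\subseteq$ inclusion is immediate: if $[f] \in \widetilde{U}_K$ has a representative $f$ fixing $K$ pointwise, then $\phi f \psi$ fixes the finite subgraph $\phi(K)$ pointwise and, after enlarging to a $\Delta \supset \phi(K)$, satisfies the remaining conditions defining $U_\Delta$. For the $\supseteq$ inclusion, given $\Delta \subset \G$ finite, let $K$ be the smallest subsurface of $S$ containing $\psi(\Delta)$; this is compact by properness of $\psi$. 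If $[f] \in \widetilde{\Theta}^{-1}(U_\Delta)$, then by the action on $\pi_1$ every oriented curve in $K$ is fixed up to homotopy by $f$, and, as in the proof of \Cref{thm:kernel}, because $S$ is nonsporadic each oriented compact boundary component and each oriented noncompact boundary component of $S$ meeting $K$ is fixed setwise. An Alexander-method argument analogous to that of \Cref{lem:twists_regular} and \Cref{thm:kernel} then produces a representative fixing $K$ pointwise, so $[f] \in \widetilde{U}_K$.

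Since $\widetilde{\Theta}$ sends a neighborhood basis of the identity bijectively onto a neighborhood basis of the identity in its image, it restricts to a topological isomorphism onto its image. In particular $\widetilde{\Theta}$ is continuous, and the image is a Polish subgroup of the Polish group $\Map(\G)$, hence closed (using the standard fact that a subgroup of a Polish group is closed if and only if it is Polish in the subspace topology).

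The only real obstacle, modest here, is verifying that the Alexander-style step still goes through in the extended setting, where representatives may permute boundary components and reverse orientation; this is precisely where the nonsporadic hypothesis enters, exactly as in the proof of \Cref{thm:kernel}, to guarantee that no boundary component can be moved in a way invisible to the action on $\G$. With that in place the argument is formally identical to that of \Cref{prop: closedimage}.
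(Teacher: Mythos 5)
Your proposal is correct and follows essentially the same route as the paper, which simply states that \Cref{prop:closedimage_EX} follows "using the same proof as \Cref{prop: closedimage}"; your write-up supplies the details of that transfer, including the correct observation that injectivity from \Cref{thm:kernel} removes the need for the quotient step and that the nonsporadic hypothesis is what makes the Alexander-method step go through in the extended setting.
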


Together \Cref{prop:HomExtMappingClassGroups}, \Cref{thm:kernel}, and \Cref{prop:closedimage_EX}
prove \Cref{thm:MainTHM_DNB_EX}.

\section{The image of the induced map} \label{sec:image}

Now we discuss progress towards answering \Cref{q:DNBimage} and
\Cref{conj:mainimage}. Consider a set $\mathcal{B}$ of properly immersed oriented lines and immersed oriented curves in $\G$. Define $\Map(\G,\mathcal{B})$ as the subgroup of mapping classes on $\G$ that fixes each class in $\mathcal{B}$ (and its orientation) up to proper homotopy. Let $\Map^\star(\G,\mathcal{B})$ denote the subgroup of mapping classes that preserve the lines and curves in $\mathcal{B}$ setwise (but possibly reverse the orientations). 
For the rest of the section, fix the collection $\mathcal{B}$ of lines and curves in $\G$ that are the images of boundary components of $S$ via the proper deformation retraction $S \to \G$.

\begin{proposition}\label{prop:imgfix}
  Let $S$ be a surface PHE to a graph $\G$.
The image of $\Theta:\Map(S)\to\Map(\G)$ is contained in $\Map(\G,\mathcal{B})$, and the image of $\Tilde{\Theta}: \ExMap(S) \to\Map(\G)$ is contained in $\Map^\star(\G,\mathcal{B})$.
\end{proposition}

\begin{proof}
In the first case, the proposition follows from the fact that mapping class group elements have representatives that fix the boundary pointwise. The second case follows from the fact that extended mapping class group elements have representatives that fix the boundary setwise.
\end{proof}

\begin{example}\label{exa:DNBimage1ray}
    Consider when $S$ is a compact surface with one boundary component, and
    $\hat{S}$ is obtained from $S$ with a single point removed from the
    boundary, so that $\Map(S) \cong \Map(\hat{S})$. Call the resulting boundary
    end $p$. Let $\G$ be a graph PHE to
    $S$, and $\hat{\G}$ be $\G$ modified by attaching a ray, so that $\hat{\G}$
    is PHE to $\hat{S}$. We have that $\Map(\G) \cong \Out(F_n)$ while
    $\Map(\hat{\G}) \cong \Aut(F_n)$ by \Cref{lem:PMapfinitegraph}. Choose the PHE $\hat{S} \to \hat{\G}$ so that it restricts to the PHE $S \to \G$. Such a choice results in the induced map $\hat{\Theta}: \Map(\hat{S}) \rightarrow \Map(\hat{\G})$ composed with the quotient map $q:\Aut(F_n) \rightarrow \Out(F_n)$ agrees with $\Theta$. Here $q$ is exactly the map $f \mapsto \pi f i$ where $i$ is the inclusion of $\G$ to $\hat{\G}$ and $\pi$ is a retraction $\hat{\G}$ to $\G$ collapsing the ray.
    
    We have by \Cref{thm:MainTHM_DNB,thm:MainTHM_DNB_EX} that $\hat{\Theta}$ is injective, while $\Theta$ has kernel generated by the Dehn twist about the boundary component which acts by conjugating by the element of $F_n$ coming from the curve around the boundary component. By the usual DNB theorem, \Cref{thm:classic_DNB}, the image of $\Theta$ is the group of outer automorphisms \emph{induced from the orientation preserving homeomorphisms}, fixing the conjugacy class of the curve \emph{and its orientation} coming from the boundary. It follows that the image of $\hat{\Theta}$ in $\Aut(F_n)$ is the group of \emph{orientation preserving} (that is, its determinant after abelianization is positive) automorphisms fixing the elements of $F_n \cong \pi_1(\hat{S}, p)$ coming from the noncompact boundary component. Here, $\pi_1(\hat{S}, p)$ is the group of homotopy classes of lines coming from and going to the end $p$, introduced in \cite[Section 3]{AB2025}. When there are additional compact boundary components, the same idea shows that the image of $\Theta$ is the group of automorphisms preserving the line \emph{with its orientation} from the punctured boundary component viewed as a free group element and preserving the curves \emph{with their orientations} viewed as conjugacy classes.
\end{example}

\begin{proposition}\label{prop:image_finiterays}
For a surface $S$ that is PHE to a finite graph $\G$ with finitely many rays attached, the image of $\Theta:\Map(S) \to \Map(\G)$ is equal to $\Map(\G,\mathcal{B})$. 
\end{proposition}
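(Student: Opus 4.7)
By \Cref{prop:imgfix}, it suffices to establish the reverse inclusion $\Map(\G, \mathcal{B}) \subseteq \operatorname{Im}(\Theta)$.

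The plan is to first identify the topological type of $S$ forced by the assumption on $\G$. Since $\G$ is a finite graph with finitely many rays attached, $\G$ has finite rank $n$, finite end space, and $E_\ell(\G) = \emptyset$. Using the effect of the proper deformation retraction on ends (see the remark following \Cref{prop:reverse}), $S$ must have finite genus, finitely many compact boundary components, and no boundary chains accumulating to any end. Combined with the absence of boundary isolating curves (\Cref{thm:maincharacterization}), every end of $S$ is forced to be a degenerate end. Hence $S$ is homeomorphic to $\bar{S} \setminus P$ for a compact surface $\bar{S}$ of finite genus and a finite set $P \subset \partial \bar{S}$. An element of $\Map(S)$ has a representative fixing $\partial S = \partial \bar{S} \setminus P$ pointwise, which extends by continuity to a homeomorphism of $\bar{S}$ fixing all of $\partial \bar{S}$ pointwise, yielding a natural isomorphism $\Map(S) \cong \Map(\bar{S})$.

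Next, I would factor $\Theta$ through the classical Dehn--Nielsen--Baer framework of \Cref{exa:DNBCompactI}. After quotienting by Dehn twists about the compact boundary circles of $\bar{S}$ that are disjoint from $P$, the map $\Map(\bar{S}) \to \Out(F_n)$ has image $\Out^{\star}(F_n)$, the subgroup of outer automorphisms preserving each oriented conjugacy class associated to a boundary ``peripheral element'' of $\bar{S}$. Here the peripheral elements consist of the boundary circles of $\bar{S}$ disjoint from $P$ (which persist as compact boundary components of $S$) together with the boundary arcs between consecutive points of $P$ on each circle of $\partial \bar{S}$ meeting $P$ (which become the noncompact boundary components of $S$). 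Under the PHE $\phi: S \to \G$, these peripheral elements map bijectively onto the loops and lines in $\mathcal{B}$.

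Finally, I would identify $\Map(\G)$ with an automorphism group of $F_n$ relative to end data. For $\G$ a finite graph with finitely many rays, each ray provides a canonical basepoint at infinity, identifying the pure part of $\Map(\G)$ with an appropriate quotient of $\Aut(F_n)$ in the spirit of \Cref{exa:DNBimage1ray}. Under this identification, the subgroup $\Map(\G, \mathcal{B})$ corresponds exactly to $\Out^{\star}(F_n)$. Thus, given $[\varphi] \in \Map(\G, \mathcal{B})$, classical DNB produces $[f] \in \Map(\bar{S}) \cong \Map(S)$ whose action on $F_n$ realizes $\varphi$; one verifies $\Theta([f]) = [\varphi]$ using \Cref{lem:AB_Alexander}, by checking that both proper homotopy equivalences act identically on oriented loops and oriented proper lines of $\G$. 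The main obstacle will be the bookkeeping of these identifications, especially when several rays of $\G$ are attached at distinct vertices and the peripheral arcs of $\bar{S}$ interact nontrivially with the tree-like structure at the ends; once these identifications are made clean, the equality of images is immediate from the classical theorem.
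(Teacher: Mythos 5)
Your reduction to the inclusion $\Map(\G,\mathcal{B})\subseteq\operatorname{Im}(\Theta)$ via \Cref{prop:imgfix} is the same first move as the paper, and your topological analysis of $S$ (finite genus, all ends degenerate, $S\cong\bar S\setminus P$ with $P\subset\partial\bar S$ finite, $\Map(S)\cong\Map(\bar S)$) is correct and a useful observation. The engine you invoke --- the classical Dehn--Nielsen--Baer theorem --- is also the engine the paper uses. But the step you defer as ``bookkeeping'' is in fact the entire content of the proposition, and the identification you assert to dispatch it is not correct as stated. When $\G$ has $e\ge 2$ rays, $\PMap(\G)\cong F_n^{e-1}\rtimes\Aut(F_n)$ (see the discussion of word maps in the paper's proof and in \cite[Lemma 5.2]{domat2023generating}); this is strictly larger than any quotient of $\Aut(F_n)$, and $\Map(\G,\mathcal{B})$ sits inside it as a subgroup that is \emph{not} isomorphic to $\Out^\star(F_n)$ --- already in the one-ray case of \Cref{exa:DNBimage1ray} the relevant subgroup is the stabilizer in $\Aut(F_n)$ of an actual group element, which is only an extension of the stabilizer of its conjugacy class in $\Out(F_n)$. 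What actually has to be proved is that the finitely many line-fixing conditions imposed by $\mathcal{B}$ (lines running between possibly distinct rays) pin down all $e-1$ word-map coordinates of an element of $\Map(\G,\mathcal{B})$, and that the resulting element is realized by a homeomorphism of $S$. Your sketch gives no argument for either claim.

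The paper sidesteps exactly this difficulty by inducting on the number of rays: it removes one ray to form $\hat\G$ (filling the corresponding puncture of $S$), uses the forgetful homomorphism $\phi:\PMap(\G)\to\PMap(\hat\G)$ with kernel the $F_n$ of word maps on that ray, checks that $\phi$ is injective on $\operatorname{Im}(\Theta)$ and that $\phi$ carries $\Map(\G,\mathcal{B})$ into $\Map(\hat\G,\hat{\mathcal{B}})$ (this requires the surgery analysis relating $\mathcal{B}$ to $\hat{\mathcal{B}}$: a line through the removed ray either closes up into a curve or concatenates with another line), and then invokes the inductive hypothesis together with the section $i:\PMap(\hat\G)\to\PMap(\G)$. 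If you want to keep your one-shot approach, you would need to supply the analogue of these facts directly: a proof that the projection $\Map(\G,\mathcal{B})\to\Out(F_n)$ is injective with image $\Out^\star(F_n)$, using that every ray carries at least one line of $\mathcal{B}$ so that a nontrivial word map moves some line. As written, the proposal has a genuine gap at its central step.
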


\begin{proof} 
Note $\Map(\G,\mathcal{B})$ is in $\PMap(\G)$, so we work within the pure
mapping class group throughout the proof. We argue by induction on the number of
rays. The base case was covered in \Cref{exa:DNBimage1ray}. Hence, say $\G$ has
$e$ ends, and we may assume $e \ge 2$. Let $n=\rk(\G)$. We fill in a puncture on
the boundary of $S$ to obtain $\check{S}$, and we modify the graph $\Gamma$ to
obtain $\check{\G}$ by \emph{removing} a ray. The mapping class group of the surface is
unchanged, but the mapping class group of the graph loses a factor of $F_n$ in
its semidirect product structure. Namely, $\Map(S)=\Map(\check{S})$ and $\PMap(\G)
\cong F_n^{e-1} \rtimes \Aut(F_n)$, so $\PMap(\check{\G}) \cong  F_n^{e-2} \rtimes
\Aut(F_n)$ by \Cref{lem:PMapfinitegraph}.

From \cite[Lemma 5.2]{domat2025generating}, there is a \emph{forgetful} homomorphism $\phi: \PMap(\G) \rightarrow \PMap(\check{\G})$ with kernel $F_n$ represented by word maps supported on the ray, which are homotopy equivalences given by wrapping the missing ray around a given word in $F_n$, and a section $i: \PMap(\check{\G}) \rightarrow \PMap(\G)$ given by inclusion of graphs.
Set up the PHE $S \to \G$ so that it restricts to a PHE $\check{S} \to \check{\G}$. Then the induced map $\check{\Theta}: \Map(S)=\Map(\check{S}) \rightarrow \PMap(\check{\G})$ agrees with $\phi \circ \Theta$, and $\phi$ is injective on the image of $\Theta$, as there are no word maps supported on the (forgotten) ray in the image of $\Theta$. Here $\phi$ is exactly the map $f \mapsto \pi f i$ where $i$ is the inclusion of $\check{\G}$ to $\G$ and $\pi$ is a retraction $\G$ to $\check{\G}$ collapsing the ray.

By induction hypothesis, the image of $\check{\Theta}$ is exactly the elements fixing the lines from punctured boundary components and curves from the compact boundary components of $\check{S}$. Call this collection of lines and curves $\check{\mathcal{B}}$. Now suppose $g \in \Map(\G,\mathcal{B})$. Using the section $i$, we conflate $\Map(\check{\G})$ with a subgroup of $\Map(\G)$. We claim $\phi(g) \in \Map(\check{\G},\check{\mathcal{B}})$, so that it is in the image of $\check{\Theta}=\phi \circ \Theta$ by the hypothesis. Suppose we have shown the claim; write $\phi(g) = \phi(\Theta(f))$, for some $f \in \Map(S)$. This implies $g = k\Theta(f)$ for some $k \in \ker \phi$. This implies $k = g\Theta(f)^{-1} \in \Map(\G,\mathcal{B})$ by \Cref{prop:imgfix}. However, as the nontrivial elements in $\ker \phi$ act nontrivially on a ray in $\mathcal{B}$, this implies $\ker \phi \cap \Map(\G,\mathcal{B})=1$. In particular, $k=1$. Hence, it follows that $g$ is in the image of $\Theta$, showing $\Map(\G,\mathcal{B})$ is contained in the image of $\Theta$. The other containment is given by \Cref{prop:imgfix}, so the claim will conclude the proof.

To show the claim, first note that $\check{\mathcal{B}}$ is related to $\mathcal{B}$ in the following manner. By filling in the puncture and removing the ray, either one line in $\mathcal{B}$ is surgered into a curve in $\check{\mathcal{B}}$ (here the line corresponds to a noncompact boundary component going to and from the same end) or two lines in $\mathcal{B}$ are surgered into a line in $\check{\mathcal{B}}$. Every curve and line in $\check{\mathcal{B}}$ is either obtained from $\mathcal{B}$ in this way or is also in $\mathcal{B}$. Since $g$ fixes the elements of $\mathcal{B}$, and in particular $g$ fixes the given ray, $\phi(g)$ is just restriction of $g$ to $\check{\G}$. Hence, by the observation of elements of $\check{\mathcal{B}}$ obtained from those of $\mathcal{B}$, it follows that $\phi(g)$ must fix the elements of $\check{\mathcal{B}}$. \end{proof}

A PHE $\phi$ of a graph $\G$ is said to be \textbf{totally supported} on a subgraph $\Delta$ of $\Gamma$ if there exists a representative of $\phi$ supported on $\Delta$ and $\phi(\Delta) \subseteq \Delta$, equivalently if there exists a proper homotopy inverse $\psi$ of $\phi$ whose representative is supported on $\Delta$.
Denote by $\Map_\G(\Delta)$ the subgroup of $\Map(\G)$ of elements totally supported on a subgraph $\Delta$. Note $\Map_\G(\Delta)$ can be naturally identified with $\Map(\Delta,\partial \Delta)$, the group of PHEs of $\Delta$ relative to the frontier $\partial\Delta$ of $\Delta$ in $\G$. Recall $\partial \Delta = \Delta \cap \overline{\G \setminus \Delta}$. Similarly, for a subsurface $K$ of $S$, $\Map(K)$ is identified with the subgroup of $\Map(S)$ of elements supported on a compact subsurface $K$. 

Given a compact subgraph $\Delta \subset \G$, denote by $\Delta^\dag$ the graph obtained from $\Delta$, attaching a ray to each point in $\partial \Delta$. Then we have a proper map $c: \G \to \Delta^\dag$, defined as identity on $\Delta$ but collapsing each complementary component of $\Delta$ in $\G$ to the corresponding ray in $\Delta^\dag \setminus \Delta$. On the other hand, given $g \in \Map(\G)$ totally supported on $\Delta$, there is a natural element $g^\dag \in \Map(\Delta^\dag)$ where $g^\dag=g$ on $\Delta$, and identity otherwise.
Regarding preserving the proper lines in $\G$, we have the following consistency.

\begin{lemma}\label{lem:line-segment}
    Let $\Delta \subset \G$ be a compact subgraph and $g \in \Map(\G)$ be totally supported on $\Delta$. If $\ell$ is a proper line in $\G$ so that $\ell \setminus \Delta$ is an interval, then
    \[
        g(\ell) \simeq \ell \text{ in $\G$} \quad \text{$\Longleftrightarrow$} \quad g^{\dag}(c(\ell)) \simeq c(\ell) \text{ in $\Delta^\dag$},
    \]
    where $\simeq$ denotes proper homotopy. Equivalently, $g$ fixes the proper homotopy class of $\ell$ in $\G$ if and only if $g$ fixes the proper homotopy class of each segment in $\ell \cap \Delta$, rel $\partial \Delta$.
\end{lemma}

\begin{proof}
    $(\Leftarrow)$ We may take a proper homotopy $g^\dag(c(\ell)) \simeq c(\ell)$ to be stationary on the attached rays outside $\Delta$ in $\Delta^\dag$. Then this induces a proper homotopy $g(\ell)\simeq \ell$ in $\Delta$ in $\G$, and we can extend by identity on the complementary component of $\Delta$ in $\G$.

    $(\Rightarrow)$ Since $g$ is totally supported on $\Delta$ and $g^\dag$ is identity on $\Delta^\dag \setminus \Delta$, we have $c \circ g = g^\dag \circ c$. Since $c$ is proper, by the assumption we have $c(g(\ell)) \simeq c(\ell)$. However, $c(g(\ell)) = g^\dag(c(\ell))$, so we establish the equivalence.

    The latter statement follows from that the proper homotopy $g^\dag(c(\ell)) \simeq c(\ell)$ can be taken stationary on $\Delta^\dag \setminus \Delta$, and that the proper homotopy on the segment in $\ell \cap \Delta$ can be extended by identity.
\end{proof}

Now we extend \Cref{prop:image_finiterays} by showing the following. Let $\Map_c(S)$ and $\Map_c(\G)$ be the groups of compactly supported elements, as subgroups of $\PMap(S)$ and $\PMap(\G)$ respectively.

\begin{theorem}\label{thm:image_compactsupp}
    For a surface $S$ that is PHE to a graph $\G$,
    the image of $\Theta:\Map(S) \to \Map(\G)$ \emph{restricted to} $\overline{\Map_c(S)}$ is equal to $\overline{\Map(\G,\mathcal{B}) \cap \Map_c(\G)}$. 
\end{theorem}

\begin{proof}

We first show that an element of the image is contained in $\overline{\Map(\G,\mathcal{B}) \cap \Map_c(\G)}$. Consider some $g \in \overline{\Map_c(S)}$ and a sequence of elements $g_i \in \Map_c(S)$ converging to $g$. By continuity, $\Theta(g_i)$ converges to $\Theta(g)$. Now note each $\Theta(g_i)$ is in both $\Map_c(\G)$ and $\Map(\G,\mathcal{B})$ so $\Theta(g)$ must be in $\overline{\Map(\G,\mathcal{B}) \cap \Map_c(\G)}$.

Let $g \in \overline{\Map(\G,\mathcal{B}) \cap \Map_c(\G)}$. We want to show that $g$ is in the image of $\Theta$. Say $\{g_i\}_{i=1}^\infty$ converges to $g$ for some $g_i \in \Map(\G,\mathcal{B}) \cap \Map_c(\G)$.
According to \Cref{cor:defretraction_restrict}, take an exhaustion $\{\G_i\}$ of $\G$ by finite graphs and an exhaustion $\{S_i\}$ of $S$ by compact subsurfaces such that $\Theta$ restricts to homomorphisms $\Theta_i: \Map(S_i) \to \Map_\G(\G_i)$. Relabeling if needed, we may assume each $g_i$ is totally supported in $\G_i$ for each $i \ge 1$. We further assume, after possibly modifying both exhaustions, that boundary 
components of the $S_i$ intersecting $\partial S$ either 1) agree with compact boundary components of $S$ or 2) alternate between arcs in noncompact boundary components of $S$ and arcs in the interior of $S$. We can further assume each boundary component of the $S_i$ intersects any noncompact component of $\partial S$ in at most one arc.

We now view $\Map_\G(\G_i)$ as $\Map(\G_i,\partial \G_i)$, and lines in $\G_i$ will refer to homotopy classes rel $\partial \G_i$ of intervals with endpoints in $\partial \G_i$. Note we may regard $\Map(\G_i,\partial \G_i)$ as $\PMap(\G_i^\dag)$. By \Cref{prop:image_finiterays}, the image of $\Theta_i$ consists of the PHEs of $\G_i$ preserving the homotopy class of each line and curve coming from the $\partial S_i$.
We claim that each $g_i$ viewed as an element of $\Map(\G_i,\partial\G_i)$ is in the image of $\Theta_i$, after possibly enlarging $S_i$ and $\G_i$. That is, $g_i$ fixes the homotopy classes of curves and lines coming from $\partial S_i$ rel $\partial \G_i$. 

First, $g_i$ fixes the curves in $\mathcal{B}$ as  $g_i$ is in $\Map(\G,\mathcal{B})$. Additional curves come from boundary components of $S_i$ that do not intersect $\partial S$, but we can assume the homotopy classes determined by them are fixed by enlarging $S_i$ and $\G_i$.

Now suppose for the sake of contradiction that there is a line (interval) $\ell$ from $\partial S_i$ rel $\partial \G_i$ that is not fixed by $g_i$. Then by definition $\ell$ can be extended to a loop or a line that is homotopic to the image of a boundary component of $S$ via the deformation retraction. For the former, we already showed $g_i$ fixes every loop up to homotopy, so up to homotopy we can make $g_i$ fix $\ell$ rel $\partial \G_i$, contradicting to the choice of $\ell$. It remains to considering the latter case; extend $\ell$ to a line in $\mathcal{B}$, but as $g_i$ is totally supported on $\G_i$, the extended line is not fixed by $g_i$, contradicting $g_i \in \Map(\G,\mathcal{B})$ by \Cref{lem:line-segment}.

Finally, we have that each $g_i$ has some $f_i \in \Map(S_i)$ such that $\Theta(f_i)=\Theta_i(f_i) = g_i$. Since the $g_i$ converge to $g$ and the image of $\Theta$ is closed by \Cref{prop: closedimage}, we must have $g=\lim_{i \to \infty} g_i = \lim_{i \to \infty} \Theta(f_i)$ is in the image of $\Theta$.
\end{proof}

Recall that for surfaces PHE to a graph, the pure mapping class group agrees with the mapping class group. By Patel--Vlamis \cite{PV2018} and the extension to the surfaces with noncompact boundary by Dickmann \cite[Theorem 6.13]{dickmann2023mapping}, the pure mapping class group of surfaces fits into the following split short exact sequence.

\[
1 \longrightarrow \overline{\Map_c(S)} \longrightarrow \PMap(S) \longrightarrow \Z^{n-1}\longrightarrow 1
\]
where $n$ is the number of ends of the surface accumulated by genus, using the convention that $\infty-1=\infty$. Furthermore, any element of $\PMap(S)$ can be written as a product of an element in $\overline{\Map_c(S)}$ with a product of \textbf{handle shift} maps. A handle shift map has a representative supported in a strip with genus, a strip $\R \times [-\frac{1}{2},\frac{1}{2}]$ modified by attaching genus at each integer point (see top of \Cref{fig:doubleflux}). This representative is formed by shifting the strip by 1 in the $\R$ coordinate and then homotoping this shift map so that the boundary is fixed pointwise. 

The pure mapping class group for infinite graphs was studied by Domat--Hoganson--Kwak \cite{domat2025generating} who showed the existence of the following split exact sequence 

\[
1 \longrightarrow \overline{\Map_c(\G)} \longrightarrow \PMap(\G) \longrightarrow \Z^{n-1}\longrightarrow 1
\]
where $n$ is the number of ends of the graph accumulated by loops.

For surfaces and for graphs, the homomorphisms from the pure mapping class group to $\Z^{n-1}$ are referred to as the \textbf{flux} homomorphisms. The even flux subgroup will refer to the subgroup with even entries in each coordinate.

\begin{figure}[ht!]
    \centering
    \includegraphics[width=0.65\textwidth]{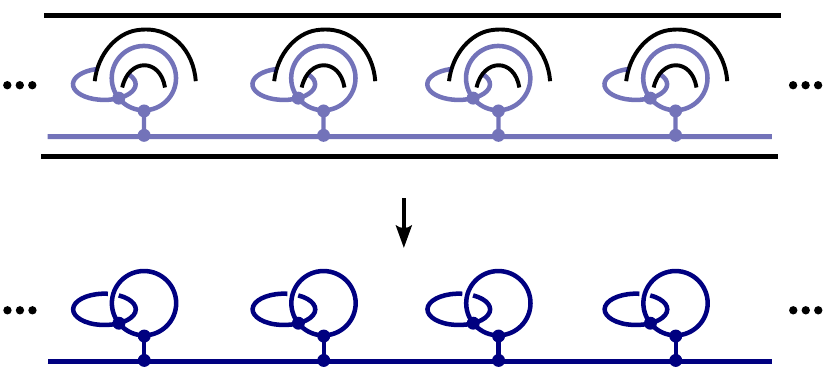}
    \caption{A strip with genus above deformation retracts to the graph below. A handle shift is mapped to a loop shift moving each pair of loops to an adjacent pair.}
    \label{fig:doubleflux}
\end{figure}

\begin{proposition}
\label{prop:doubleflux}
    The map $\Theta$ doubles the flux of a given map, and thus its image is contained in the even flux subgroup of $\PMap(\G)$. 
\end{proposition}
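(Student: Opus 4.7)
The plan is to use the split short exact sequence
\[
1 \to \overline{\PMap_c(S)} \to \PMap(S) \to \Z^{n-1} \to 1
\]
for $\PMap(S) = \Map(S)$ together with the analogous sequence for $\PMap(\G)$, and analyze the flux of $\Theta$ on each factor of the splitting.

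First, I would show that $\Theta\bigl(\overline{\Map_c(S)}\bigr) \subseteq \overline{\Map_c(\G)}$, so that this factor contributes trivial flux in $\G$. Using \Cref{cor:defretraction_restrict}, choose compatible compact exhaustions $\{S_i\}$ of $S$ and $\{\G_i\}$ of $\G$ with $\phi|_{S_i}\colon S_i \to \G_i$ a proper deformation retraction; we may take the proper homotopy inverse $\psi$ to be the inclusion $\G \hookrightarrow S$, so that $\phi\psi = \mathrm{id}_\G$. If $f \in \Map(S)$ is supported in $S_i$, then for $x \in \G \setminus \G_i$ we have $\psi(x) = x \in S \setminus S_i$, so $f(\psi(x)) = \psi(x)$ and therefore $\phi f \psi(x) = \phi\psi(x) = x$. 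Thus $\Theta(f)$ is supported on $\G_i$, giving $\Theta(\Map_c(S)) \subseteq \Map_c(\G)$. Continuity of $\Theta$ (\Cref{prop: closedimage}) then yields the containment on closures, and $\overline{\Map_c(\G)}$ is precisely the kernel of the flux on $\PMap(\G)$.

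Next, I would compute the flux of the image of a handle shift $h_e$ toward an end $e$ of $S$ accumulated by genus. The support of $h_e$ is a strip with a handle attached at each integer point. Under the proper deformation retraction, the strip retracts to a line and each handle retracts to a wedge of two loops, so the total support retracts to an infinite line-graph with a pair of loops attached at each integer, as illustrated in \Cref{fig:doubleflux}. The handle shift translates the strip by one integer, so its image under $\Theta$ shifts the loop-pairs by one position. Since the flux homomorphism of Domat--Hoganson--Kwak counts how many individual loops cross a fixed cut separating the end, this pair-shift has flux exactly $2$ at the end $\phi(e)$ of $\G$ and $0$ at every other end accumulated by loops.

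Combining these two computations yields the result: any $f \in \Map(S)$ decomposes as $f_0 \cdot \prod_e h_e^{k_e}$ with $f_0 \in \overline{\Map_c(S)}$ and finitely many integers $k_e$, so since $\Theta$ and flux are both homomorphisms, the flux of $\Theta(f)$ at each end of $\G$ accumulated by loops is either $2k_e$ or $0$, which is always even. The main technical obstacle is the rigorous verification of the factor of $2$ in the flux of $\Theta(h_e)$, which requires an explicit description of the deformation retraction of a strip with handles onto its figure-eight spine and a careful matching with the flux conventions of \cite{domat2023generating}.
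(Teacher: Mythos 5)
Your proposal follows essentially the same route as the paper: decompose an element via the split exact sequence into a part in $\overline{\PMap_c(S)}$ and a product of handle shifts, check the first part lands in the zero-flux subgroup $\overline{\PMap_c(\G)}$, and show a handle shift maps to a "loop-pair shift" of flux two. Your argument for $\Theta(\overline{\Map_c(S)})\subseteq\overline{\Map_c(\G)}$ via \Cref{cor:defretraction_restrict} and continuity is in fact more detailed than what the paper records (the paper simply asserts this containment). The one point you flag as "the main technical obstacle" --- justifying that the deformation retraction carries the strip-with-genus supporting a given handle shift onto a line with a pair of loops at each integer --- is a genuine gap as written, because the originally chosen PHE $\phi$ need not restrict nicely to an arbitrary properly embedded strip with genus. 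The paper closes this gap not by explicitly describing the retraction for the given $\phi$, but by invoking \Cref{prop:conjugate}: one is free to replace $\phi$ by a different proper deformation retraction adapted to the strip (so that its restriction to the strip is the model retraction of \Cref{fig:doubleflux}), and the resulting homomorphism differs from $\Theta$ only by conjugation by an element of $\PMap(\G)$; since the flux homomorphism takes values in an abelian group, conjugation does not change flux, so the computation for the adapted PHE suffices. Incorporating that reduction would complete your argument.
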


\begin{proof}
    Suppose that the surface has at least two ends accumulated by genus since
    that is the interesting case. Let $f \in \Map(S)$ be a given element, and
    write it as a product of an element in $\overline{\Map_c(S)}$ with disjoint
    handle shifts. Since $\Theta$ maps $\overline{\Map_c(S)}$ to the zero flux
    subgroup $\overline{\Map_c(\G)}$ by \Cref{thm:image_compactsupp}, it suffices to show any handle shift is sent to a map with even flux. See \Cref{fig:doubleflux} for an example. Here, we took $S \to \G$ the proper deformation retraction as in \Cref{cor:defretraction_restrict} so that the \emph{reference domain} for the flux for $S$ descends to that for $\G$ under the proper deformation retraction. In effect, if a handle shift on $S$ has flux $n$, then $\Theta(f)$ has flux $2n$ as each genus yields rank 2 subgraph after the proper deformation retraction. This settles the even flux claim for \Cref{fig:doubleflux}.
    
    We now argue we can reduce to the case in the figure. In general, any handle shift has a representative supported in a properly embedded strip with genus. The map $\Theta$ is defined by a chosen PHE, but we are free to choose a different PHE. By \Cref{prop:conjugate}, the new homomorphism will be conjugate to the original, and it suffices to work with a conjugate since the even flux subgroup is normal. We then choose the new deformation retraction of $S$ to $\G$ so that the restriction to the properly embedded strip with genus is a deformation retraction to a subgraph of $\G$ in a similar fashion to \Cref{fig:doubleflux}. The only difference will be that the subgraph has additional edges leaving the strip with genus to connect to the remainder of the graph.\end{proof}

\bibliography{bib}
\bibliographystyle{alpha}

\end{document}